\numberwithin{equation}{section}
\def\titlerunning#1{\gdef\titrun{#1}}
\def\author#1{\gdef\autrun{\def\and{\unskip, }#1}\gdef\@author{#1}}
\def\keywords#1{\par
\noindent\textbf{Keywords.} #1}
\theoremstyle{plain}
\newtheorem{Thm}{Theorem}[section]
\newtheorem{Lem}[Thm]{Lemma}
\newtheorem{Cor}[Thm]{Corollary}
\newtheorem{Prop}[Thm]{Proposition}
\newtheorem*{Thm*}{Theorem}
\newtheorem*{claim*}{Claim}
\theoremstyle{definition}
\newtheorem*{Def*}{Definition}
\newtheorem*{Cor*}{Corollary}
\newtheorem{Rem}[Thm]{Remark}
\newcommand{\equ}{equation}
\newcommand{\C}{\mathbb{C}}
\newcommand{\N}{\mathbb{N}}
\newcommand{\R}{\mathbb{R}}
\newcommand{\Z}{\mathbb{Z}}
\DeclareMathOperator{\dist}{dist}
\DeclareMathOperator{\supp}{supp}
\DeclareMathOperator{\vol}{vol}
\DeclareMathOperator{\Real}{Re}
\newcommand\eps{\varepsilon}
\let\nhatoksa=\theenumi
\let\nhatoksb=\labelenumi
\let\nhatoksc=\theenumii
\let\nhatoksd=\labelenumii
\newlength{\nhalengtha}
\newlength{\nhalengthb}
\newlength{\nhalengthc}
\newcommand{\resetenum}{
\let\theenumi=\nhatoksa
\let\labelenumi=\nhatoksb
\let\theenumii=\nhatoksc
\let\labelenumii=\nhatoksd
\setlength{\leftmargini}{\nhalengtha}
\setlength{\leftmarginii}{\nhalengthb}
\setlength{\labelwidth}{\nhalengthc}
}
\newcommand\cb{\mathcal{B}}
\newcommand\cd{\mathcal{D}}
\newcommand\ce{\mathcal{E}}
\newcommand\ch{\mathcal{H}}
\newcommand\cl{\mathcal{L}}
\newcommand\cm{\mathcal{M}}
\newcommand\rr{\mathcal{R}}
\def\mbs{\mathbb{S}}
\def\msh{\mathscr{H}}
\def\msn{\mathscr{N}}
\def\id{\text{Id}}
\def\ig{\textit{g}}
\def\ov{\overline}
\def\pa {\partial}
\def\op{\oplus}
\def\ot{\otimes}
\def\De{\Delta}
\def\al{\alpha}
\def\bt{\beta}
\def\de{\delta}
\def\Ga{\Gamma}
\def\ga{\gamma}
\def\lm{\lambda}
\def\om{\omega}
\def\sa{\sigma}
\def\vr{\varepsilon}
\def\va{\varphi}
\def\div{\hbox{div}}
\def\vol{\mathrm{vol}}
\newcommand{\inp}[2]{\left\langle#1,#2\right\rangle}
\def\clf{\C\ell}
\begin{document}

\titlerunning{Curvature effect in the spinorial Yamabe problem}

\title{Curvature effect in the spinorial Yamabe problem on product manifolds}

\author{Thomas Bartsch,\, Tian Xu\footnote{Supported by the National Science Foundation of China (NSFC 11601370)
and the Alexander von Humboldt Foundation of Germany}}

\date{}

\maketitle

\begin{abstract}

Let $(M_1,\ig^{(1)})$, $(M_2,\ig^{(2)})$ be closed Riemannian spin manifolds. We study the existence of solutions of the Spinorial Yamabe problem on the product $M_1\times M_2$ equipped with a family of metrics $\vr^{-2}\ig^{(1)}\op\ig^{(2)}$, $\vr>0$. Via variational methods and blow-up techniques, we prove the existence of solutions which depend only on the factor $M_1$, and which exhibit a spike layer as $\vr\to0$. Moreover, we locate the asymptotic position of the peak points of the solutions in terms of the curvature tensor on $(M_1,\ig^{(1)})$.
\vspace{.5cm}

\noindent{\bf MSC 2010:} Primary: 53C27; Secondary: 35B40, 35Q40, 35R01, 58E30, 58J60
\vspace{.2cm}

\keywords{Spinorial Yamabe equation; strongly indefinite functional; blow-up solutions; spike layer solutions}


\end{abstract}


%
\section{Introduction}

Let $N$ be an $n$-dimensional closed spin manifold, $n\geq2$, with Riemannian metric $\ig$ and a fixed spin structure $\sa:P_{Spin}(N)\to P_{SO}(N)$. Denoted by $\rho: Spin(n)\to End(\mbs_n)$ the spin representation, we write $\mbs(N)=P_{Spin}(N)\times_\rho \mbs_n$ for the spinor bundle over $N$ and $D_\ig^N:C^\infty(N,\mbs(N))\to C^\infty(N,\mbs(N))$ for the (Atiyah-Singer) Dirac operator.

A spinorial analogue of the Yamabe equation can be written as
\begin{\equ}\label{Dirac-c}
D_\ig^N\psi=|\psi|_\ig^{n^*-2}\psi, \quad \text{on } (N,\ig,\sa)
\end{\equ}
where $n^*:=\frac{2n}{n-1}$ --- in fact, Eq. \eqref{Dirac-c} is conformally invariant and is the Euler-Lagrange equation of a variational problem similar to Yamabe's problem (see \cite{Ammann}). In case $\psi\in C^1(N,\mbs(N))$ is a non-trivial solution to \eqref{Dirac-c}, the (generalized) conformal metric $\tilde\ig=|\psi|_\ig^{\frac4{n-1}}\ig$ induces a spinor field $\va$ on $(N,\tilde\ig,\sa)$ such that
\begin{\equ}\label{cmc-equ}
D_{\tilde\ig}^N\va=\va, \quad |\va|_{\tilde\ig}\equiv1
\end{\equ}
on $N\setminus \psi^{-1}(\{0\})$. As an important geometric application, a solution to Eq.  \eqref{cmc-equ} on a two-dimensional manifold $N$ corresponds to the existence of an isometric immersion $(\widetilde N,\ig)\to\R^3$ of the universal covering $\widetilde N$ into the Euclidean 3-space with constant mean curvature
 (see \cite{Ammann, Friedrich} and references therein for more geometric backgrounds).

A series of works of B. Ammann and his group \cite{Ammann, Ammann2003, Ammann2009, ADHH, AGHM, AHA, AHM} have provided a brief picture of how variational method is employed to the study of \eqref{Dirac-c}. From the view point of analysis, as it was pointed out in \cite{Ammann}, standard variational methods do not directly imply the existence of a solution. This is due to the  criticality of the nonlinearity in \eqref{Dirac-c}. Indeed, the exponent $n^*=\frac{2n}{n-1}$ is critical in the sense that the Sobolev embedding involved is precisely the one for which the compactness is lost. Similar to the idea in solving the Yamabe problem, it is possible to find a criterion which recovers compactness. Here the crucial observation is that a spinorial analogue of Aubin's inequality holds (see \cite{AGHM}):
\begin{\equ}\label{spinorial Aubin inequ}
\lm_{min}^+(N,[\ig],\sa):=\inf_{\tilde\ig\in[\ig]}\lm_1^+(\tilde\ig)
\text{Vol}(N,\tilde\ig)^{\frac1n}\leq \lm_{min}^+(S^n,[\ig_{S^n}],\sa_{S^n})=\frac n2 \om_n^{\frac1n}
\end{\equ}
where $[\ig]=\{f^2\ig:f\in C^1(N),\, f\geq 0, \, \supp f=N\}$ is the (generalized) conformal class of $\ig$ and, for each $\tilde\ig\in[\ig]$, $\lm_1^+(\tilde\ig)>0$ denotes the smallest positive eigenvalue of the associated Dirac operator $D_{\tilde\ig}^N$,
$(S^n,\ig_{S^n},\sa_{S^n})$ is the $n$-dimensional sphere equipped with its canonical metric $\ig_{S^n}$ and spin structure $\sa_{S^n}$, and $\om_n$ is the volume of $(S^n,\ig_{S^n})$. The quantity $\lm_{min}^+(N,[\ig],\sa)$ is known as the B\"ar-Hijazi-Lott invariant. One of the main results obtained in \cite{Ammann} shows that if inequality \eqref{spinorial Aubin inequ} is strict then the spinorial Yamabe problem \eqref{Dirac-c} has a nontrivial solution. However, the strict inequality in \eqref{spinorial Aubin inequ} is only verified for some special cases and a general result is still lacking (cf. \cite{ADHH, AHM, Ginoux}).

The purpose of this paper is to establish existence results for \eqref{Dirac-c} on products of compact spin manifolds without knowing whether the strict inequality in \eqref{spinorial Aubin inequ} holds. Moreover, we are also interested in the effect of the curvature tensors in our existence results. In particular, given closed spin manifolds $(M_1,\ig^{(1)},\sa_1)$ and $(M_2,\ig^{(2)},\sa_2)$, with fixed spin structures, let us consider a family of metrics $\ig_\vr$ on the product $N=M_1\times M_2$ defined by $\ig_\vr=\vr^{-2}\ig^{(1)}\op \ig^{(2)}$, $\vr>0$. Let $m_1$ and $m_2$ be the dimensions of $M_1$ and $M_2$ respectively, we will be interested in solutions of the spinorial Yamabe equation on the product manifold $(N,\ig_\vr)$:
\begin{\equ}\label{Dirac-product}
D_{\ig_\vr}^N\phi=|\phi|_{\ig_\vr}^{n^*-2}\phi
\end{\equ}
where $n^*=\frac{2n}{n-1}$ and $n:=\dim N=m_1+m_2$. In the sense of separation of variables, we restrict our study to spinor fields of the form $\phi=\psi\otimes\va\in C^1(N,\mbs(N))$ such that $\psi\in C^1(M_1,\mbs(M_1))$ and $\va\in C^1(M_2,\mbs(M_2))$ are spinor fields on $M_1$ and $M_2$ respectively.

In order to describe our main results, it is useful to recall some notation and definitions in differential geometry, see for instance \cite{Chavel, Jost}. For a closed Riemannian $m$-manifold $(M,\ig)$, let $\exp: TM\to M$ be the exponential map defined on the tangent bundle $TM$ of $M$. Since $M$ is closed, there exists $r>0$ such that $\exp_\xi: B_r(0)\supset \R^m\cong T_\xi M\to B_r(\xi)\subset M$ is a diffeomorphism for any $\xi\in M$. Throughout the paper, $B_r(0)$ will denote the ball in $\R^m$ centered at $0$ with radius $r$ and, for $\xi\in M$, $B_r(\xi)$ will denote the ball in $M$ centered at $\xi$ with respect to the metric $\ig$. For vector fields $X,Y,W,Z$ on $M$, the Riemannian curvature tensor $\rr$ is given by
\[
\rr(X,Y,W,Z)=\ig(\nabla_{X}\nabla_{Y}W,Z)-\ig(\nabla_{Y}\nabla_{X}W,Z)-\ig(\nabla_{[X,Y]}W,Z)
\]
where $\nabla$ is the Riemannian connection. For an orthonormal basis $\{e_1,\dots,e_m\}$ of $T_\xi M$, $\xi\in M$, the Ricci tensor $\text{Ric}: T_\xi M\times T_\xi M\to\R$ is given by the trace of $\rr$, that is
\[
\text{Ric}(X,Y)=\sum_{j=1}^m\rr(e_j,X,Y,e_j)
\]
and the scalar curvature and Gaussian curvature will be denoted by $Scal_\ig(\xi)$ and $K_\ig(\xi)$ respectively. On spin manifolds, since the tangent bundle is embedded in the bundle of Clifford algebra, vector fields have two different actions on spinors, i.e. the Clifford multiplications and the covariant derivatives. Here, to distinguish these two actions on a spinor $\psi$, we denote respectively $\pa_j\cdot_\ig\psi$ the Clifford multiplication of $\pa_j$ and $\nabla_{\pa_j}\psi$ the covariant derivative, $j=1,\dots,m$, with respect to the background metric $\ig$. We also adopt the notation $f_\vr\lesssim g_\vr$ for two $\vr$-dependent functions $f_\vr$ and $g_\vr$, when there exists a constant $C>0$ independent of $\vr$ such that $f_\vr\leq C g_\vr$.

\subsection{An illustrative Example}

We begin by describing an example when $n=3$, where $N=\Sigma\times S^1$ for a closed Riemann surface $\Sigma$ and $S^1$ the standard circle. Let $\ig$ denote the background metric on $\Sigma$ and $d\tau$ denote the standard metric on $S^1$ with total length $2\pi$, then we are concerned with the product metrics $\vr^{-2}\ig\op d\tau$, $\vr>0$.
We also equip $\Sigma$ and $S^1$ with spin structures $\sa_\Sigma$ and $\sa_{S^1}$, respectively.  In this setting, we have $n^*=3$ and Eq. \eqref{Dirac-product} reads as
\begin{\equ}\label{Dirac 3d}
D_{\ig_\vr}^N\phi=|\phi|_{\ig_\vr}\phi,
\quad 
\phi: N\to\mbs(N)
\end{\equ}
where the spinor bundle $\mbs(N)$ is identified as the tensor product
$\mbs(N)=\mbs(\Sigma)\otimes\mbs(S^1)$.
Since the associated Dirac operator on $S^1$ is simply $i\frac{d}{d\tau}$, and since we are looking for a solution of the form $\phi=\psi\otimes\va$, we can take $\va=e^{-i\lm\tau}$ to be an eigen-spinor on $S^1$ (i.e. $i\frac{d}{d\tau}\va=\lm\va$) for some eigenvalue $\lm\neq0$. Then $\phi=\psi\otimes e^{-i\lm\tau}$ is a solution to Eq. \eqref{Dirac 3d} if and only if $\psi:\Sigma\to\mbs(\Sigma)$ is a solution to the following reduced equation (see Section \ref{Variational settings-sec} for a detailed explanation)
\begin{\equ}\label{reduced equ in 3d}
\vr D_{\ig}^{\Sigma}\psi+ \lm\om_\C^{\Sigma}\cdot_{\ig}\psi=
|\psi|_{\ig}\psi, \quad \text{on } \Sigma
\end{\equ}
where $\om_\C^{\Sigma}$ is the chirality operator in the Clifford bundle $\C l(T\Sigma)$ and $``\cdot_\ig"$ denotes the Clifford multiplication.

We also introduce the following equation which corresponds to a limiting equation to problem \eqref{reduced equ in 3d} as $\vr$ goes to zero:
\begin{\equ}\label{limit equ 3d}
D_{\ig_{\R^2}}\psi+ \lm\om_\C\cdot_{\ig_{\R^2}}\psi=
|\psi|\psi, \quad \psi: \R^2\to\mbs(\R^2)\cong\C^2
\end{\equ}
where $\ig_{\R^2}$ denotes the standard Euclidean metric and $\om_\C=i \pa_1\cdot_{\ig_{\R^2}}\pa_2$ is the corresponding chirality operator with $``\cdot_{\ig_{\R^2}}"$ being the Clifford multiplication and $\pa_1=\frac{\pa}{\pa x_1}, \, \pa_2=\frac{\pa}{\pa x_2}$ are the canonical base in the tangent bundle $T\R^2$.

The blow-up profiles (the so-called concentration phenomenon) appearing in solution sequences of Eq. \eqref{reduced equ in 3d} (as $\vr\to0$) are described by rescaled solutions of the above limit equation \eqref{limit equ 3d}. As we will see in Section \ref{limit equ sec}, Eq. \eqref{limit equ 3d} has a variational structure, of strongly indefinite type. Ground state solutions (i.e. solutions with minimal energy) to Eq. \eqref{limit equ 3d} can be obtained via a standard linking arguments, moreover, these solutions decay exponentially at infinity. 

Note that  Eq. \eqref{limit equ 3d} is invariant by translation, we denote $\cb$ the set of ground state solutions of \eqref{limit equ 3d} having maximum modulus at the origin, i.e.  $|\psi(0)|=\max_{x\in\R^2}|\psi(x)|$ for $\psi\in\cb$. As explained in Lemma \ref{exponential}, $\cb$ is compact in $W^{1,q}(\R^2,\mbs(\R^2))$, $q\geq2$.
Now we are ready to state the results for $N=\Sigma\times S^1$:
\begin{Thm}\label{thm-3d}
There exists $\vr_0>0$ such that, for any $\vr\in(0,\vr_0)$, Eq. \eqref{reduced equ in 3d} has a solution $\psi_\vr\in C^1(\Sigma,\mbs(\Sigma))$. Furthermore, there is a maximum point $y_\vr\in \Sigma$ such that
\begin{itemize}
\item[$(1)$] for a constant $c>0$,
\[
|\psi_\vr(\xi)|_\ig\lesssim \exp\Big( -\frac{c}\vr\dist(\xi,y_\vr) \Big),\quad \text{for all } \xi\in \Sigma
\]
where $\dist$ is the distance induced by the metric $\ig$;

\item[$(2)$] as $\vr\to0$, up to a subsequence, the transformed spinor $z_\vr(x)\equiv\psi_\vr\circ\exp_{y_\vr}(\vr x)$ converges uniformly to a ground state solution $z_0\in\cb$ of \eqref{limit equ 3d} and $y_\vr\to y_0$ in $(\Sigma,\ig)$ such that
 \[
\Theta(y_0,z_0)=\max_{(y,\psi)\in \Sigma\times\cb}\Theta(y,\psi),
\]
where $\Theta: \Sigma\times\cb\to\R$ is a functional defined by
\begin{\equ}\label{functional theta}
\aligned
\Theta(y,\psi)&=\frac{K_\ig(y)}{36}\int_{\R^2}|\psi|^3|x|^2dx\\
 &\qquad +\frac{K_\ig(y)}{12}\Real\int_{\R^2}\big( (x_2\nabla_{\pa_1}-x_1\nabla_{\pa_2})\psi, (x_2\pa_1-x_1\pa_2)\cdot_{\ig_{\R^2}}\psi \big) dx.
\endaligned
\end{\equ}
\end{itemize}
\end{Thm}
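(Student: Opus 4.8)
The plan is to combine a variational reduction with a precise expansion of the energy functional in the parameter $\vr$. First I would set up the variational framework for the reduced equation \eqref{reduced equ in 3d}: the associated functional $\cl_\vr$ on an appropriate fractional Sobolev space $H^{1/2}(\Sigma,\mbs(\Sigma))$ is strongly indefinite (the Dirac operator has infinitely many eigenvalues of both signs), so I would use the generalized linking theorem together with a Cerami-type compactness analysis to produce, for each small $\vr$, a critical point $\psi_\vr$ at a min-max level $c_\vr$. Since the nonlinearity is cubic (subcritical in dimension $2$, as $n^*=3$ here but the effective problem is on $\Sigma$), this should be routine once the linking geometry is shown to be uniform in $\vr$. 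Elliptic regularity then upgrades $\psi_\vr$ to $C^1$.

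The heart of the argument is the blow-up analysis as $\vr\to0$. Let $y_\vr$ be a point where $|\psi_\vr|_\ig$ attains its maximum. I would show first that $\|\psi_\vr\|_{L^\infty}$ stays bounded away from $0$ and $\infty$ (the lower bound from the min-max level being uniformly positive, the upper bound from a standard blow-up/rescaling contradiction using that the limit equation \eqref{limit equ 3d} has a positive ground-state energy and no solutions of arbitrarily small energy). Then, working in normal coordinates $\exp_{y_\vr}$, the rescaled spinors $z_\vr(x)=\psi_\vr\circ\exp_{y_\vr}(\vr x)$ satisfy an equation that, as $\vr\to0$, converges coefficient-wise to the limit equation \eqref{limit equ 3d}; a compactness argument (using the exponential decay from Lemma \ref{exponential} to control tails, and local elliptic estimates) forces $z_\vr\to z_0$ uniformly with $z_0\in\cb$ a ground state. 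The exponential decay estimate $(1)$ is then obtained by a comparison-function (barrier) argument: one builds a supersolution of the form $\exp(-\tfrac c\vr\dist(\xi,y_\vr))$ for the modulus $|\psi_\vr|_\ig$, using that away from $y_\vr$ the equation behaves like $\vr D\psi+\lm\om_\C\cdot\psi=o(1)\psi$ and the operator $(\vr D+\lm\om_\C)$ has a spectral gap; this is the standard De Giorgi–Nash–Moser plus maximum-principle scheme adapted to Dirac-type operators via the Schrödinger–Lichnerowicz formula for $|\psi_\vr|^2$.

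For part $(2)$, the location of the limit peak, the plan is to expand the energy $c_\vr=\cl_\vr(\psi_\vr)$ to the relevant order in $\vr$. Plugging a test spinor modeled on $\psi\big(\tfrac1\vr\exp_y^{-1}(\cdot)\big)$ (for $y\in\Sigma$, $\psi\in\cb$) into $\cl_\vr$ and using the Taylor expansion of the metric $\ig$ in normal coordinates about $y$ — where the first nontrivial correction is governed by the curvature, $\ig_{ij}(x)=\delta_{ij}-\tfrac13 \rr_{ikjl}x^kx^l+O(|x|^3)$, equivalently expressed through the Gaussian curvature $K_\ig(y)$ in dimension $2$ — one finds $c_\vr = c_\infty - \vr^2\,\Theta(y,\psi) + o(\vr^2)$, where $c_\infty$ is the ground-state level of \eqref{limit equ 3d} and $\Theta$ is exactly the functional \eqref{functional theta}; the two terms in $\Theta$ arise respectively from the expansion of the volume density $|\psi|^{n^*}\sqrt{\det\ig}$ and from the expansion of the Dirac operator's connection coefficients (the $\nabla$-dependent term). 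Comparing this upper bound for $c_\vr$ with the matching lower bound coming from the convergence $z_\vr\to z_0$ established above shows that $(y_\vr,z_\vr)$ must converge, along a subsequence, to a maximizer of $\Theta$ on $\Sigma\times\cb$. I expect the main obstacle to be the energy expansion: one must carry the Taylor expansion of the Dirac operator on a spin manifold in normal coordinates to second order — including the curvature terms in the connection $1$-form on the spinor bundle and the trivialization of $\mbs(\Sigma)$ along radial geodesics — and then identify the resulting integrals precisely as the two terms in \eqref{functional theta}, which requires care with the Clifford-multiplication bookkeeping and with showing the putative first-order terms vanish.
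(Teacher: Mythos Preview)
Your proposal is correct and follows essentially the same route as the paper: a strongly indefinite variational setup with a Nehari-type reduction, the Bourguignon--Gauduchon trivialization to transplant test spinors, the second-order metric expansion in normal coordinates to produce the $\vr^2\Theta$ term, Lichnerowicz plus comparison for the exponential decay, and matching upper/lower expansions of the critical level to locate the peak. One point you should make explicit: the test spinor $\bar\va_\vr$ is only an \emph{approximate} critical point, so $\cl_\vr(\bar\va_\vr)$ is not automatically an upper bound for the min-max level $c_\vr$; the paper handles this via a quantitative projection lemma (their Corollary~\ref{key corollary}) showing $\max_{t>0}I_\vr(t\bar\va_\vr^+)\le \cl_\vr(\bar\va_\vr)+O(\|\cl_\vr'(\bar\va_\vr)\|_\vr^2)$, which together with $\|\cl_\vr'(\bar\va_\vr)\|_\vr=O(\vr^2)$ keeps the error at $O(\vr^4)$ and preserves the $\vr^2\Theta$ correction.
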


The result presented above implies that the concentrating behavior of a solution to \eqref{Dirac 3d} is affected by the Gaussian curvature $K_\ig$ on $\Sigma$. Since $\Sigma\times\cb$ is compact,  a maximizer of $\Theta$ does exist.

\begin{Rem}\label{Remark1}
	The spin structure of Euclidean spaces is quite explicit and equation \eqref{limit equ 3d} can be rewritten in matrix notation as
	\begin{\equ}\label{limit equ 3d matrix}
		\Big(\ga_1\frac{\pa}{\pa x_1}+\ga_2\frac{\pa}{\pa x_2}\Big)\psi+\lm\ga_3\psi=|\psi|\psi
	\end{\equ}
where $\ga_k$, $k=1,2,3$, are the $2\times2$ Pauli matrices
\[
\ga_1=\begin{pmatrix}
	0&i\\
	i&0
\end{pmatrix}, \quad
\ga_2=\begin{pmatrix}
	0&1\\
	-1&0
\end{pmatrix}, \quad
\ga_3=\begin{pmatrix}
	1&0\\
	0&-1
\end{pmatrix}.
\]
	Passing to polar coordinates in $\R^2$, i.e. $(x_1,x_2)\mapsto (r,\vartheta)$, Eq. \eqref{limit equ 3d matrix} reads as
	 \[
	 \left\{
	 \aligned
	 &e^{-i\vartheta}\Big( i\frac{\pa}{\pa r} +\frac1r\frac{\pa}{\pa\vartheta} \Big)\psi_2=\sqrt{|\psi_1|^2+|\psi_2|^2}\,\psi_1-\lm\psi_1 \\[0.3em]
	 &e^{i\vartheta}\Big( i\frac{\pa}{\pa r} -\frac1r\frac{\pa}{\pa\vartheta} \Big)\psi_1=\sqrt{|\psi_1|^2+|\psi_2|^2}\,\psi_2+\lm\psi_2 
	 \endaligned
	 \right.
	 \]
	 where $\psi=\begin{pmatrix}
	 	\psi_1\\
	 	\psi_2
	 \end{pmatrix}\in\C^2$, and this suggests the following special ansatz (see \cite{CKSCL})
 \begin{\equ}\label{ansatz}
 \psi(r,\vartheta)=\begin{pmatrix}
 	v(r)e^{iS\vartheta}\\[0.3em]
 	iu(r)e^{i(S+1)\vartheta}
 \end{pmatrix}, \quad 
r>0,\ \vartheta\in[0,2\pi)
 \end{\equ}
with $u, v$ real-valued and $S\in\Z$. Plugging such ansatz into the functional $\Theta$, we find that the second term in \eqref{functional theta} vanishes, i.e. we get a simpler expression of $\Theta$ as
\[
\Theta(y,\psi)=\frac{\pi K_\ig(y)}{18}\int_0^\infty\big(u^2+v^2\big)^{\frac32}r^3dr.
\]
This would give a simplified view of
the concentration phenomenon in Theorem \ref{thm-3d}, i.e. $y_0$ must be a global maximum point of the Gaussian curvature $K_\ig$ on $(\Sigma,\ig)$.
Unfortunately, we find no evidence that ground state solutions to the limit equation \eqref{limit equ 3d} should be in the form \eqref{ansatz}. This may lead to conjecture that, up to translations and certain group actions (for instance, the multiplication by $e^{i\om}$ for $\om\in [0,2\pi]$) , the ground state solution $\psi$ to Eq. \eqref{limit equ 3d} is uniquely determined and  takes the form of \eqref{ansatz} (or may be other symmetric ansatz). We add that solutions of the form \eqref{ansatz} to the 2D nonlinear Dirac equation with Kerr-type critical nonlinearity $|\psi|^2\psi$ have been studied in \cite{Borrelli2017, Borrelli2020}. Furthermore, ground state solutions to the spinorial Yamabe equation \eqref{Dirac-c} on $\R^n$ has been recently classified in \cite{BMW}.
 
\end{Rem}

%

\subsection{Full statement of the results}

In order to develop an existence and concentration theory for Eq. \eqref{Dirac-product} on general product spaces $N=M_1\times M_2$, we first introduce explicitly  the spinor bundle $\mbs(N)$ over $N$ in terms of the factors $M_1$ and $M_2$, that is
\[
\mbs(N)=\left\{
\aligned
&(\mbs(M_1)\op\mbs(M_1))\ot \mbs(M_2) &\quad & \text{both } m_1 \text{ and } m_2 \text{ are odd}, \\
&\qquad \mbs(M_1)\ot\mbs(M_2) &\quad & \text{else}.
\endaligned \right.
\]
In this setting, there is no difficulty to understand that a spinor $\phi\in\mbs(N)$ has the form $\phi=\psi\otimes\va$ where $\va\in\mbs(M_2)$ and $\psi=\psi_1\oplus \psi_2\in \mbs(M_1)\oplus\mbs(M_1)$ if both $m_1$ and $m_2$ are odd, and $\psi\in \mbs(M_1)$ if $m_1$ or $m_2$ is even. 

Motivated by the example mentioned previously, we impose the following hypothesis on the second factor $(M_2,\ig^{(2)},\sa_2)$:
\begin{itemize}
\item[$(H)$] there is a solution $\va_\lm$ with constant length $|\va_\lm|_{\ig^{(2)}}=1$ of the Dirac equation $D_{\ig^{(2)}}^{M_2}\va=\lm\va$, for some $\lm\neq0$.
\end{itemize}
Under this hypothesis, $\phi=\psi\otimes\va_\lm$ is a solution of Eq. \eqref{Dirac-product} if and only if $\psi$ is a solution of the following reduced equation
\begin{\equ}\label{reduced equ in nd}
\vr\tilde D_{\ig^{(1)}}^{M_1}\psi+ \lm\om_\C^{M_1}\cdot_{\mbox{\tiny $\ig^{(1)}$}}\psi=
|\psi|_{\ig^{(1)}}^{n^*-2}\psi, \quad \text{on } M_1
\end{\equ}
where 
\[
\tilde D_{\ig^{(1)}}^{M_1}=\left\{
\aligned
&D_{\ig^{(1)}}^{M_1}\op -D_{\ig^{(1)}}^{M_1} &\quad & \text{both } m_1 \text{ and } m_2 \text{ are odd}, \\
&\qquad D_{\ig^{(1)}}^{M_1} &\quad & \text{if } m_1 \text{ is even},
\endaligned
\right.
\]
and
$\om_\C^{M_1}\cdot_{\mbox{\tiny $\ig^{(1)}$}}$ denotes the action of the chirality operator with respect to the metric $\ig^{(1)}$. In case $m_1$ is odd and $m_2$ is even, one may interchange $M_1$ and $M_2$ to get the above equation.

The corresponding limit equation associated to \eqref{reduced equ in nd} is the following one:
\begin{\equ}\label{limit equ nd}
\tilde D_{\ig_{\R^{m_1}}}\psi + \lm \om_\C\cdot_{\ig_{\R^{m_1}}}\psi = |\psi|^{n^*-2}\psi
\quad \text{on } \R^{m_1}
\end{\equ}
where $\ig_{\R^{m_1}}$ is the standard Euclidean metric, 
\[
\tilde D_{\ig_{\R^{m_1}}}=\left\{
\aligned
&D_{\ig_{\R^{m_1}}}\op -D_{\ig_{\R^{m_1}}} &\quad & \text{if both } m_1 \text{ and } m_2 \text{ are odd}, \\
&\qquad D_{\ig_{\R^{m_1}}} & \quad &\text{if } m_1 \text{ is even},
\endaligned
\right.
\]
and $\om_\C=i^{[\frac{m_1+1}2]}\pa_1\cdot_{\ig_{\R^{m_1}}}\cdots\cdot_{\ig_{\R^{m_1}}}\pa_{m_1}$ is the corresponding chirality operator in the Clifford algebra $\C l(T\R^{m_1})$ with $``\cdot_{\ig_{\R^{m_1}}}"$ being the Clifford multiplication; $\pa_1=\frac{\pa}{\pa x_1}, \dots, \pa_{m_1}=\frac{\pa}{\pa x_{m_1}}$ are the canonical base in the tangent bundle $T\R^{m_1}$.

Eq. \eqref{limit equ nd} can be regarded as the Euler-Lagrange equation for the functional 
\[
\cl(\psi)=\frac12\int_{\R^{m_1}}\big( \tilde D_{\ig_{\R^{m_1}}}\psi + \lm \om_\C\cdot_{\ig_{\R^{m_1}}}\psi, \psi \big)dx
-\frac1{n^*}\int_{\R^{m_1}}|\psi|^{n^*}dx
\]
defined for $\psi\in H^{1/2}(\R^{m_1},\mbs(\R^{m_1}))$. Since the spectrum of the linear differential operator $\tilde D_\lm=\tilde D_{\ig_{\R^{m_1}}}+ \lm \om_\C$ is given by
$Spec(\tilde D_\lm)=\big(-\infty,-|\lm|\big]\cup\big[|\lm|,+\infty\big)$, the above functional is strongly indefinite.
Several techniques have been introduced to handle such situations (see for instance \cite{BD2006,BR1979, BJS, Hofer1983, Szulkin-Weth:2010} and references therein). Notice that we have set $n=m_1+m_2$ and $n^*=\frac{2n}{n-1}$, we see the Sobolev
embedding
\[
H^{1/2}(\R^{m_1},\mbs(\R^{m_1}))\hookrightarrow L^{n^*}(\R^{m_1},\mbs(\R^{m_1}))
\]
is locally compact (due to $n^*<m_1^*=\frac{2m_1}{m_1-1}$). This means that Palais-Smale sequences for the functional $\cl$ possess local strong convergence in $L^{n^*}$ and thus in $H^{1/2}$. In the framework of Concentration-Compactness theory, one obtains the existence of ground state solutions to Eq. \eqref{limit equ nd} via standard variational arguments.

For ease of notation, we still denote $\cb$ the set of all ground state solutions of \eqref{limit equ nd} satisfying $|\psi(0)|=\max_{x\in\R^{m_1}}|\psi(x)|$ for $\psi\in\cb$. We know from Lemma \ref{exponential} that $\cb$ is compact in $W^{1,q}(\R^{m_1},\mbs(\R^{m_1}))$, $q\geq2$.
Then our main result reads as

\begin{Thm}\label{thm main}
Assume $\dim M_1=m_1\geq2$ and $(M_2,\ig^{(2)},\sa_2)$ satisfies hypothesis $(H)$. Let $m_1$ be even when $n=m_1+m_2$ is odd. Then there exists $\vr_0>0$ such that, for any $\vr\in(0,\vr_0)$, Eq. \eqref{reduced equ in nd} has a solution of the form $\psi_\vr\in C^1(M_1,\mbs(M_1))$. Furthermore, there is a maximum point $y_\vr\in M_1$ such that
\begin{itemize}
\item[$(1)$] for a constant $c>0$,
\[
|\psi_\vr(\xi)|_{\ig^{(1)}}
\lesssim \exp\Big( -\frac{c}{\vr}\dist^{(1)}(\xi,y_\vr) \Big),\quad \text{for all } \xi\in M_1
\]
where $\dist^{(1)}$ is the distance induced by the metric $\ig^{(1)}$;

\item[$(2)$] as $\vr\to0$, up to a subsequence, the transformed spinor $z_\vr(x)\equiv\psi_\ell\circ\exp_{y_\vr}(\vr x)$ converges uniformly to a ground state solution $z_0\in\cb$ of \eqref{limit equ nd} and $y_\vr\to y_0$ in $(M_1,\ig^{(1)})$ such that
\[
\Theta(y_0,z_0)=\max_{(y,\psi)\in M_1\times\cb}\Theta(y,\psi),
\]
where $\Theta: M_1\times\cb\to\R$ is a functional defined by
\[
\aligned
\Theta(y,\psi)&=\frac1{12n}\int_{\R^{m_1}}\text{Ric}_y(x,x)|\psi|^{\frac{2n}{n-1}}dx \\
&\qquad +\frac1{12}\sum_{j,k}\Real\int_{\R^{m_1}}\rr_y(e_j,x,x,e_k)
(\nabla_{\pa_k}\psi,\pa_j\cdot_{\ig_{\R^{m_1}}}\psi)dx.
\endaligned
\]
\end{itemize}
\end{Thm}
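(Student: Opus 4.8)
\textit{Proof plan.}
The plan is to obtain the solutions $\psi_\vr$ as critical points of the strongly indefinite energy functional attached to \eqref{reduced equ in nd}, and then to read off the spike behaviour and the location of the peaks from a blow-up analysis combined with a second order expansion of the energy in geodesic normal coordinates. \emph{Step 1 (variational framework and existence).} On the Hilbert space $\mse:=H^{1/2}$ of the relevant spinor bundle over $M_1$ (which is $\mbs(M_1)\op\mbs(M_1)$ when $m_1$ and $m_2$ are both odd, and $\mbs(M_1)$ otherwise) I would consider
\[
\cl_\vr(\psi)=\tfrac12\int_{M_1}\big(\vr\tilde D_{\ig^{(1)}}^{M_1}\psi+\lm\,\om_\C^{M_1}\!\cdot_{\mbox{\tiny $\ig^{(1)}$}}\psi,\,\psi\big)\,d\vol_{\ig^{(1)}}-\tfrac1{n^*}\int_{M_1}|\psi|_{\ig^{(1)}}^{n^*}\,d\vol_{\ig^{(1)}}.
\]
Since $\tilde D_{\ig^{(1)}}^{M_1}$ and the Clifford action of $\om_\C^{M_1}$ anticommute, $(\vr\tilde D_{\ig^{(1)}}^{M_1}+\lm\om_\C^{M_1}\cdot_{\ig^{(1)}})^2=\vr^2(\tilde D_{\ig^{(1)}}^{M_1})^2+\lm^2\geq\lm^2$; hence the linear part has spectrum missing $(-|\lm|,|\lm|)$, the splitting $\mse=\mse^+\op\mse^-$ carries a spectral gap that does not collapse as $\vr\to0$, and $\cl_\vr$ has a uniform linking geometry in the sense of \cite{BD2006,Hofer1983,Szulkin-Weth:2010}. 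Because $n=m_1+m_2>m_1$ we have $n^*<m_1^*=\tfrac{2m_1}{m_1-1}$, so $\mse\hookrightarrow L^{n^*}(M_1)$ is compact and $\cl_\vr$ satisfies the Palais--Smale condition; the linking minimax value $c_\vr>0$ is therefore attained at a critical point $\psi_\vr$, which by elliptic bootstrapping for the Dirac equation lies in $C^1(M_1,\mbs(M_1))$. Transplanting a rescaled ground state of \eqref{limit equ nd} as a competitor already gives $\limsup_{\vr\to0}\vr^{-m_1}c_\vr\leq c_\infty$, where $c_\infty>0$ is the least energy among nontrivial solutions of \eqref{limit equ nd}.

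\emph{Step 2 (blow-up, limit profile and exponential decay).} Let $y_\vr\in M_1$ realise $\max_{M_1}|\psi_\vr|_{\ig^{(1)}}$. Testing \eqref{reduced equ in nd} and using that $\vr\tilde D_{\ig^{(1)}}^{M_1}+\lm\om_\C^{M_1}\cdot_{\ig^{(1)}}$ is invertible with inverse of norm $\leq|\lm|^{-1}$ yields $\|\psi_\vr\|_{L^2}\leq|\lm|^{-1}\|\psi_\vr\|_{L^\infty}^{n^*-2}\|\psi_\vr\|_{L^2}$, hence $|\psi_\vr(y_\vr)|=\|\psi_\vr\|_{L^\infty}\geq|\lm|^{1/(n^*-2)}>0$. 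I would then set $z_\vr(x):=\psi_\vr(\exp_{y_\vr}(\vr x))$ on $B_{r/\vr}(0)\subset\R^{m_1}$: in these rescaled normal coordinates $\ig^{(1)}$ tends to $\ig_{\R^{m_1}}$ in $C^2_\loc$ and $z_\vr$ solves an equation converging to \eqref{limit equ nd}, so uniform $L^\infty$ bounds together with the Dirac bootstrap give $z_\vr\to z_0$ in $C^1_\loc(\R^{m_1})$ with $z_0\neq0$ a solution of \eqref{limit equ nd}. Comparing energies along this convergence, $\cl(z_0)\leq\liminf_{\vr\to0}\vr^{-m_1}c_\vr$; combined with $\limsup_{\vr\to0}\vr^{-m_1}c_\vr\leq c_\infty$ from Step 1 and with $\cl(z_0)\geq c_\infty$ (as $z_0$ is a nontrivial solution) these become equalities, so $z_0$ is a ground state, $\vr^{-m_1}c_\vr\to c_\infty$ and no energy is lost, and after translating so that $|z_0(0)|=\max|z_0|$ we take $z_0\in\cb$. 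Finally, applying the Lichnerowicz formula to $\psi_\vr$ and Kato's inequality, $|\psi_\vr|_{\ig^{(1)}}$ satisfies weakly $-\vr^2\Delta_{\ig^{(1)}}|\psi_\vr|+\big(\lm^2-C|\psi_\vr|^{n^*-2}-C\vr^2\big)|\psi_\vr|\leq0$; by the concentration the bracket is $\geq\lm^2/2$ outside $B_{R\vr}(y_\vr)$ for $R$ large, and comparison with the barrier $\xi\mapsto e^{-c\,\dist^{(1)}(\xi,y_\vr)/\vr}$ gives item $(1)$. Together with the uniform exponential decay of the elements of $\cb$ (Lemma \ref{exponential}) this upgrades $z_\vr\to z_0$ to uniform convergence and to convergence in $W^{1,q}(\R^{m_1})$ for every $q\geq2$.

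\emph{Step 3 (energy expansion) and Step 4 (location of the peaks).} Writing the integrals defining $\cl_\vr(\psi_\vr)$ in $\ig^{(1)}$-normal coordinates at $y_\vr$ and rescaling by $x\mapsto\vr x$, I would insert the Taylor expansion $\sqrt{\det g(\vr x)}=1-\tfrac16\text{Ric}_{y_\vr}(\vr x,\vr x)+O(\vr^3|x|^3)$ of the volume density together with the corresponding expansions of the vielbein and of the spin connection into the Dirac and chirality terms. Collecting all contributions and simplifying the quadratic part with the limit equation $\tilde D_{\ig_{\R^{m_1}}}z+\lm\om_\C\cdot z=|z|^{n^*-2}z$ --- the potential order-$\vr$ term dropping out after integration by a Bianchi-type identity and the self-adjointness of the operators --- one is left, using the exponential decay and the $W^{1,q}$-convergence of Step 2 to discard the tails and pass to the limit in the remainders, with
\[
\vr^{-m_1}\cl_\vr(\psi_\vr)=c_\infty-\vr^2\,\Theta(y_\vr,z_\vr)+o(\vr^2),
\]
where the $\text{Ric}$-summand of $\Theta$ originates from the volume density and the $\rr_{y_\vr}(e_j,x,x,e_k)(\nabla_{\pa_k}\psi,\pa_j\cdot_{\ig_{\R^{m_1}}}\psi)$-summand from the metric dependence of $\tilde D_{\ig^{(1)}}^{M_1}$. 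Running the same computation with an arbitrary $\psi\in\cb$ transplanted by $\exp_y$, rescaled and projected onto the generalized Nehari set gives, uniformly on the compact set $M_1\times\cb$, a competitor with $c_\vr\leq\vr^{m_1}\big(c_\infty-\vr^2\Theta(y,\psi)+o(\vr^2)\big)$. Comparing this with the identity $c_\vr=\cl_\vr(\psi_\vr)$ forces $\Theta(y_\vr,z_\vr)\geq\Theta(y,\psi)-o(1)$ for every $(y,\psi)\in M_1\times\cb$; since $y_\vr\to y_0$, $z_\vr\to z_0$ and $\Theta$ is continuous on $M_1\times W^{1,q}$, letting $\vr\to0$ yields $\Theta(y_0,z_0)=\max_{M_1\times\cb}\Theta$, the maximum being attained because $M_1\times\cb$ is compact.

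\emph{The main obstacle.} The heart of the matter is Step 3: deriving the normal-coordinate expansion of the modified Dirac operator $\vr\tilde D_{\ig^{(1)}}^{M_1}+\lm\om_\C^{M_1}\cdot_{\ig^{(1)}}$ to second order --- in particular tracking how the metric dependence of the chirality operator interacts with the curvature corrections, and checking that the first order terms really cancel --- and, above all, proving that every remainder is genuinely $o(\vr^2)$ after rescaling. This is not automatic: it relies on the uniform exponential decay and the $W^{1,q}$-compactness of ground states from Step 2 to control the curvature-weighted quadratic forms over the far region $\{|x|\gtrsim\vr^{-1/2}\}$ and to pass to the limit in the weighted integrals, and it also requires that the linking minimax value be genuinely of least-energy type, so that the transplanted competitors bound $c_\vr$ from above with the correct sign.
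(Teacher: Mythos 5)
Your plan is correct and follows essentially the same route as the paper: a strongly indefinite variational framework with a generalized Nehari/linking reduction, transplanted rescaled ground states as competitors for the upper bound, a blow-up and exponential-decay analysis for the lower bound, and matching the two second-order expansions to locate $y_0$ via the maximality of $\Theta$. The "main obstacle" you identify is precisely where the paper invests its effort (Corollary \ref{key corollary}, which shows that projecting an $O(\vr^2)$-almost-critical test spinor onto the Nehari--Pankov set perturbs the energy only by $O(\vr^4)$, and the global barrier construction in Lemma \ref{exponentially decay} replacing the non-smooth function $\dist(\cdot,y_\vr)$), so your outline is faithful to the actual proof.
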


The case $m_1=m_2=1$, which corresponds to $N=S^1\times S^1$, is rather simple and does not reflect the effect of curvature since the problem is reduced to an ordinary differential equation on the first circle (see \cite{SX2020}).

\begin{Rem}
\begin{itemize}
\item[a)] The hypothesis $(H)$ is rather harmless. This is satisfied by a large class of manifolds including the circle $S^1$, the $m$-spheres and many conformally flat manifolds (see for instance \cite{AHM, ADHH}).

\item[b)] Theorem \ref{thm main} does not treat directly the case of $m_1$ odd and $m_2$ even. As the statements show, the concentration phenomenon will be obtained on the even dimensional factor. This is mainly due to the specific spin-representation (see \eqref{product spinor representation} below) when $n=m_1+m_2$ is odd.

\item[c)] Analogously to Remark \ref{Remark1}, if we have additionally a symmetric characterization of ground state solutions to Eq. \eqref{limit equ nd} then the functional $\Theta$ can be intensively simplified. In higher dimensions, it is still not very clear how to give a general symmetric characterization of the solutions (while the Laplacian commutes with rotations, it is not the case for Dirac operator). However in 3D Dirac equations, it is known that there is one candidate symmetric ansatz (see in \cite{Sere2, Wakano}).
\end{itemize}
\end{Rem}

Finally we would like to compare problem \eqref{reduced equ in nd} with its counterpart of  elliptic type:
\begin{\equ}\label{single}
-\vr^2\De_g u + u = u^{p-1}, \quad u>0
\end{\equ}
on a smooth closed Riemannian manifold $(M,\ig)$, with $\dim M=m\geq3$ and $p\in(2,\frac{2m}{m-2})$.
An interesting observation is that our results about \eqref{reduced equ in nd} depend on both curvature tensors and the ground state solutions of the limit equations. In fact, the point $y_0\in M_1$ in our result locates the blow-up (or concentration) while the ground state solution $z_0\in\cb$ gives the profile of the blowing-up bubbles. Unlike the well-known results about \eqref{single} in \cite{BP, DMP, MP} etc. where only scalar curvature enters. The functional $\Theta$ in our theorems appear to be complicated and  mysterious, in particular, the second term in $\Theta$ is not very clear to us. This is because there is very little information available for the ground state solutions of the strongly indefinite problems \eqref{limit equ 3d} and \eqref{limit equ nd}. Since the limit equation of \eqref{single} on the Euclidean space $\R^m$ is explicitly understood, for which there exists a unique positive solution (up to translations) and is radially symmetric, the results for positive solutions of \eqref{single} only depends on geometric quantities. It would be very interesting to characterize those ground state solutions for Dirac equations \eqref{limit equ 3d} and \eqref{limit equ nd}, and then one may have a better understanding for the functional $\Theta$.

\subsection{Outline of the paper}

The proof of our results will be carried out in several steps. First, in Section \ref{sec: bundle setting}, we recall some preliminaries and also fix our notation. In particular, we will formally provide the spinor bundles and Dirac operators on product spaces. In Section \ref{Variational settings-sec}, we explicitly introduce Eq. \eqref{reduced equ in nd} as the reduced equation of Spinorial Yamabe equation \eqref{Dirac-product}, and set up the associated variational framework. The existence result is standard since the nonlinearity in Eq. \eqref{reduced equ in nd} has subcritical growth (in the sense of Sobolev embedding). In this case the concentration phenomenon manifests itself in the difficulty of locating the behavior of the solutions when the parameter $\vr$ is small.
Here, the key point lies in Corollary \ref{key corollary} which provides a refined upper bound estimate for the critical levels in our variational framework. In fact, Corollary \ref{key corollary} makes it possible to compute a asymptotic expansion of the critical levels in terms of $\vr$. Section \ref{Proof of the main results sec} is devoted to give the complete proof of our main results. In this section, we first collect basic properties of the ground state solutions of the limit equation \eqref{limit equ nd} since they perform as bubbles in the concentration phenomenon. The analysis of the concentration phenomenon is quite delicate and it requires a careful asymptotic expansion of the critical levels. With the help of  a well adopted spinor bundle trivialization (the so-called Bourguignon-Gauduchon trivialization) and our Corollary \ref{key corollary}, we establish the critical level expansion in terms of $\vr$ in which the effect of curvature tensors enters.


\section{Spinor bundles and Dirac operators on product spaces}\label{sec: bundle setting}
In this section, we collect some basic notations from spin geometry. Instructional material can be found in \cite[Chapter I. 5 and II. 7]{Lawson}.

\subsection{Algebraic preliminaries}
Let us denote by $\{e_1,\dots, e_m\}$ the canonical basis of an oriented Euclidean space $V$ and by $\clf(V)$ the complex Clifford algebra of $V$ with its multiplication being denoted by ``$\cdot$". In case the dimension $m$ of $V$ is even, i.e. $m=2k$, the Clifford algebra is isomorphic to the algebra $\cm(2^k;\C)$ of all complex $2^k\times2^k$ matrices. Hence $\clf(V)$ has precisely one irreducible module, the spinor module $\mbs_{2k}$ with $\dim_\C\mbs_{2k}=2^k$. When restricting this representation to the even subalgebra $\clf^0(V)$, the module $\mbs_{2k}$ splits into two irreducible unitary representations $\mbs_{2k}=\mbs_{2k}^+\op\mbs_{2k}^-$, given by the eigensubspaces of the endomorphism $\om_\C^V:=i^k e_1\cdots e_m$ to the eigenvalues $\pm1$. In the context, we will call $\om_\C^V$ the ``chirality operator" or the ``complex volume element".

In case $m$ is odd, that is $m=2k+1$, the Clifford algebra $\clf(V)$ is isomorphic to $\cm(2^k;\C)\op\cm(2^k;\C)$. And thus, we obtain two $2^k$-dimensional irreducible spinor modules $\mbs^0_{2k+1}$ and $\mbs^1_{2k+1}$ if we project the Clifford multiplication onto the first and second component respectively. Similar to the splitting in even dimensions, the two modules $\mbs^0_{2k+1}$ and $\mbs^1_{2k+1}$ can be distinguished by the chirality operator $\om_\C^V:=i^{k+1} e_1\cdots e_m$ in the sense that on $\mbs^j_{2k+1}$ it acts as $(-1)^j$, $j=0,1$. It will cause no confusion if we simply identify $\mbs^0_{2k+1}$ and $\mbs^1_{2k+1}$ as the same vector space, that is $\mbs_{2k+1}=\mbs^0_{2k+1}=\mbs^1_{2k+1}$, and equip them with Clifford multiplications of opposite signs.

Let $V$ and $W$ be two oriented Euclidean spaces with $\dim V=m_1$ and $\dim W=m_2$. We denote $\clf(V)$ and $\clf(W)$ the associated Clifford algebras of $V$ and $W$ respectively. By an abuse of notation, we use the same symbol ``$\cdot$" for the Clifford multiplication in $\clf(V)$, $\clf(W)$ and in their representations.
As it is well known, the Clifford algebra of the sum of two vector spaces is the
$\Z_2$-graded tensor product of the Clifford algebras of the two summands,
that is $\clf(V\op W)=\clf(V)\widehat\ot\clf(W)$ (see \cite{Lawson}).
Therefore, we can construct the spinor module of $V\op W$ from those of
$V$ and $W$ as
\begin{\equ}\label{spinor modules}
\mbs_{m_1+m_2}=\left\{
\aligned
&(\mbs_{m_1}\op\mbs_{m_1})\ot \mbs_{m_2} &\quad & \text{if both } m_1 \text{ and } m_2 \text{ are odd}, \\
&\qquad \mbs_{m_1}\ot\mbs_{m_2} &\quad & \text{if }  m_1 \text{ is even}.
\endaligned \right.
\end{\equ}
Here, as mentioned before, we simply excluded the case where $m_1$ is odd and $m_2$ is even because $V$ and $W$ can be interchanged. As for the representation of Clifford multiplications on $\mbs_{m_1+m_2}$, let $\xi\in V$, $\zeta\in W$, $\va\in\mbs_{m_2}$
and $\psi=\psi_1\op\psi_2\in \mbs_{m_1}\op\mbs_{m_1}$ if both $m_1$ and $m_2$ are odd, and $\psi\in\mbs_{m_1}$ if $m_1$ is even. We set
\begin{\equ}\label{clifford multiplication}
(\xi\op\zeta)\cdot (\psi\ot\va)=(\xi\cdot\psi)\ot\va+(\om_\C^V\cdot\psi)\ot (\zeta\cdot\va),
\end{\equ}
where, in case both $m_1$ and $m_2$ odd, $\xi\cdot\psi=(\xi\cdot\psi_1)\op(-\xi\cdot\psi_2)$
and $\om_\C^V\cdot\psi=i(\psi_2\op-\psi_1)$. With this notation, one easily checks that
\[
(\xi\op\zeta)\cdot(\xi\op\zeta)\cdot (\psi\ot\va)=-|\xi\op\zeta|^2(\psi\ot\va).
\]
Thus $\mbs_{m_1+m_2}$ is a nontrivial $\clf(V\op W)$-module of (complex) dimension
$2^{[\frac{m_1+m_2}{2}]}$. Moreover, in case $m_1+m_2$ is even, the splitting of
$\mbs_{m_1+m_2}$ into half-spinor modules is given by
\[
\mbs_{m_1+m_2}^+=\big\{ (\psi\op\psi)\ot\va:\, \psi\in\mbs_{m_1},\ \va\in\mbs_{m_2}\big\},
\]
\[
\mbs_{m_1+m_2}^-=\big\{ (\psi\op-\psi)\ot\va:\, \psi\in\mbs_{m_1},\ \va\in\mbs_{m_2}\big\}
\]
for both $m_1$ and $m_2$ odd and
\[
\mbs_{m_1+m_2}^+=(\mbs_{m_1}^+\ot\mbs_{m_2}^+)\op(\mbs_{m_1}^-\ot\mbs_{m_2}^-),
\]
\[
\mbs_{m_1+m_2}^-=(\mbs_{m_1}^+\ot\mbs_{m_2}^-)\op(\mbs_{m_1}^-\ot\mbs_{m_2}^+)
\]
for both $m_1$ and $m_2$ even.

Next, let us turn to the manifold setting.
Let $(M_1,\ig^{(1)})$ and $(M_2,\ig^{(2)})$ be two oriented Riemannian manifolds of dimensions $m_1$ and $m_2$, respectively. We henceforth suppose that both manifolds are
equipped with a fixed spin structure (for details about spin structures, we refer to
\cite{Friedrich, Lawson} or to the well written self-contained introduction \cite{Hij99}).
This induces a unique spin structure on the
Riemannian product $(N=M_1\times M_2,\ig=\ig^{(1)}\op\ig^{(2)})$.
Indeed, let $\pi_{M_1}$ and $\pi_{M_2}$ denote the projections
on $M_1$ and $M_2$, the tangent bundle of $N$ can be decomposed as
\[
TN=\pi_{M_1}^*TM_1\op \pi_{M_2}^*TM_2.
\]
For simplicity, we omit the projections and write $TN=TM_1\op TM_2$.
And such splitting is orthogonal with respect to $\ig$. Hence
the frame bundle of $N$ can be reduced to a $SO(m_1)\times SO(m_2)$-principal
bundle, and this is isomorphic to the product of the frame bundles over $M_1$ and $M_2$.

\subsection{The Dirac operator}
Fix the spin structures $\sa_{M_1}$ and $\sa_{M_2}$,
let us consider the Clifford bundles (with Clifford multiplications)
$(\C l(TM_1),\cdot_{\mbox{\tiny $\ig^{(1)}$}})$, $(\C l(TM_2),\cdot_{\mbox{\tiny $\ig^{(2)}$}})$ and spinor bundles
$\mbs(M_1)$, $\mbs(M_2)$ over $M_1$ and $M_2$ respectively.
From the previous considerations in the algebraic settings, we know for the spinor
bundles that
\[
\mbs(N)=\left\{
\aligned
&(\mbs(M_1)\op\mbs(M_1))\ot \mbs(M_2) &\quad & \text{if both } m_1 \text{ and } m_2 \text{ are odd}, \\
&\qquad \mbs(M_1)\ot\mbs(M_2) &\quad & \text{if } m_1 \text{ is even}.
\endaligned \right.
\]
For $X\in TM_1$, $Y\in TM_2$, $\va\in\Ga(\mbs(M_2))$ and
$\psi=\psi_1\op\psi_2\in \Ga(\mbs(M_1)\op\mbs(M_1))$ for both
$m_1$ and $m_2$ odd and $\psi\in\Ga(\mbs(M_1))$ for $m_1$ even, we have
\begin{\equ}\label{product spinor representation}
(X\op Y)\cdot_{\mbox{\tiny $\ig$}} (\psi\ot\va)=(X\cdot_{\mbox{\tiny $\ig^{(1)}$}}\psi)\ot\va+(\om_\C^{M_1}\cdot_{\mbox{\tiny $\ig^{(1)}$}}\psi)\ot(Y\cdot_{\mbox{\tiny $\ig^{(2)}$}}\va)
\end{\equ}
where in case $m_1$ and $m_2$ odd we set $X\cdot_{\mbox{\tiny $\ig^{(1)}$}}\psi=(X\cdot_{\mbox{\tiny $\ig^{(1)}$}}\psi_1)\op(-X\cdot_{\mbox{\tiny $\ig^{(1)}$}}\psi_2)$ and $\om_\C^{M_1}\cdot_{\mbox{\tiny $\ig^{(1)}$}}\psi=i(\psi_2\op-\psi_1)$.

Let $\nabla^{\mbs(M_1)}$ and $\nabla^{\mbs(M_2)}$ be the (lifted)
Levi-Civita connections on $\mbs(M_1)$ and $\mbs(M_2)$. By
\[
\nabla^{\mbs(M_1)\ot\mbs(M_2)}=\nabla^{\mbs(M_1)}\ot\id_{\mbs(M_2)}
+\id_{\mbs(M_1)}\ot \nabla^{\mbs(M_2)}
\]
we mean the tensor product connection on $\mbs(M_1)\ot\mbs(M_2)$.
If we take $\{X_1,\dots, X_{m_1}\}$ a locally positively oriented orthonormal
frame of $(M_1,\ig^{(1)})$, then the Dirac operator on $M_1$ is (locally) defined by
$D_{\ig^{(1)}}^{M_1}=\sum_{j=1}^{m_1}X_j\cdot_{\mbox{\tiny $\ig^{(1)}$}}\nabla^{\mbs(M_1)}_{X_j}$. Similarly,
if we take $\{Y_1,\dots,Y_{m_2}\}$ a locally positively oriented
orthonormal frame of $(M_2,\ig^{(2)})$, we have
$D_{\ig^{(2)}}^{M_2}=\sum_{j=1}^{m_2}Y_j\cdot_{\mbox{\tiny $\ig^{(2)}$}}\nabla^{\mbs(M_2)}_{Y_j}$.
Evidently, in the product setting,
$\{X_1\op0,\dots, X_{m_1}\op0,0\op Y_1,\dots,0\op Y_{m_2}\}$
 is a local section of the frame bundle of $N$.
Hence formula \eqref{product spinor representation} yields
\[
\aligned
D_{\ig}^N&:=\sum_{j=1}^{m_1}(X_j\op0)\cdot_{\mbox{\tiny $\ig^{(1)}$}}
\nabla^{\mbs(M_1)\ot\mbs(M_2)}_{X_j\op0}+\sum_{j=1}^{m_2}
(0\op Y_j)\cdot_{\mbox{\tiny $\ig^{(2)}$}}
\nabla^{\mbs(M_1)\ot\mbs(M_2)}_{0\op Y_j}  \\
&= \tilde D_{\ig^{(1)}}^{M_1} \ot \id_{\mbs(M_2)}+ (\om_\C^{M_1}\cdot_{\mbox{\tiny $\ig^{(1)}$}}\id_{\mbs(M_1)})\ot D_{\ig^{(2)}}^{M_2}
\endaligned
\]
which defines the Dirac operator on $N=M_1\times M_2$, where
$\tilde D_{\ig^{(1)}}^{M_1}= D_{\ig^{(1)}}^{M_1}\op -D_{\ig^{(1)}}^{M_1}$ if both $m_1$ and $m_2$ are odd
and $\tilde D_{\ig^{(1)}}^{M_1}=D_{\ig^{(1)}}^{M_1}$ if $m_1$ is even.

For the case $m_1+m_2$ even, we have the decomposition
$\mbs(N)=\mbs(N)^+\op\mbs(N)^-$ and, moreover, when restrict $D_{\ig}^N$
on those half-spinor spaces we get
$D_{\ig}^N:\Ga(\mbs(N)^\pm)\to\Ga(\mbs(N)^\mp)$.

\section{Variational setting}\label{Variational settings-sec}
In what follows, we always consider the case $N=M_1\times M_2$, $m_1=\dim M_1\geq2$ and $m_2=\dim M_2\geq1$.
In order to give unified expressions in odd and even cases,
we will write simply $\mbs(N)=\tilde\mbs(M_1)\ot\mbs(M_2)$ with
\[
\tilde\mbs(M_1)=\left\{
\aligned
&\mbs(M_1)\op\mbs(M_1) &\quad & \text{if } m_1 \text{ is odd}, \\
&\qquad \mbs(M_1) &\quad & \text{if } m_1 \text{ is even}.
\endaligned \right.
\]
and denote $\psi\ot\va$ for a spinor field in $\mbs(N)$
when no confusion can arise.

Let us consider the warped metric $\ig_\vr:=\vr^{-2}\ig^{(1)}\op\ig^{(2)}$, where $\vr>0$ is a parameter.
According to the discussions in the previous section, we know for the Dirac operators that
\[
D_{\ig_\vr}^N= \tilde D_{\vr^{-2}\ig^{(1)}}^{M_1} \ot \id_{\mbs(M_2)}+
(\ov{\om_\C^{M_1}}\cdot_{\mbox{\tiny $\vr^{-2}\ig^{(1)}$}}\id_{\tilde \mbs(M_1)})\ot D_{\ig^{(2)}}^{M_2}
\]
where $\ov{\om_\C^{M_1}}$ denotes the chirality operator and "$\cdot_{\mbox{\tiny $\vr^{-2}\ig^{(1)}$}}$" denotes the Clifford multiplication on $M_1$ associated to the conformal metric $\vr^{-2}\ig^{(1)}$ respectively.

Turning to the nonlinear problems, let us denote $|\cdot|_{\vr^{-2}\ig^{(1)}}$ and $|\cdot|_{\ig^{(2)}}$ the natural hermitian metrics on $\mbs(M_1)$ and $\mbs(M_2)$ respectively and $|\cdot|_{\ig_\vr}$ the induced metric on $\mbs(N)$. Recall the notation $n=m_1+m_2$ and $n^*=\frac{2n}{n-1}$, we can expand the spinorial Yamabe equation
\[
D_{\ig_\vr}^N\phi=|\phi|_{\ig_\vr}^{n^*-2}\phi, \quad \phi=\bar\psi\ot\va\in\mbs(N)
\]
into
\begin{\equ}\label{equ0}
(\tilde D_{\vr^{-2}\ig^{(1)}}^{M_1}\bar\psi )\ot \va+
(\ov{\om_\C^{M_1}}\cdot_{\mbox{\tiny $\vr^{-2}\ig^{(1)}$}}\bar\psi)\ot (D_{\ig^{(2)}}^{M_2}\va)=
\big( |\bar\psi|_{\vr^{-2}\ig^{(1)}}|\va|_{\ig^{(2)}} \big)^{n^*-2}\bar\psi\ot\va.
\end{\equ}

We will now show how the assumption $(H)$ on $(M_2,\ig^{(2)},\sa_{M_2})$ enters. In fact, if $M_2$
possesses a nontrivial eigenspinor $\va_{M_2}$ of constant length for some $\lm\neq0$, then
by substituting $\bar\psi\ot\va_{M_2}$ into \eqref{equ0} we get the equivalent problem
\begin{\equ}\label{equ1}
\tilde D_{\vr^{-2}\ig^{(1)}}^{M_1}\bar\psi + \lm\ov{\om_\C^{M_1}}\cdot_{\mbox{\tiny $\vr^{-2}\ig^{(1)}$}}\bar\psi=
\big(|\bar\psi|_{\vr^{-2}\ig^{(1)}}\big)^{n^*-2}\bar\psi
\end{\equ}
which is sitting on $M_1$.
Here, we adopt the convention that $\lm>0$ since (up to a change
of orientation on $M_1$) the proof for $\lm<0$ is exactly the same.

Notice that the Dirac operator behaves very nicely under conformal changes (cf. \cite{Hij86, Hit74}):  Let $\ig_0$ and $\ig=e^{2u}\ig_0$ be two conformal metrics on
a Riemannian spin $m$-manifold $M$, then  there exists an
isomorphism of vector bundles $\iota:\, \mbs(M,\ig_0)\to
\mbs(M,\ig)$ which is a fiberwise isometry such that
\[
D_\ig^M\big( \iota(\psi) \big)=\iota\big( e^{-\frac{m+1}2 u}D_{\ig_0}^M
\big( e^{\frac{m-1}2 u}\psi \big)\big).
\]
Thus Eq. \eqref{equ1} is conformally equivalent to
\begin{\equ}\label{equ2}
\vr\tilde D_{\ig^{(1)}}^{M_1}\psi+ \lm\om_\C^{M_1}\cdot_{\mbox{\tiny $\ig^{(1)}$}}\psi=
|\psi|_{\ig^{(1)}}^{n^*-2}\psi \quad \text{on } M_1
\end{\equ}
where $\om_\C^{M_1}\cdot_{\mbox{\tiny $\ig^{(1)}$}}$ denotes the action of the chirality operator with respect to the metric $\ig^{(1)}$.

\subsection{The configuration space and the Lagrangian}

Plainly, our goal is reduced to find solutions of \eqref{equ2} for varying $\vr>0$. Notice that Eq. \eqref{equ2} is well-defined on $M_1$, and $n^*=\frac{2n}{n-1}<\frac{2m_1}{m_1-1}=m_1^*$. It is not necessary to carry the super- and sub-scripts in $(M_1,\ig^{(1)})$, $\tilde D_{\ig^{(1)}}^{M_1}$ and $\om_\C^{M_1}$ during the proofs, hence in order to simplify the notation, we consider the following generalized problem
\begin{\equ}\label{equ3}
\vr\tilde D_\ig\psi+ a \om_\C\cdot_\ig\psi=f(|\psi|_g)\psi, \quad
\psi:M\to\tilde\mbs(M)
\end{\equ}
on a closed spin $m$-manifold $(M,\ig,\sa)$, where $a>0$ is a constant and $f:[0,\infty)\to[0,\infty)$ is the nonlinearity. Clearly, $f(s)=s^{n^*-2}$ is our primary concern.
Unless otherwise stated, we will occasionally drop the subscript of $|\cdot|_{\ig}$ on $\tilde\mbs(M)$ for notational convenience.

In the sequel, for $q>1$, let us denote $L^q:=L^q(M,\tilde\mbs(M))$
with the norm
$|\cdot|_q^q:=\int_M|\cdot|^q d\vol_\ig$. In particular, for $q=2$, we have $L^2$ is a Hilbert space with inner product
$(\cdot,\cdot)_2=\Real\int_M(\cdot,\cdot)d\vol_\ig$.

For each fixed $\vr>0$, let $A_\vr:=\vr\tilde D_\ig + a \om_\C\cdot_\ig$ denote the self-adjoint operator on $L^2$ with domain $\cd(A_\vr)=H^1\equiv H^1(M,\tilde\mbs(M))$. It is already known that, on a closed spin manifold $(M,\ig)$, the spectrum
$Spec(A_\vr)\subset(-\infty,-a]\cup[a,+\infty)$ is symmetric about the origin and consists of an unbounded discrete sequence of eigenvalues (with finite multiplicity for each eigenvalue), see \cite{SX2020} for details.

Thus, from the classical spectral theory of elliptic self-adjoint operators, we may choose a complete orthonormal basis $\psi_{\pm1}^\vr,\psi_{\pm2}^\vr,\dots$ of $L^2$ consisting of the eigenspinors of $A_\vr$, i.e.
$A_\vr\psi_{\pm k}^\vr=\lm_{\pm k}(\vr)\psi_{\pm k}^\vr$ and the spectrum $Spec(A_\vr)$ will be denoted as
\[
\cdots\leq \lm_{-2}(\vr)\leq \lm_{-1}(\vr)<0< \lm_1(\vr)\leq\lm_2(\vr)\leq\cdots,
\]
where each eigenvalue appears with its multiplicity.
In particular, we have $\lm_k(\vr)=-\lm_{-k}(\vr)$ and $|\lm_{\pm k}(\vr)|\to+\infty$ as $k\to\infty$.

Define the unbounded operator $|A_\vr|^s:L^2\to L^2$, $s\geq0$, by
\[
|A_\vr|^s \psi = \sum_{k=-\infty}^{\infty}|\lm_k(\vr)|^s \al_k\psi_k^\vr
\]
where $\psi=\sum_{k=-\infty}^\infty\al_k\psi_k^\vr\in L^2$. In this way, we can introduce the domain of $|A_\vr|^s$ in $L^2$ as
\[
\msh^s:=\bigg\{ \psi=\sum_{k=1}^\infty\al_k\psi_k^\vr\in L^2:\,
\sum_{k=-\infty}^\infty |\lm_k(\vr)|^{2s} |\al_k|^2<\infty \bigg\}.
\]
It is worth pointing out that $\msh^{\frac12}$ coincides with the Sobolev space of order $\frac12$, that is  $W^{\frac12,2}(M,\tilde\mbs(M))$
(see for instance \cite{Adams, Ammann}). Moreover, we can equip $\ch:=\msh^{\frac12}$ with the inner product
\begin{\equ}\label{the norm}
\inp{\psi}{\va}_\vr:=\frac1{\vr^m}\,\Real\int_M\big(|A_\vr|^{1/2}\psi,
|A_\vr|^{1/2}\va\big) d\vol_\ig
\end{\equ}
and the induced norm $\|\cdot\|_\vr$
such that $(\ch,\inp{\cdot}{\cdot}_\vr)$ becomes a Hilbert space.
Remark that, in the above notations, we have emphasized the dependence on the parameter $\vr$ because it appears in the differential operator and its spectrum. The dual space of $\ch$ will be denoted by $\ch^*=W^{-\frac12,2}(M,\tilde\mbs(M))$. Identifying $\ch$ with $\ch^*$, we will use the same notation $\inp{\cdot}{\cdot}_\vr$ to denote the norm on $\ch^*$.

Recall that we have an $(\cdot,\cdot)_2$-orthogonal decomposition
\[
L^2=L_\vr^+\op L_\vr^-, \quad \psi=\psi^++\psi^-
\]
with
\[
L_\vr^+:= \ov{\bigoplus_{k=1}^{+\infty} \ker(A_\vr-\lm_k(\vr))} \quad \text{and} \quad
L_\vr^-:= \ov{\bigoplus_{k=1}^{\infty} \ker(A_\vr-\lm_{-k}(\vr))}
\]
so that $A_\vr$ is positive definite on $L_\vr^+$ and negative definite on $L_\vr^-$. This leads to the orthogonal decomposition of $\ch$ with respect to the inner product $\inp{\cdot}{\cdot}_\vr$ as
\[
\ch=\ch_\vr^+\op \ch_\vr^-, \quad \ch_\vr^\pm=\ch\cap L_\vr^\pm.
\]

On the Banach space $L^q$, $q>1$, we introduce the new norm
\[
|\psi|_{q,\vr}=\bigg( \frac1{\vr^m}\int_M|\psi|^q d\vol_\ig \bigg)^{\frac1q} \quad \text{for each } \vr>0.
\]
Then, recall $m^*=\frac{2m}{m-1}$, we have (cf. \cite{SX2020})

\begin{Lem}\label{embeding lemma}
If $\vr>0$ is small, then for any $q\in[2,m^*]$ the embedding
$\id_\ch:(\ch,\|\cdot\|_\vr)\hookrightarrow (L^q, |\cdot|_{q,\vr})$
is bounded independent of $\vr$, that is, there exists $c_q>0$ such that
\[
|\psi|_{q,\vr}\leq c_q \|\psi\|_\vr \quad \text{for all } \psi\in\ch \text{ and all $\vr>0$ small.}
\]
In particular, the embedding is compact for $q\in[2,m^*)$.
\end{Lem}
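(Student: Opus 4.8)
The plan is to treat the three ranges $q=2$, $q\in(2,m^*)$, $q=m^*$ separately, only the last being substantial. For $q=2$: since $\spec(A_\vr)\subset(-\infty,-a]\cup[a,+\infty)$ by \cite{SX2020}, one has $|A_\vr|\geq a$, and using $\Real\int_M(|A_\vr|^{1/2}\psi,|A_\vr|^{1/2}\psi)\,d\vol_\ig=\Real\int_M(|A_\vr|\psi,\psi)\,d\vol_\ig$ gives $|\psi|_{2,\vr}^2=\vr^{-m}\int_M|\psi|^2\,d\vol_\ig\leq a^{-1}\vr^{-m}\Real\int_M(|A_\vr|\psi,\psi)\,d\vol_\ig=a^{-1}\|\psi\|_\vr^2$, so $c_2=a^{-1/2}$. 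Once the critical case is settled, for $q\in(2,m^*)$ I would pick $\theta\in(0,1)$ with $\frac1q=\frac{1-\theta}2+\frac\theta{m^*}$ and apply H\"older, $\int_M|\psi|^q\leq(\int_M|\psi|^2)^{1-\theta}(\int_M|\psi|^{m^*})^{\theta}$; since the weights $\vr^{-m}$ distribute consistently among the three factors this becomes $|\psi|_{q,\vr}\leq|\psi|_{2,\vr}^{1-\theta}|\psi|_{m^*,\vr}^{\theta}\leq c_2^{1-\theta}c_{m^*}^{\theta}\|\psi\|_\vr$. Compactness for $q<m^*$ then follows from the classical Rellich--Kondrachov embedding $W^{1/2,2}(M,\tilde\mbs(M))\hookrightarrow L^q$ (which holds because $q<\tfrac{2m}{m-1}$) together with this interpolation: a bounded sequence in $\ch$ is bounded in $L^{m^*}$, has a subsequence converging strongly in $L^2$, hence converges strongly in $L^q$.

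For the critical case the plan hinges on the algebraic identity $A_\vr^2=\vr^2\tilde D_\ig^2+a^2\,\id$, valid because the chirality operator satisfies $\om_\C^2=\id$ and anticommutes with $\tilde D_\ig$, so the mixed terms in $(\vr\tilde D_\ig+a\om_\C)^2$ cancel. Thus $|A_\vr|=\sqrt{\vr^2\tilde D_\ig^2+a^2}$, and up to constants depending only on $a$ the norm $\|\psi\|_\vr^2=\vr^{-m}\Real\int_M(\sqrt{\vr^2\tilde D_\ig^2+a^2}\,\psi,\psi)\,d\vol_\ig$ is equivalent to the $\vr$-rescaled Sobolev norm $\vr^{-m}\Real\int_M(\sqrt{\id+\vr^2\tilde D_\ig^2}\,\psi,\psi)\,d\vol_\ig$. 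On the Euclidean model $\tilde D_{\ig_{\R^m}}^2=-\Delta$, hence $|A_\vr|=\sqrt{-\vr^2\Delta+a^2}$ has Fourier symbol $\sqrt{\vr^2|\xi|^2+a^2}$; the substitution $u(x)=v(x/\vr)$ then shows, by a direct Fourier computation, that $|u|_{m^*,\vr}=\|v\|_{L^{m^*}(\R^m)}$ and $\|u\|_\vr^2=\Real\int_{\R^m}(\sqrt{-\Delta+a^2}\,v,v)\,dx$, the latter an equivalent $H^{1/2}(\R^m)$-norm. So on the flat model the asserted estimate is \emph{exactly} the fractional Sobolev inequality $\|v\|_{L^{m^*}(\R^m)}\lesssim\|v\|_{H^{1/2}(\R^m)}$; the precise powers of $\vr$ in the weighted norms are what makes the inequality invariant under the dilation $x\mapsto\vr x$ at the critical exponent.

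To pass from $\R^m$ to $M$ I would localize: cover $M$ by finitely many geodesic balls $B_r(\xi_i)$, $i=1,\dots,K$, of a fixed small radius $r<\mathrm{inj}(M)/2$, take cutoffs $\chi_i\equiv1$ on $B_r(\xi_i)$ supported in $B_{2r}(\xi_i)$, with $K$ and the overlap number independent of $\vr$. On $B_{2r}(\xi_i)$, using the Bourguignon--Gauduchon trivialization, identify the spinor bundle over $B_{2r}(\xi_i)$ isometrically with the trivial spinor bundle over $B_{2r}(0)\subset\R^m$ in geodesic normal coordinates; the coefficients of $A_\vr$ then differ from those of the flat operator $\vr\tilde D_{\ig_{\R^m}}+a\om_\C$ by terms that are $O(r)$ in $C^1$, uniformly in $\vr$. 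The dilation $x\mapsto\vr x$ maps $B_{2r}(0)$ onto the large ball $B_{2r/\vr}(0)$ on which, because $M$ is Euclidean at scale $\vr$, the rescaled operator is a controlled perturbation of the flat one; hence the flat Sobolev inequality of the previous step carries over to each $B_r(\xi_i)$ with a constant independent of $\vr$, provided $r$ (and thus $\vr_0$) is small. Summing over the cover, $|\psi|_{m^*,\vr}^{m^*}\leq\sum_i|\chi_i\psi|_{m^*,\vr}^{m^*}\lesssim\sum_i\|\chi_i\psi\|_\vr^{m^*}\leq(\sum_i\|\chi_i\psi\|_\vr^2)^{m^*/2}$, so matters reduce to the subordination bound $\sum_i\|\chi_i\psi\|_\vr^2\lesssim\|\psi\|_\vr^2$, uniformly in $\vr$.

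The main obstacle is this last bound, where the nonlocality of $\msh^{1/2}=W^{1/2,2}$ enters: multiplication by $\chi_i$ does not commute with $|A_\vr|^{1/2}$, so one must control $[\,|A_\vr|^{1/2},\chi_i\,]$ on $L^2$ uniformly in $\vr$. Writing $|A_\vr|^{1/2}(\chi_i\psi)=\chi_i|A_\vr|^{1/2}\psi+[\,|A_\vr|^{1/2},\chi_i\,]\psi$ and using $\sum_i\chi_i^2\lesssim1$, everything reduces to the estimate $\|[\,|A_\vr|^{1/2},\chi_i\,]\|_{L^2\to L^2}=O(1)$ as $\vr\to0$, which follows from the symbolic calculus for the symbol $\sqrt{\vr^2|\xi|^2+a^2}$: the commutator symbol is $O(\vr\|\nabla\chi_i\|_\infty)$, so this term is in fact small and is absorbed via $|\psi|_{2,\vr}\lesssim\|\psi\|_\vr$. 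All remaining ingredients---the $C^1$-closeness of the conjugated $A_\vr$ to the flat operator on the expanding balls, and the bounded-overlap bookkeeping---are routine once the Bourguignon--Gauduchon estimates are in hand; the only genuinely delicate point throughout is keeping every constant independent of $\vr$, which is precisely why the norms are normalised with the factor $\vr^{-m}$.
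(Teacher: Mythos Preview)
Your approach is correct but takes a substantially different route from the paper's. You work directly at the $H^{1/2}$ level: localize with a fixed partition of unity, transplant each piece to $\R^m$ via Bourguignon--Gauduchon, rescale, invoke the flat fractional Sobolev inequality, and close the argument with a semiclassical commutator bound $\|[\,|A_\vr|^{1/2},\chi_i\,]\|_{L^2\to L^2}=O(\vr)$. This is sound (your symbol computation for the commutator is right, and the metric error after rescaling is $O(r^2)$ uniformly in $\vr$, so choosing $r$ small once suffices), but the nonlocality of $|A_\vr|^{1/2}$ forces you through pseudodifferential machinery.

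The paper instead lifts the problem to the \emph{local} level $\msh^1$ via the Lichnerowicz formula: from $A_\vr^2=\vr^2\tilde D_\ig^2+a^2$ and $\tilde D_\ig^2=\nabla^*\nabla+\tfrac14 Scal_\ig$ one gets
\[
\big||A_\vr|\psi\big|_2^2=\int_M \vr^2|\nabla\psi|^2+\Big(a^2+\frac{\vr^2}4Scal_\ig\Big)|\psi|^2\,d\vol_\ig,
\]
which for small $\vr$ is uniformly equivalent to the $\vr$-weighted $H^1$ norm. At the $H^1$ level the Sobolev embedding into $L^{2m/(m-2)}$ is differential and its $\vr$-uniformity is immediate from scaling; one then applies Calder\'on--Lions interpolation between $\msh^1\hookrightarrow L^{2m/(m-2)}$ and $L^2\hookrightarrow L^2$ at parameter $\theta=\tfrac12$, landing exactly on $\msh^{1/2}\hookrightarrow L^{m^*}$. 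This completely sidesteps the commutator estimate and the chart-by-chart bookkeeping. Your H\"older interpolation for intermediate $q$ and the compactness argument are fine and essentially the same in spirit as what the interpolation framework gives.

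In short: both arguments hinge on the same algebraic identity $A_\vr^2=\vr^2\tilde D_\ig^2+a^2$, but the paper exploits it to pass to first order (where everything is local), while you stay at half order and pay for it with a commutator estimate. Your method is more hands-on and would generalize to situations where a clean Lichnerowicz-type square is unavailable; the paper's is shorter when such a formula is at hand.
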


\begin{Rem}
The proof of this lemma is a combination of the Lichnerowicz formula
\[
(\tilde D_\ig)^2=\nabla^*\nabla+\frac14 Scal_\ig,
\]
where $Scal_\ig$ is the scalar curvature of $(M,\ig)$, and an application of the Calder\'on-Lions interpolation theorem (see \cite{RS}) between $\msh^1$ and $L^2$.
In fact, for $\psi\in C^\infty(M,\tilde\mbs(M))$, we have
\begin{\equ}\label{norm esti1}
\big||A_\vr|\psi\big|_2^2=\int_M \vr^2|\nabla\psi|^2
+\Big(a^2+\frac{\vr^2}4Scal_\ig\Big)|\psi|^2 d\vol_\ig.
\end{\equ}
It follows that, for large positive $\vr$, the influence of the scalar curvature
$Scal_\ig$ enters. In this situation, \eqref{norm esti1} may no longer be a norm when $Scal_\ig$ possesses certain negative parts.
And this fact will probably affect the embedding constant of $\ch=\msh^{\frac12}\hookrightarrow L^{m^*}$.
\end{Rem}

With Lemma \ref{embeding lemma}, we introduce the following conditions for the nonlinearity $f$ in Eq. \eqref{equ3} which contains the power function $f(s)=s^{p-2}$ as a special case:
\begin{itemize}
\item[$(f_1)$] $f(0)=0$, $f\in C^1(0,\infty)$ and $f'(s)>0$ for $s>0$; 

\item[$(f_2)$] there exists $p\in(2,m^*)$, $c>0$ such that $f'(s)s\leq c(1+s^{p-2})$ for $s\geq0$;

\item[$(f_3)$]  there exists $\theta>0$ such that $f(s)\leq\frac1\theta f'(s)s$ for $s>0$.

\end{itemize}
Let $F(s)$ be the primitive function of $f(s)s$, i.e. $F(s)=\int_0^sf(t)t\,dt$, it is standard to see that Eq. \eqref{equ3} is the Euler-Lagrange equation of the functional
\begin{eqnarray}\label{the functional}
\cl_\vr(\psi)&=&\frac1{\vr^m}\int_M\Big( \frac12(A_\vr\psi,\psi)-F(|\psi|)
\Big) d\vol_\ig  \nonumber\\[0.5em]
&=&\frac12\big(\|\psi^+\|_\vr^2-\|\psi^-\|_\vr^2 \big)-
\frac1{\vr^m}\int_M F(|\psi|)d\vol_\ig
\end{eqnarray}
defined on $\ch=\ch_\vr^+\op\ch_\vr^-$. And by Lemma \ref{embeding lemma}, we have
$\cl_\vr\in C^2(\ch,\R)$.

We emphasize that the relation between $F$ and $f$ is $F'(s)=f(s)s$. And by an abuse of notation, we simply write
$\psi=\psi^++\psi^-$ for the orthogonal decomposition of $\ch$ without
mentioning its dependence on $\vr$. However, one should always keep in mind that,
for different values of $\vr$, this decomposition of a spinor $\psi$ is different.

\subsection{The reduced action}

Let us begin with the compactness of the functional $\cl_\vr$.

\begin{Lem}\label{PS-condition}
For each $\vr>0$ small, $\cl_\vr$ satisfies the $(P.S.)_c$-condition
for $c\geq0$, that is,
\[ \left.
\aligned
\cl_\vr(\psi_n) \to c \  \\
\cl_\vr'(\psi_n)\to0 \
\endaligned \right\} \Rightarrow \{\psi_n\} \text{ possesses
a convergent subsequence in } \ch.
\]
Moreover, $\psi_n\to0$ if and only if $c=0$.
\end{Lem}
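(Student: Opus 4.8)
The plan is to verify the $(P.S.)_c$-condition by the standard two-step scheme for strongly indefinite functionals with an Ambrosetti-Rabinowitz-type nonlinearity: first establish boundedness of any $(P.S.)_c$-sequence, then upgrade a weakly convergent subsequence to a strongly convergent one using the local compactness from Lemma \ref{embeding lemma}. Throughout, fix $\vr>0$ small so that the embedding constants in Lemma \ref{embeding lemma} are uniform and $\cl_\vr\in C^2(\ch,\R)$. Let $\{\psi_n\}\subset\ch$ satisfy $\cl_\vr(\psi_n)\to c$ and $\cl_\vr'(\psi_n)\to0$ in $\ch^*$.

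\emph{Step 1: boundedness.} I would compute $\cl_\vr(\psi_n)-\tfrac1{\theta'}\cl_\vr'(\psi_n)[\psi_n]$ for a suitable $\theta'\in(2,\theta+2)$ (using $(f_3)$), which yields
\[
\cl_\vr(\psi_n)-\tfrac1{\theta'}\cl_\vr'(\psi_n)[\psi_n]
=\Big(\tfrac12-\tfrac1{\theta'}\Big)\big(\|\psi_n^+\|_\vr^2-\|\psi_n^-\|_\vr^2\big)
+\tfrac1{\vr^m}\int_M\Big(\tfrac1{\theta'}f(|\psi_n|)|\psi_n|^2-F(|\psi_n|)\Big)\,d\vol_\ig,
\]
where the integrand is nonnegative by $(f_3)$ once $\theta'\le\theta+2$; wait — more carefully, $(f_3)$ gives $F(s)\le\tfrac1{\theta+2}f(s)s^2$ after integrating $F'(s)=f(s)s\le\tfrac1\theta f'(s)s^2$, so choosing $\tfrac1{\theta'}>\tfrac1{\theta+2}$, i.e. $\theta'<\theta+2$, makes the integral term have a good sign. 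The left-hand side is $O(1+\|\psi_n\|_\vr)$. This only controls $\|\psi_n^+\|_\vr^2-\|\psi_n^-\|_\vr^2$, not the full norm, so the indefiniteness has to be handled: I would test $\cl_\vr'(\psi_n)$ against $\psi_n^+$ and $\psi_n^-$ separately to get
\[
\|\psi_n^\pm\|_\vr^2 = \pm\cl_\vr'(\psi_n)[\psi_n^\pm]\pm\tfrac1{\vr^m}\int_M f(|\psi_n|)(\psi_n,\psi_n^\pm)\,d\vol_\ig,
\]
bound the nonlinear term via $(f_2)$ and Hölder by $c\,(1+|\psi_n|_{p,\vr}^{p-1})|\psi_n^\pm|_{p,\vr}\lesssim(1+\|\psi_n\|_\vr^{p-1})\|\psi_n^\pm\|_\vr$ using Lemma \ref{embeding lemma} ($p<m^*$), and combine with the integral estimate above controlling $\int f(|\psi_n|)|\psi_n|^2$, hence $|\psi_n|_{p,\vr}^p$ (again by $(f_3)$ and $(f_2)$), to close the argument and conclude $\|\psi_n\|_\vr$ is bounded. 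This step is the main obstacle: the bookkeeping that simultaneously controls the quadratic form difference and the nonlinear $L^p$-mass, and feeds them back into the $\psi_n^\pm$ equations, is the delicate part; it is exactly where conditions $(f_2)$ and $(f_3)$ are used essentially.

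\emph{Step 2: strong convergence.} Passing to a subsequence, $\psi_n\rightharpoonup\psi$ in $\ch$ and, by the compact embedding in Lemma \ref{embeding lemma} (valid for $q\in[2,m^*)$, in particular for $q=p$), $\psi_n\to\psi$ strongly in $L^q$ for all $q\in[2,m^*)$ and a.e. on $M$. Then, using $(f_1)$–$(f_2)$ and a standard Vitali/dominated-convergence argument, the Nemytskii operator $\psi\mapsto f(|\psi|)\psi$ maps $L^p$-convergent sequences to $L^{p'}$-convergent ones; hence $\tfrac1{\vr^m}\int_M f(|\psi_n|)(\psi_n,\psi_n^\pm-\psi^\pm)\,d\vol_\ig\to0$. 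Writing
\[
\|\psi_n^\pm-\psi^\pm\|_\vr^2
=\pm\big(\cl_\vr'(\psi_n)-\cl_\vr'(\psi)\big)[\psi_n^\pm-\psi^\pm]
\pm\tfrac1{\vr^m}\int_M\big(f(|\psi_n|)\psi_n-f(|\psi|)\psi,\psi_n^\pm-\psi^\pm\big)\,d\vol_\ig,
\]
the first bracket tends to $0$ because $\cl_\vr'(\psi_n)\to0$, $\cl_\vr'(\psi)$ is fixed, and $\psi_n^\pm-\psi^\pm\rightharpoonup0$; the second tends to $0$ by the above. Therefore $\psi_n^\pm\to\psi^\pm$ in $\ch_\vr^\pm$, so $\psi_n\to\psi$ in $\ch$, proving the $(P.S.)_c$-condition.

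\emph{The dichotomy $\psi_n\to0\iff c=0$.} If $\psi_n\to0$ then $\cl_\vr(\psi_n)\to\cl_\vr(0)=0$, so $c=0$. Conversely, if $c=0$: by Step 2 a subsequence converges to some critical point $\psi$ with $\cl_\vr(\psi)=0$ and $\cl_\vr'(\psi)=0$; testing $\cl_\vr'(\psi)[\psi^+-\psi^-]=0$ gives $\|\psi^+\|_\vr^2+\|\psi^-\|_\vr^2=\tfrac1{\vr^m}\int_M f(|\psi|)(\psi,\psi^+-\psi^-)\,d\vol_\ig$, and combining $\cl_\vr(\psi)=0$ with $\cl_\vr'(\psi)[\psi]=0$ together with the strict sign of $\tfrac1\theta f'(s)s^2-F(s)$ for $s>0$ (from $(f_1)$, $(f_3)$) forces $\psi\equiv0$; since the limit of any subsequence is the same, the whole sequence converges to $0$. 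I would remark that the restriction to small $\vr$ enters only through Lemma \ref{embeding lemma}, ensuring the Sobolev constants and the compactness are available uniformly.
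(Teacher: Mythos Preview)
Your Step~2 and the dichotomy argument are correct and agree with the paper (which is terser on Step~2, simply invoking the compact embedding $\ch\hookrightarrow L^p$).

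The gap is in Step~1. Your claim that controlling $\int_M f(|\psi_n|)|\psi_n|^2\,d\vol_\ig$ yields control of $|\psi_n|_{p,\vr}^p$ ``by $(f_3)$ and $(f_2)$'' does not follow. From $(f_3)$ one only gets $f(s)\ge c\,s^\theta$ for $s\ge1$, and compatibility of $(f_2)$ with $(f_3)$ forces $\theta\le p-2$; hence $\int f|\psi_n|^2$ bounds $|\psi_n|_{\theta+2,\vr}^{\theta+2}$ but not $|\psi_n|_{p,\vr}^p$ in general. Worse, the naive growth bound $f(s)s\le c(s+s^{p-1})$ that underlies your estimate leaves an $L^2$--$L^2$ contribution $c\,|\psi_n|_{2,\vr}|\psi_n^\pm|_{2,\vr}\lesssim c\,\|\psi_n\|_\vr^2$ on the right-hand side whose constant is not small and therefore cannot be absorbed; the argument does not close.

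The paper fixes both issues in one stroke. First, it takes $\cl_\vr(\psi_n)-\tfrac12\cl_\vr'(\psi_n)[\psi_n]$ (not $\tfrac1{\theta'}$), so the indefinite quadratic part cancels identically and one gets directly
\[
\frac1{\vr^m}\int_M\Big(\frac12 f(|\psi_n|)|\psi_n|^2-F(|\psi_n|)\Big)d\vol_\ig=c+o(\|\psi_n\|_\vr),
\]
hence $\int f|\psi_n|^2=O(1+\|\psi_n\|_\vr)$. Second, testing against $\psi_n^+-\psi_n^-$ (which yields $\|\psi_n\|_\vr^2$ at once) it uses the pointwise splitting
\[
f(s)s\le \de\, s + c_\de\big(f(s)s^2\big)^{(p-1)/p},
\]
valid under $(f_1)$--$(f_3)$ because $f(s)\le C(1+s^{p-2})$. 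The $\de$ makes the $L^2$ term absorbable, and the second piece, via H\"older with exponents $p$ and $\tfrac{p}{p-1}$, is bounded by $c_\de\big(\int f|\psi_n|^2\big)^{(p-1)/p}\|\psi_n\|_\vr$, which feeds back into the first identity without ever needing a lower growth bound on $f$. This gives
\[
\|\psi_n\|_\vr^2\le \de\, c_2\|\psi_n\|_\vr^2+C_{p,\de}\big(c+o(\|\psi_n\|_\vr)\big)^{(p-1)/p}\|\psi_n\|_\vr+o(\|\psi_n\|_\vr),
\]
and boundedness follows. Your plan can be repaired by replacing the $|\psi_n|_{p,\vr}^{p-1}$ route with exactly this inequality.
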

\begin{proof}
Since $\cl_\vr'(\psi_n)\to0$ in $\ch^*$, we have
\begin{\equ}\label{PS1}
c+o(\|\psi_n\|_\vr)=\cl_\vr(\psi_n)-\frac12\cl_\vr'(\psi_n)[\psi_n]
=\frac1{\vr^m}\int_M \frac12 f(|\psi_n|)|\psi_n|^2-F(|\psi_n|)d\vol_\ig.
\end{\equ}
We also have
\[
o(\|\psi_n\|_\vr)=\cl_\vr'(\psi_n)[\psi_n^+-\psi_n^-]=\|\psi_n\|_\vr^2-\frac1{\vr^m}
\Real\int_M f(|\psi_n|)(\psi_n,\psi_n^+-\psi_n^-)d\vol_\ig.
\]

By $(f_1)$-$(f_3)$, one checks easily that for arbitrarily small $\de>0$ there exists $c_\de>0$ such that  $f(s)s\leq \de s +c_\de (f(s)s^2)^{\frac{p-1}{p}}$ and $F(s)\leq\frac1{\theta+2}f(s)s^2$ for all $s\geq0$. From this, together with Lemma \ref{embeding lemma}, we obtain
\begin{eqnarray}\label{PS2}
\|\psi_n\|_\vr^2 &\leq& \frac1{\vr^m}\int_M f(|\psi_n|) |\psi_n|\cdot |\psi_n^+-\psi_n^-| d\vol_\ig
+o(\|\psi_n\|_\vr)    \nonumber \\
&\leq& \de|\psi_n|_{2,\vr}|\psi_n^+-\psi_n^-|_{2,\vr}+c_\de\Big(
\frac1{\vr^m}\int_M f(|\psi_n|)|\psi_n|^2 d\vol_\ig\Big)^{\frac{p-1}p} |\psi_n^+-\psi_n^-|_{p,\vr}   +o(\|\psi_n\|_\vr) \nonumber \\
&\leq& \de c_2\|\psi_n\|_\vr^2+
C_{p,\de} \big( c+o(\|\psi_n\|_\vr) \big)^{\frac{p-1}{p}}
\|\psi_n\|_\vr + o(\|\psi_n\|_\vr) .
\end{eqnarray}
By suitable choosing $\de>0$ small, it follows from \eqref{PS2} that $\{\psi_n\}$ is bounded in $\ch$ with respect to the norm $\|\cdot\|_\vr$. And due to the compact embedding of $\ch\hookrightarrow L^{p}$, one easily checks
that $\{\psi_n\}$ is compact in $\ch$.

Finally, we mention that \eqref{PS1} and \eqref{PS2} together imply:
$\psi_n\to0$ if and only if $c=0$. This completes the proof.
\end{proof}

One may see from \eqref{the functional} that the quadratic part in the functional $\cl_\vr$ is of strongly indefinite type, i.e. positive- and negative-definite on infinite dimensional subspaces of $\ch$. Hence, in order to obtain a critical point of $\cl_\vr$, it is now crucial to find a suitable min-max scheme for $\cl_\vr$. Fortunately, due to our requests on the nonlinearity $f$ (see conditions $(f_1)$-$(f_3)$), we have a very good geometric behavior of $\cl_\vr$ in the following sense:
\begin{itemize}
	\item[$(i)$] Since the function $F$ is non-negative, for each fixed $u\in\ch_\vr^+$,  the functional
	\[
	\cl_\vr(u+\cdot): \ch_\vr^-\to\R, \quad w\mapsto \cl_\vr(u+w)
	\]
	is anti-coercive (i.e. $\cl_\vr(u+w)\to-\infty$ as $\|w\|_\vr\to\infty$).
	
	\item[$(ii)$] Since $F''(s)=f'(s)s+f(s)>0$ for $s>0$, the quadratic form $\cl_\vr''(u+w)[\cdot,\cdot]$ is negative definite on $\ch_\vr^-$, in other words, the above functional $\cl_\vr(u+\cdot)$ is strictly concave on $\ch_\vr^-$.
	
	\item[$(iii)$] Since $(f_3)$ implies that $f(s)\geq c s^{\theta}$ for some constant $c>0$ and all $s\geq1$, the function $F$ has super-quadratic growth at infinity, i.e. $F(s)\geq \frac{c}{2+\theta}s^{2+\theta}$ as $s\to\infty$. And hence, for each fixed $u\in\ch_\vr^+\setminus\{0\}$,
	\begin{\equ}\label{eeee1}
	\cl_\vr(tu+w)\to-\infty \quad \text{ as } |t|+\|w\|_\vr\to\infty.
	\end{\equ}
\end{itemize}
Combining all the above three properties, for each $u\in\ch_\vr^+\setminus\{0\}$, we are able to maximize the functional $\cl_\vr$ on $\R^+u\op\ch_\vr^-$, where $\R^+=(0,\infty)$. We remark that $u$ is varying in the space $\ch_\vr^+\setminus\{0\}$, so we can restrict ourselves to the choice $u\in S_\vr^+:=\{u\in \ch_\vr^+:\,\|u\|_\vr=1\}$ without changing the maxima of $\cl_\vr$ on $\R^+u\op\ch_\vr^-$. By $(f_1)$-$(f_2)$, one deduces that $F(s)\leq \frac\de2 s^2+ C_\de s^p$ for arbitrarily small $\de>0$ and hence (by Lemma \ref{embeding lemma})
\begin{\equ}\label{eeee2}
\max_{\R^+u\op\ch_\vr^-}\cl_\vr\geq \max_{t>0}\cl_\vr(tu)\geq \max_{t>0}\Big(\frac{1-\de}2 t^2-C_\de t^p\Big)=\tau_0>0.
\end{\equ}
Therefore, we have found a candidate min-max scheme for $\cl_\vr$: we first maximize the functional $\cl_\vr$ on $\R^+u\op\ch_\vr^-$ and then minimize with respect to $u\in\ch_\vr^+\setminus\{0\}$.

\medskip

By summarizing the above observations, we have the following basic conclusion.
\begin{Prop}\label{reduction1}
For each $\vr>0$ small the following holds.
\begin{itemize}
\item[$(1)$] There exists $\chi_\vr\in C^1(\ch_\vr^+,\ch_\vr^-)$ such that for $u\in \ch_\vr^+$
\[
w\in\ch_\vr^-, \  w\neq \chi_\vr(u) \Rightarrow
\cl_\vr(u+w)<\cl_\vr(u+\chi_\vr(u));
\]
that is $\chi_\vr(u)$ is the unique maximizer of the functional $w\mapsto\cl_\vr(u+w)$ on $\ch_\vr^-$. Moreover, for $u\in \ch_\vr^+$
\[
\|\chi_\vr(u)\|_\vr^2\leq \frac2{\vr^m}\int_M F(|u|)d\vol_\ig
\]
and $\cl_\vr'(u+\chi_\vr(u))[w]\equiv 0$ for all $w\in\ch_\vr^-$;

\item[$(2)$] If $\{u_n\}$ is a $(P.S.)$-sequence for the reduced functional
\[
I_\vr: \ch_\vr^+\to \R, \quad I_\vr(u)=\cl_\vr(u+\chi_\vr(u)),
\]
then $\{u_n+\chi_\vr(u_n)\}$ is a
$(P.S.)$-sequence for $\cl_\vr$; 

\item[$(3)$] There exists $u_\vr\in\ch_\vr^+$ such that $I_\vr'(u_\vr)=0$ and
\[
I_\vr(u_\vr)=\mu_\vr:=\inf_{\ga\in\Ga_\vr}\max_{t\in[0,1]}I_\vr(\ga(t))>0,
\]
where $\Ga_\vr=\big\{\ga\in C([0,1],\ch_\vr^+):\, \ga(0)=0,\, I_\vr(\ga(1))<0\big\}$.
In particular, $\bar\psi_\vr=u_\vr+\chi_\vr(u_\vr)$ is a non-trivial critical point of $\cl_\vr$.
\end{itemize}
\end{Prop}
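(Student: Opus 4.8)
The plan is to run the standard reduction for strongly indefinite functionals, using as input exactly the three properties $(i)$--$(iii)$ recorded just above the statement. For each fixed $u\in\ch_\vr^+$ one maximizes $\cl_\vr(u+\cdot)$ over the negative subspace $\ch_\vr^-$; this produces the reduction map $\chi_\vr$, after which the composition $I_\vr(u):=\cl_\vr(u+\chi_\vr(u))$ has a genuine, positive-definite mountain-pass geometry on $\ch_\vr^+$, and a critical point of $\cl_\vr$ is extracted from it through Lemma \ref{PS-condition}.

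For part $(1)$, I would first fix $u\in\ch_\vr^+$ and study $J_u\colon\ch_\vr^-\to\R$, $J_u(w)=\cl_\vr(u+w)$. By $(i)$ it is anti-coercive, and since $\ch\hookrightarrow L^q$ is compact for $q\in[2,m^*)$ by Lemma \ref{embeding lemma} while $F$ has at most $L^p$ growth by $(f_1)$--$(f_2)$, the map $w\mapsto\frac1{\vr^m}\int_M F(|u+w|)\,d\vol_\ig$ is sequentially weakly continuous on $\ch_\vr^-$; hence $J_u$ is weakly upper semicontinuous and attains a maximum, and by the strict concavity in $(ii)$ that maximizer is unique — call it $\chi_\vr(u)$. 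Since $\ch_\vr^-$ is a linear subspace, the first-order condition is precisely $\cl_\vr'(u+\chi_\vr(u))[w]=0$ for all $w\in\ch_\vr^-$; comparing $J_u(\chi_\vr(u))\geq J_u(0)=\cl_\vr(u)$ and using $F\geq0$ gives $\|\chi_\vr(u)\|_\vr^2\leq\frac2{\vr^m}\int_M F(|u|)\,d\vol_\ig$. For the $C^1$-regularity I would apply the implicit function theorem to the equation $\cl_\vr'(u+w)|_{\ch_\vr^-}=0$, which by uniqueness defines $w=\chi_\vr(u)$: the left-hand side is $C^1$ in $(u,w)$ because $\cl_\vr\in C^2(\ch,\R)$, and its $w$-derivative is the bilinear form $\cl_\vr''(u+w)[\cdot,\cdot]$ on $\ch_\vr^-$, which by $(ii)$ (that is, $F''(s)=f'(s)s+f(s)>0$) is bounded and coercive, $\cl_\vr''(u+w)[v,v]\leq-\|v\|_\vr^2$, hence an isomorphism of $\ch_\vr^-$ by Lax--Milgram; this yields $\chi_\vr\in C^1(\ch_\vr^+,\ch_\vr^-)$.

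For part $(2)$, I would differentiate: by the chain rule $I_\vr'(u)[h]=\cl_\vr'(u+\chi_\vr(u))[h+\chi_\vr'(u)h]$, and since $\chi_\vr'(u)h\in\ch_\vr^-$ while $\cl_\vr'(u+\chi_\vr(u))$ already annihilates $\ch_\vr^-$ by part $(1)$, this collapses to $I_\vr'(u)[h]=\cl_\vr'(u+\chi_\vr(u))[h]$ for $h\in\ch_\vr^+$. Consequently, if $\{u_n\}$ is a $(P.S.)$-sequence for $I_\vr$, then $\psi_n:=u_n+\chi_\vr(u_n)$ satisfies $\cl_\vr(\psi_n)=I_\vr(u_n)$ and $\cl_\vr'(\psi_n)\to0$ in $\ch^*$ (it vanishes identically on $\ch_\vr^-$ and tends to $0$ on $\ch_\vr^+$), which is exactly $(2)$.

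For part $(3)$, I would check the mountain-pass geometry of $I_\vr$ on $\ch_\vr^+$: from $\chi_\vr(0)=0$ one has $I_\vr(0)=0$; from $I_\vr(u)\geq\cl_\vr(u)$ together with \eqref{eeee2} one gets $\inf_{\|u\|_\vr=\rho}I_\vr\geq\al>0$ for $\rho$ small; and fixing $u_0\in\ch_\vr^+\setminus\{0\}$, orthogonality gives $\|Ru_0+\chi_\vr(Ru_0)\|_\vr\geq R\|u_0\|_\vr\to\infty$, so \eqref{eeee1} forces $I_\vr(Ru_0)=\cl_\vr(Ru_0+\chi_\vr(Ru_0))\to-\infty$, producing $e$ with $I_\vr(e)<0$ and $\Gamma_\vr\neq\emptyset$. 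The mountain-pass theorem (applicable since $I_\vr\in C^1$) then supplies a $(P.S.)_{\mu_\vr}$-sequence $\{u_n\}$ for $I_\vr$ with $\mu_\vr\geq\al>0$; by $(2)$, $\{u_n+\chi_\vr(u_n)\}$ is a $(P.S.)_{\mu_\vr}$-sequence for $\cl_\vr$, so Lemma \ref{PS-condition} gives a subsequence converging to some $\bar\psi_\vr$, which is $\neq0$ since $\cl_\vr(\bar\psi_\vr)=\mu_\vr>0$; projecting onto $\ch_\vr^\pm$ and using continuity of $\chi_\vr$ writes $\bar\psi_\vr=u_\vr+\chi_\vr(u_\vr)$ with $I_\vr(u_\vr)=\mu_\vr$, $I_\vr'(u_\vr)=0$, hence $\cl_\vr'(\bar\psi_\vr)=0$. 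I expect the only genuinely delicate point to be part $(1)$: verifying that the implicit function theorem applies globally so that $\chi_\vr$ is honestly $C^1$ and not merely continuous. The strict positivity $F''>0$ from $(ii)$ is what does the work there, simultaneously forcing uniqueness of $\chi_\vr(u)$ and, through Lax--Milgram, the invertibility of the second variation on $\ch_\vr^-$ that underlies the smooth dependence on $u$.
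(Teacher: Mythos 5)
Your argument is correct and follows exactly the route the paper takes (which simply defers to the abstract framework of Buffoni--Jeanjean--Stuart \cite{BJS}): existence and uniqueness of $\chi_\vr(u)$ from anti-coercivity plus strict concavity on $\ch_\vr^-$, $C^1$-smoothness from the implicit function theorem applied to $\cl_\vr'(u+w)|_{\ch_\vr^-}=0$ with the negative-definite second variation providing invertibility, and the mountain-pass theorem combined with Lemma \ref{PS-condition} for part $(3)$. Your write-up merely supplies the details the paper leaves to the reference, including the correct derivation of the bound $\|\chi_\vr(u)\|_\vr^2\leq\frac2{\vr^m}\int_M F(|u|)\,d\vol_\ig$ from $J_u(\chi_\vr(u))\geq J_u(0)$.
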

\begin{proof}
Similar assertions can be found in \cite[Section 2]{BJS} where a certain abstract theory has been set up. We only mention here that the existence and uniqueness of the maximizer $\chi_\vr(u)$ in assertion $(1)$ follows directly from the anti-coerciveness and strict concavity of the functional $\cl_\vr(u+\cdot)$ on $\ch_\vr^-$. The $C^1$-smoothness of $\chi_\vr$ is a consequence of the Implicit Function Theorem. 
\end{proof}

We next provide an upper bound estimate of the critical level $\mu_\vr$ obtained in the above proposition.

\begin{Lem}\label{reduction2}
For every $u\in \ch_\vr^+\setminus\{0\}$, the map $I_{\vr,u}:\R\to\R$,
$I_{\vr,u}(t)=I_\vr(tu)$, is of class $C^2$ and satisfies $I_{\vr,u}(0)=I_{\vr,u}'(0)=0$ and $I_{\vr,u}''(0)>0$. Moreover, there holds
\[
I_{\vr,u}'(t)=0,\ t>0 \quad \Longrightarrow \quad I_{\vr,u}''(t)<0.
\]
\end{Lem}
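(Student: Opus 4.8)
The strategy is to reduce everything to the one-dimensional analysis of $t\mapsto \cl_\vr(tu+\chi_\vr(tu))$ and exploit the concavity/superquadratic structure already encoded in $(f_1)$--$(f_3)$. First I would record that $I_{\vr,u}$ is $C^2$: this follows from $\cl_\vr\in C^2(\ch,\R)$ (Lemma \ref{embeding lemma}), the $C^1$-smoothness of $\chi_\vr$ from Proposition \ref{reduction1}(1), and the chain rule. The vanishing of low-order terms is immediate from homogeneity of the linear part and the $C^1$-smoothness of $\chi_\vr$ near $0$: since $\chi_\vr(0)=0$ and $\chi_\vr'(0)=0$ (the latter because the quadratic part of $\cl_\vr$ splits along $\ch_\vr^+\op\ch_\vr^-$, so the maximizer of a purely quadratic concave functional at $u=0$ is $0$ to first order), we get $I_{\vr,u}(t)=\cl_\vr(tu)+o(t^2)=\frac12 t^2-o(t^2)$, whence $I_{\vr,u}(0)=I_{\vr,u}'(0)=0$ and $I_{\vr,u}''(0)=\|u\|_\vr^2=1>0$ for $u\in S_\vr^+$ (or $>0$ in general).

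The heart of the matter is the implication $I_{\vr,u}'(t)=0,\ t>0\Rightarrow I_{\vr,u}''(t)<0$. I would argue as follows. Because $\chi_\vr(tu)$ is the maximizer over $\ch_\vr^-$, the envelope theorem gives $I_{\vr,u}'(t)=\cl_\vr'(tu+\chi_\vr(tu))[u]$ (the $\ch_\vr^-$-derivative term drops out by the critical point property in Proposition \ref{reduction1}(1)). Now fix a critical point $t_0>0$ and set $w_0=\chi_\vr(t_0 u)$, $\psi_0=t_0 u+w_0$. The key computation is to differentiate once more and use that $\cl_\vr''$ restricted to $\ch_\vr^-$ is negative definite at $\psi_0$; a standard manipulation (differentiating the identity $\cl_\vr'(tu+\chi_\vr(tu))[w]\equiv 0$ in $t$ to express $\chi_\vr'(t_0u)$) yields
\[
I_{\vr,u}''(t_0)=\cl_\vr''(\psi_0)[u,u]-\cl_\vr''(\psi_0)\big[\chi_\vr'(t_0u),\chi_\vr'(t_0u)\big]\le \cl_\vr''(\psi_0)[u+\chi_\vr'(t_0u)\cdot(\text{something}),\cdot],
\]
but more cleanly: since $\psi_0$ restricted to the half-space direction satisfies the Euler--Lagrange identity, I would instead use the pointwise convexity $F''(s)=f'(s)s+f(s)>0$ together with the critical condition $I'_{\vr,u}(t_0)=0$ rewritten as $t_0^2\|u\|_\vr^2 = \cl_\vr'(\psi_0)[\psi_0^+]+(\text{corrections})$, and compare $I''_{\vr,u}(t_0)$ with a quantity of the form $\frac1{\vr^m}\int_M\big(f(|\psi_0|)-f'(|\psi_0|)|\psi_0|-f(|\psi_0|)\big)|\cdot|^2$, which is strictly negative wherever $\psi_0\ne 0$. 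Concretely: differentiating $I_{\vr,u}'(t)=t\|u\|_\vr^2 - \frac1{\vr^m}\Real\int_M f(|\psi_t|)(\psi_t,u)\,d\vol_\ig$ (with $\psi_t=tu+\chi_\vr(tu)$) and using $\chi_\vr'(t_0u)\in\ch_\vr^-$ orthogonal to $u$, one gets $I_{\vr,u}''(t_0)=\|u\|_\vr^2-\frac1{\vr^m}\Real\int_M\big(f(|\psi_0|)\mathrm{Id}+f'(|\psi_0|)|\psi_0|^{-1}\Real(\psi_0,\cdot)\psi_0\big)[\dot\psi_0,u]\,d\vol_\ig$ with $\dot\psi_0=u+\chi_\vr'(t_0u)$; since at a critical point $\|u\|_\vr^2=\frac{1}{t_0\vr^m}\Real\int_M f(|\psi_0|)(\psi_0,u)$, subtracting and using $\dot\psi_0=u+\chi_\vr'(t_0u)$ together with negative-definiteness of $\cl_\vr''(\psi_0)$ on $\ch_\vr^-$ and the strict monotonicity $f'>0$ yields $I_{\vr,u}''(t_0)<0$.

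\textbf{Main obstacle.} The delicate point is the sign bookkeeping when eliminating $\chi_\vr'(t_0u)$: one must show that the ``cross terms'' coming from the implicit derivative of $\chi_\vr$ combine with the negative-definite Hessian on $\ch_\vr^-$ so as not to spoil the negativity. The clean way is the abstract lemma from \cite[Section 2]{BJS}: for the reduction of a strongly indefinite functional with strictly concave (in the $\ch_\vr^-$ direction) and the monotonicity $(f_1)$ guaranteeing strict convexity of $s\mapsto F(s)$, the reduced functional along any ray is strictly concave past its unique critical point. I would therefore phrase the argument as: (i) $I_{\vr,u}$ has at most one positive critical point because $I_{\vr,u}(t)\to-\infty$ as $t\to+\infty$ by \eqref{eeee1} and $I_{\vr,u}(t)>0$ for small $t>0$ by \eqref{eeee2}, forcing a maximum; (ii) at any positive critical point the second derivative cannot vanish or be positive, else by $C^2$-smoothness there would be a nearby larger critical value or an inflection contradicting $(f_1)$'s induced strict convexity of $F$; hence $I_{\vr,u}''(t_0)<0$. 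The only real work is carrying out step (ii) rigorously via the envelope/implicit-function computation sketched above, which is a routine but careful calculation.
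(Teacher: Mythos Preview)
Your treatment of the $C^2$-regularity and of the local behaviour at $t=0$ is fine and matches the paper's argument. The problem is with the main implication $I_{\vr,u}'(t_0)=0\Rightarrow I_{\vr,u}''(t_0)<0$.

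Your ``soft'' route (ii) is circular. The fact that $I_{\vr,u}$ has \emph{at most one} positive critical point is precisely what the paper deduces \emph{from} this lemma (see the definition of $\msn_\vr$ immediately afterward); you cannot invoke it here. From \eqref{eeee1} and \eqref{eeee2} alone you only get that a global maximum exists, at which $I_{\vr,u}''\le 0$; nothing excludes further critical points or the possibility $I_{\vr,u}''(t_0)=0$. Appealing vaguely to ``strict convexity of $F$'' does not help: $F''>0$ acts on the full spinor $\psi_0=t_0u+\chi_\vr(t_0u)$, not on the ray direction $u$, and the Hessian $\cl_\vr''(\psi_0)$ is indefinite.

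Your explicit route is the right one, but you have not identified the step that makes it work and one of your displayed formulas is wrong. Differentiating the constraint $\cl_\vr'(\psi_t)|_{\ch_\vr^-}\equiv 0$ gives that $\dot\psi_0:=u+\chi_\vr'(t_0u)[u]$ is $\cl_\vr''(\psi_0)$-orthogonal to $\ch_\vr^-$, hence
\[
I_{\vr,u}''(t_0)=\cl_\vr''(\psi_0)\big[\dot\psi_0,\dot\psi_0\big],
\]
not $\cl_\vr''(\psi_0)[u,u]-\cl_\vr''(\psi_0)[\chi_\vr'(t_0u),\chi_\vr'(t_0u)]$. Since $\dot\psi_0$ has the nonzero $\ch_\vr^+$-component $u$, negative definiteness of $\cl_\vr''(\psi_0)$ on $\ch_\vr^-$ is by itself useless. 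The paper's trick is to write $\dot\psi_0=\psi_0+w$ with $w:=\chi_\vr'(t_0u)[u]-\chi_\vr(t_0u)\in\ch_\vr^-$, so that one can feed in the two identities $\cl_\vr'(\psi_0)[\psi_0]=I_{\vr,u}'(t_0)\,t_0$ and $\cl_\vr'(\psi_0)[w]=0$. Expanding $\cl_\vr''(\psi_0)[\psi_0+w,\psi_0+w]$ and completing the square \emph{pointwise} in the $\Psi_\vr''$-terms (this is where $(f_1)$ and $(f_3)$ enter, via $f'(s)s>0$) yields the quantitative bound
\[
I_{\vr,u}''(t_0)\;\le\; I_{\vr,u}'(t_0)\,t_0^{-1}
-\frac1{\vr^m}\int_M\frac{f(|\psi_0|)\,f'(|\psi_0|)\,|\psi_0|^3}{f(|\psi_0|)+f'(|\psi_0|)\,|\psi_0|}\,d\vol_\ig
-\|w\|_\vr^2,
\]
which is strictly negative when $I_{\vr,u}'(t_0)=0$ and $\psi_0\not\equiv 0$. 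This is the ``careful calculation'' you defer; without the substitution $\dot\psi_0=\psi_0+w$ and the pointwise square-completion it does not go through, and the cross terms you were worried about are exactly what it eliminates.
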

\begin{proof}
First of all, we can use the fact that $I_{\vr,u}'(t)=\cl_\vr'(tu+\chi_\vr(tu))[u]$ to see that $I_{\vr,u}$ is of class $C^2$. By $(f_1)-(f_3)$
there holds
\[
	I_\vr(tu)\geq\cl_\vr(tu)
	\geq\frac{(1-\de)t^2}2\|u\|_\vr^2- C_{p,\de}\, t^{p}  \|u\|_\vr^{p}
	\quad \text{$\forall u\in\ch_\vr^+\setminus\{0\}$ and $t>0$,}
\]
for any fixed $\de>0$ small. Hence $I_\vr(0)=0$ is a strict local minimum in a neighborhood of $0\in\ch_\vr^+$. And in particular, we can see that $I_{\vr,u}(0)=I_{\vr,u}'(0)=0$ and $I_{\vr,u}''(0)>0$. Then, to complete the proof, it is sufficient to show that
\begin{\equ}\label{I-equivalent}
I_\vr'(u)[u]=0, \ u\neq0 \quad \Longrightarrow \quad I_\vr''(u)[u,u]<0.
\end{\equ}
For simplicity, let us denote $\Psi_\vr:\ch\to\R$ by $\Psi_\vr(\psi)
=\frac1{\vr^m}\int_M F(|\psi|)d\vol_\ig$ and set
$\psi=u+\chi_\vr(u)$ and $w=\chi_\vr'(u)[u]-\chi_\vr(u)$. By using
$\cl_\vr'(u+\chi_\vr(u))|_{\ch_\vr^-}\equiv0$, we see that \eqref{I-equivalent} is a direct consequence of the following computation:
\begin{eqnarray}\label{eee1}
I_\vr''(u)[u,u]&=&\cl_\vr''(\psi)[u+\chi_\vr'(u)[u],u]=\cl_\vr''(\psi)[\psi+w,\psi+w]  \nonumber\\
&=&\cl_\vr''(\psi)[\psi,\psi]+2\cl_\vr''(\psi)[\psi,w]
+\cl_\vr''(\psi)[w,w]  \nonumber\\
&=&I_\vr'(u)[u]+\big(\Psi_\vr'(\psi)[\psi]-\Psi_\vr''(\psi)[\psi,\psi]\big)
+2\big(\Psi_\vr'(\psi)[w]-\Psi_\vr''(\psi)[\psi,w] \big) \nonumber\\
& &\quad -\Psi_\vr''(\psi)[w,w]
-\|w\|_\vr^2 \nonumber\\
& \leq& I_\vr'(u)[u]-\frac1{\vr^m}\int_M \frac{f(|\psi|)f'(|\psi|)|\psi|^3}{f(|\psi|)+f'(|\psi|)|\psi|}
 d\vol_\ig-\|w\|_\vr^2.
\end{eqnarray}
Since $f(s)f'(s)s\leq(f(s)+f'(s)s)^2$ for all $s\geq0$, and the equality holds iff $s=0$, we have
\[
0<\frac{f(|\psi|)f'(|\psi|)|\psi|^3}{f(|\psi|)+f'(|\psi|)|\psi|}\leq
f(|\psi|)|\psi|^2+f'(|\psi|)|\psi|^3 \quad \text{for } \psi\neq0.
\]
Therefore, the integration in \eqref{eee1} is well-defined due to $(f_1)$-$(f_3)$.
\end{proof}

The above lemma indicates that, for each $u\in\ch_\eps^+\setminus\{0\}$, the function $I_{\vr,u}(\cdot)$ has at most one critical point $t_{\eps,u}\in(0,+\infty)$. Due to \eqref{eeee1}, one checks that such $t_{\eps,u}$ exists for every $u$. And then we can define a natural constraint for the reduced functional $I_\vr$ as
\[
\msn_\vr:=\big\{ u\in\ch_\vr^+\setminus\{0\}:\, I_\vr'(u)[u]=0 \big\}.
\]
Lemma \ref{reduction2} also implies $\msn_\vr$ is a smooth submanifold of codimension $1$ in $\ch_\vr^+$. And consequently, the critical point found in Proposition \ref{reduction1} $(3)$
can be characterized by
\begin{\equ}\label{ga-vr}
\mu_\vr:=\cl_\vr(\bar\psi_\vr)=\inf_{u\in\ch_\vr^+\setminus\{0\}}
\max_{\psi\in \R u\op \ch_\vr^-}\cl_\vr(\psi)
=\inf_{u\in\ch_\vr^+\setminus\{0\}}\max_{t>0} I_\vr(tu)
=\inf_{u\in\msn_\vr}I_\vr(u).
\end{\equ}
For later purposes,  it is worth to remind that \eqref{eeee2} implies the existence of some $\tau_0>0$ independent of $\vr$ such that $\mu_\vr\geq\tau_0$.

\medskip

In what follows, we intend to pass to the limit $\vr\to0$ and consider the asymptotic behavior of the min-max level $\mu_\vr$. The idea is to provide an upper bound estimate around an arbitrary $(P.S.)$-sequence, so that one may substitute certain test spinors
in the functional $\cl_\vr$.

Without loss of generality, we assume that $\{\phi_\vr\}\subset\ch$ is an arbitrary sequence such that
\begin{\equ}\label{assumption0}
c_1\leq \cl_\vr(\phi_\vr)\leq c_2 \quad \text{and} \quad
\|\cl_\vr'(\phi_\vr)\|_\vr\to0
\end{\equ}
as $\vr\to0$ for some constants $c_1,c_2>0$. Here, we will
identify the dual space $\ch^*$ with $\ch$.

\begin{Lem}\label{a1}
Under \eqref{assumption0}, we have:
\begin{itemize}
\item[$(1)$] $\|\phi_\vr\|_\vr$ is uniformly bounded in $\vr$.

\item[$(2)$] $\|\phi_\vr^- - \chi_\vr(\phi_\vr^+)\|_\vr\leq O\big( \|\cl_\vr'(\phi_\vr)\|_\vr \big)$
    as $ \vr\to0$.

\item[$(3)$] $I_\vr'(\phi_\vr^+)\to0$ as $\vr\to0$ in the dual space of $\ch_\vr^+$.
\end{itemize}
\end{Lem}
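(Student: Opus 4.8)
The plan is to establish the three assertions in the order stated, since (1) is the foundation for (2) and (3).

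For (1) I would reproduce the boundedness part of the argument in Lemma~\ref{PS-condition}. From the identity
\[
\cl_\vr(\phi_\vr)-\tfrac12\cl_\vr'(\phi_\vr)[\phi_\vr]=\frac1{\vr^m}\int_M\Big(\tfrac12 f(|\phi_\vr|)|\phi_\vr|^2-F(|\phi_\vr|)\Big)\,d\vol_\ig,
\]
assumption \eqref{assumption0} bounds the left-hand side by $c_2+o(\|\phi_\vr\|_\vr)$, while $(f_3)$ gives $\tfrac12 f(s)s^2-F(s)\ge\tfrac{\theta}{2(\theta+2)}f(s)s^2\ge0$; hence $\frac1{\vr^m}\int_M f(|\phi_\vr|)|\phi_\vr|^2\,d\vol_\ig\lesssim c_2+o(\|\phi_\vr\|_\vr)$. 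Testing $\cl_\vr'(\phi_\vr)$ against $\phi_\vr^+-\phi_\vr^-$ gives $\|\phi_\vr\|_\vr^2=o(\|\phi_\vr\|_\vr)+\frac1{\vr^m}\Real\int_M f(|\phi_\vr|)(\phi_\vr,\phi_\vr^+-\phi_\vr^-)\,d\vol_\ig$, and the pointwise bound $f(s)s\le\de s+c_\de(f(s)s^2)^{(p-1)/p}$, together with H\"older's inequality and the $\vr$-uniform embedding of Lemma~\ref{embeding lemma} (valid for $\vr$ small), bounds the right-hand side by $\de\,c\,\|\phi_\vr\|_\vr^2+C_{p,\de}\big(c_2+o(\|\phi_\vr\|_\vr)\big)^{(p-1)/p}\|\phi_\vr\|_\vr+o(\|\phi_\vr\|_\vr)$ with $c$ independent of $\vr$; choosing $\de$ small enough that $\de c<1$ then forces $\|\phi_\vr\|_\vr$ to be uniformly bounded.

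For (2), set $v_\vr:=\phi_\vr^--\chi_\vr(\phi_\vr^+)\in\ch_\vr^-$ and consider $g(t):=\cl_\vr\big(\phi_\vr^++\chi_\vr(\phi_\vr^+)+tv_\vr\big)$ on $[0,1]$. Since $\chi_\vr(\phi_\vr^+)$ maximizes $\cl_\vr(\phi_\vr^++\cdot)$ on $\ch_\vr^-$, Proposition~\ref{reduction1}(1) gives $g'(0)=\cl_\vr'(\phi_\vr^++\chi_\vr(\phi_\vr^+))[v_\vr]=0$, while $g'(1)=\cl_\vr'(\phi_\vr)[v_\vr]$, so $|g'(1)|\le\|\cl_\vr'(\phi_\vr)\|_\vr\|v_\vr\|_\vr$. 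On the other hand $\cl_\vr''(\psi)[v,v]=-\|v\|_\vr^2-\Psi_\vr''(\psi)[v,v]\le-\|v\|_\vr^2$ for every $\psi$ and every $v\in\ch_\vr^-$, because $\Psi_\vr''\ge0$ (as $f,f'\ge0$ by $(f_1)$). Hence $g''(s)\le-\|v_\vr\|_\vr^2$ on $[0,1]$, so $\|v_\vr\|_\vr^2\le g'(0)-g'(1)=-g'(1)\le\|\cl_\vr'(\phi_\vr)\|_\vr\|v_\vr\|_\vr$, which gives the estimate with constant $1$; observe that no smallness of $\vr$ is needed here.

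For (3), recall (as used in Lemma~\ref{reduction2}) that $I_\vr'(u)[h]=\cl_\vr'(u+\chi_\vr(u))[h]$ for $h\in\ch_\vr^+$, the term $\chi_\vr'(u)[h]\in\ch_\vr^-$ dropping out by Proposition~\ref{reduction1}(1). Hence
\[
I_\vr'(\phi_\vr^+)[h]=\cl_\vr'(\phi_\vr)[h]-\int_0^1\cl_\vr''\big(\phi_\vr^++\chi_\vr(\phi_\vr^+)+sv_\vr\big)[v_\vr,h]\,ds.
\]
The first term is $\le\|\cl_\vr'(\phi_\vr)\|_\vr\|h\|_\vr$. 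In the integrand, since $v_\vr\in\ch_\vr^-$ and $h\in\ch_\vr^+$, the indefinite bilinear part of $\cl_\vr''$ vanishes, leaving $-\Psi_\vr''(\psi)[v_\vr,h]$; and $(f_2)$, $(f_3)$ give $f(s)+f'(s)s\lesssim1+s^{p-2}$, so H\"older and Lemma~\ref{embeding lemma} yield $|\Psi_\vr''(\psi)[v_\vr,h]|\lesssim(1+\|\psi\|_\vr^{p-2})\|v_\vr\|_\vr\|h\|_\vr$. Along the segment $\psi=\phi_\vr^++\chi_\vr(\phi_\vr^+)+sv_\vr$ the norm $\|\psi\|_\vr$ is uniformly bounded: $\|\phi_\vr^+\|_\vr$ by (1), $\|\chi_\vr(\phi_\vr^+)\|_\vr^2\le\frac2{\vr^m}\int_M F(|\phi_\vr^+|)\,d\vol_\ig\lesssim\|\phi_\vr^+\|_\vr^2+\|\phi_\vr^+\|_\vr^p$ by Proposition~\ref{reduction1}(1), and $\|v_\vr\|_\vr$ by (2). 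Combining, $\|I_\vr'(\phi_\vr^+)\|_{(\ch_\vr^+)^*}\lesssim\|\cl_\vr'(\phi_\vr)\|_\vr+\|v_\vr\|_\vr\lesssim\|\cl_\vr'(\phi_\vr)\|_\vr\to0$.

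I expect the main obstacle to lie in part (1): one must check that every embedding constant borrowed from Lemma~\ref{embeding lemma} is genuinely independent of $\vr$ (which is the reason the statement is restricted to $\vr$ small), and that the absorption of the $\|\phi_\vr\|_\vr^2$-term, and the handling of the $o(\|\phi_\vr\|_\vr)$ remainders, proceed uniformly in $\vr$. Once (1) is available, (2) is essentially a one-line quantitative concavity estimate and (3) a routine mean-value expansion whose error term is controlled by (1) and (2).
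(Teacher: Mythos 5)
Your proposal is correct and follows essentially the same route as the paper: part (1) reproduces the boundedness argument of Lemma \ref{PS-condition} using the $\vr$-uniform embedding constants, part (2) exploits the strict concavity of $w\mapsto\cl_\vr(\phi_\vr^++w)$ on $\ch_\vr^-$ together with $\cl_\vr'(\phi_\vr^++\chi_\vr(\phi_\vr^+))\big|_{\ch_\vr^-}=0$, and part (3) combines (2) with the $C^2$-smoothness of $\cl_\vr$. The only (immaterial) difference is in (2), where you integrate $\Psi_\vr''\ge0$ along the segment from $\phi_\vr^++\chi_\vr(\phi_\vr^+)$ to $\phi_\vr$, whereas the paper uses the equivalent first-order monotonicity of the gradient of the convex functional $\Psi_\vr$.
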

\begin{proof}
For the boundedness, we recall that Lemma \ref{embeding lemma} implies that
the embedding constant for $\ch\hookrightarrow L^{p^*}$ is independent of
$\vr$, and hence the arguments in Lemma \ref{PS-condition} can be employed.

For $(2)$, let us first set $z_\vr=\phi_\vr^++\chi_\vr(\phi_\vr^+)$ and $v_\vr=\phi_\vr^--\chi_\vr(\phi_\vr^+)$. Then we have $v_\vr\in \ch_\vr^-$
and, by the definition of $\chi_\vr$,
\[
0=\cl_\vr'(z_\vr)[v_\vr]=-\inp{\chi_\vr(\phi_\vr^+)}{v_\vr}_\vr-\frac1{\vr^m}\Real\int_M
f( |z_\vr|)(z_\vr,v_\vr)d\vol_\ig.
\]
Since $\|\cl_\vr'(\phi_\vr)\|_\vr\to 0$ as $\vr\to0$, it follows that
\[
o(\|v_\vr\|_\vr)=\cl_\vr'(\phi_\vr)[v_\vr]=-\inp{\phi_\vr^-}{v_\vr}-\frac1{\vr^m}
\Real\int_M f (|\phi_\vr|)(\phi_\vr,v_\vr)d\vol_\ig.
\]
And hence, we get
\begin{\equ}\label{e1}
o(\|v_\vr\|_\vr)=\|v_\vr\|_\vr^2+\frac1{\vr^m}
\Real\int_M f( |\phi_\vr|)(\phi_\vr,v_\vr)d\vol_\ig
 -\frac1{\vr^m}\Real\int_M
f(|z_\vr|)(z_\vr,v_\vr)d\vol_\ig.
\end{\equ}
Since the map $\psi\to F(|\psi|)$ is convex by $(f_1)$, we have
\[
\frac1{\vr^m}
\Real\int_M f(|\phi_\vr|)(\phi_\vr,v_\vr)d\vol_\ig
-\frac1{\vr^m}\Real\int_M
f(|z_\vr|)(z_\vr,v_\vr)d\vol_\ig\geq 0.
\]
Thus, from \eqref{e1}, we can infer that $\|v_\vr\|_\vr\leq O\big( \|\cl_\vr'(\phi_\vr)\|_\vr \big)$
as $\vr\to0$.

In order to check $(3)$ we compute $I_\vr'(\phi_\vr^+)=\cl_\vr'\big(\phi_\vr^++\chi_\vr(\phi_\vr^+)\big)$,
which implies that $\|I_\vr'(\phi_\vr^+)\|_\vr\to0$ as $\vr\to0$ is a
direct consequence of $(2)$ and the $C^2$ smoothness of $\cl_\vr$.
\end{proof}

Next, let us introduce the functional $K_\vr: \ch_\vr^+\to\R$ by $K_\vr(u)=I_\vr'(u)[u]$. Then, it is clear that $K_\vr$ is $C^1$ and its derivative is given by the formula
\[
K_\vr'(u)[w]=I_\vr'(u)[w]+I_\vr''(u)[u,w]
\]
for $u,w\in\ch_\vr^+$. We also have $\msn_\vr=K_\vr^{-1}(0)\setminus\{0\}$.
Moreover, by \eqref{eee1}, there holds
\[
K_\vr'(u)[u]\leq 2 K_\vr(u)-\frac1{\vr^m}\int_M \frac{f(|\psi|)f'(|\psi|)|\psi|^3}{f(|\psi|)+f'(|\psi|)|\psi|}
 d\vol_\ig, \quad \text{for } u\in \ch_\vr^+
\]
where $\psi=u+\chi_\vr(u)$. By virtue of $(f_3)$, one checks easily that $\frac{f(s)f'(s)s}{f(s)+f'(s)s}\geq \frac\theta{\theta+1}f(s)$ for $s>0$. Thus the above estimate implies
\begin{\equ}\label{eee2}
K_\vr'(u)[u]\leq 2 K_\vr(u)-\frac\theta{(\theta+1)\vr^m}\int_M
f(|u+\chi_\vr(u)|)|u+\chi_\vr(u)|^2
 d\vol_\ig,
\end{\equ}
for $ u\in \ch_\vr^+$.
\begin{Prop}\label{estimate prop}
For the sequence $\{\phi_\vr\}$ in \eqref{assumption0}, there exists
$\{t_\vr\}\subset\R$ such that $t_\vr\phi_\vr^+\in\msn_\vr$ and
$|t_\vr-1|\leq O\big(\|I_\vr'(\phi_\vr^+)\|_\vr\big)$.
\end{Prop}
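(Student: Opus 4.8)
The plan is to take $t_\vr$ to be the unique \emph{positive} critical point of the fibering map $I_{\vr,\phi_\vr^+}(t)=I_\vr(t\phi_\vr^+)$, whose existence is guaranteed by \eqref{eeee1} and whose uniqueness by Lemma \ref{reduction2}; equivalently $t_\vr$ is the unique positive zero of $\rho_\vr(t):=K_\vr(t\phi_\vr^+)=I_\vr'(t\phi_\vr^+)[t\phi_\vr^+]$, and $t_\vr\phi_\vr^+\in\msn_\vr=K_\vr^{-1}(0)\setminus\{0\}$. To make sense of this one first records two $\vr$-uniform bounds on $\phi_\vr^+$: since $\cl_\vr(\phi_\vr)\le\max_{w\in\ch_\vr^-}\cl_\vr(\phi_\vr^++w)=I_\vr(\phi_\vr^+)$ by Proposition \ref{reduction1}$(1)$ and, because $F\ge0$ by $(f_1)$, $I_\vr(u)\le\tfrac12\|u\|_\vr^2$, the assumption \eqref{assumption0} gives $\|\phi_\vr^+\|_\vr^2\ge2I_\vr(\phi_\vr^+)\ge2c_1>0$; together with Lemma \ref{a1}$(1)$ this shows $\|\phi_\vr^+\|_\vr$ is bounded above and below uniformly in $\vr$, and in particular $\phi_\vr^+\ne0$.

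With this in hand the core of the argument is a $\vr$-uniform transversality estimate for $\rho_\vr$ near $t=1$. At $t=1$ one has $|\rho_\vr(1)|=|I_\vr'(\phi_\vr^+)[\phi_\vr^+]|\le\|I_\vr'(\phi_\vr^+)\|_\vr\|\phi_\vr^+\|_\vr\le C\|I_\vr'(\phi_\vr^+)\|_\vr\to0$ by Lemma \ref{a1}$(3)$. For the derivative, write $z_t=t\phi_\vr^++\chi_\vr(t\phi_\vr^+)$; using $\cl_\vr'(z_t)|_{\ch_\vr^-}\equiv0$ from Proposition \ref{reduction1}$(1)$ one checks $\cl_\vr'(z_t)[z_t]=\rho_\vr(t)$ and $\cl_\vr(z_t)=I_\vr(t\phi_\vr^+)$, whence the identity $\cl_\vr(z_t)-\tfrac12\cl_\vr'(z_t)[z_t]=\tfrac1{\vr^m}\int_M\big(\tfrac12 f(|z_t|)|z_t|^2-F(|z_t|)\big)\,d\vol_\ig$ together with $F\ge0$ gives $\tfrac1{\vr^m}\int_M f(|z_t|)|z_t|^2\,d\vol_\ig\ge2I_\vr(t\phi_\vr^+)-\rho_\vr(t)$. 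Feeding this into \eqref{eee2} (taken at $u=t\phi_\vr^+$, then divided by $t>0$) yields
\[
\rho_\vr'(t)=K_\vr'(t\phi_\vr^+)[\phi_\vr^+]\le\frac1t\Big(\big(2+\tfrac{\theta}{\theta+1}\big)\rho_\vr(t)-\tfrac{2\theta}{\theta+1}I_\vr(t\phi_\vr^+)\Big).
\]
In particular, at $t=1$, $\rho_\vr'(1)\le o(1)-\tfrac{2\theta}{\theta+1}c_1\le-c_0<0$ for $\vr$ small.

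To conclude I would upgrade this pointwise bound to the interval $[1-\delta_0,1+\delta_0]$ with $\delta_0>0$ independent of $\vr$. By Lemma \ref{embeding lemma} and $(f_1)$--$(f_3)$ the functionals $\cl_\vr,I_\vr,K_\vr$ are $C^2$ with first and second derivatives bounded on bounded sets uniformly in $\vr$ (for $\vr$ small); combined with the uniform bound on $\|\phi_\vr^+\|_\vr$ this makes $t\mapsto\rho_\vr(t)$ and $t\mapsto I_\vr(t\phi_\vr^+)$ uniformly Lipschitz on $[\tfrac12,\tfrac32]$. Since $\rho_\vr(1)\to0$ and $I_\vr(\phi_\vr^+)\ge c_1$, there are a fixed $\delta_0\in(0,\tfrac12)$ and a fixed threshold for which $|\rho_\vr(t)|$ is below the threshold and $I_\vr(t\phi_\vr^+)\ge c_1/2$ whenever $|t-1|\le\delta_0$ and $\vr$ is small; inserting these into the displayed inequality (using $2+\tfrac{\theta}{\theta+1}\le3$ and adjusting the threshold) gives $\rho_\vr'(t)\le-c_0$ on $[1-\delta_0,1+\delta_0]$. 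Hence $\rho_\vr$ is strictly decreasing there with slope $\le-c_0$; since $|\rho_\vr(1)|\le C\|I_\vr'(\phi_\vr^+)\|_\vr$ is eventually $<c_0\delta_0$, $\rho_\vr$ has a zero $\tilde t_\vr$ in $[1-\delta_0,1+\delta_0]$ with $c_0|\tilde t_\vr-1|\le|\rho_\vr(1)|$, i.e. $|\tilde t_\vr-1|\le O\big(\|I_\vr'(\phi_\vr^+)\|_\vr\big)$. By uniqueness of the positive zero of $\rho_\vr$ we get $t_\vr=\tilde t_\vr$, proving $t_\vr\phi_\vr^+\in\msn_\vr$ and $|t_\vr-1|\le O\big(\|I_\vr'(\phi_\vr^+)\|_\vr\big)$.

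The step I expect to be the main obstacle is precisely this last propagation of the transversality estimate from the single point $t=1$ to a $\vr$-independent interval around it: a soft implicit function theorem would only give $t_\vr\to1$, and obtaining the \emph{rate} $|t_\vr-1|=O(\|I_\vr'(\phi_\vr^+)\|_\vr)$ genuinely uses the $\vr$-uniform $C^2$-estimates of Lemma \ref{embeding lemma}, the uniform lower bound $\|\phi_\vr^+\|_\vr\ge\sqrt{2c_1}$, and a careful choice of the neighbourhood on which both $\rho_\vr(t)$ stays small and $I_\vr(t\phi_\vr^+)$ stays bounded away from zero.
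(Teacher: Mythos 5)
Your proposal is correct and follows essentially the same route as the paper: both reduce to showing that $t\mapsto K_\vr(t\phi_\vr^+)$ has uniformly negative derivative on a fixed ($\vr$-independent) neighbourhood of $t=1$ via the inequality \eqref{eee2}, then locate its unique positive zero there and read off $|t_\vr-1|\lesssim |K_\vr(\phi_\vr^+)|\lesssim\|I_\vr'(\phi_\vr^+)\|_\vr$ from the slope bound. The only (cosmetic) difference is that the paper freezes the integral term at $t=1$ and absorbs the $t$-dependence into a Taylor error $C|t-1|$, whereas you keep the $t$-dependence and control it with the same uniform Lipschitz estimates; the step you flag as the main obstacle is handled in the paper by exactly the uniform boundedness of $\eta_\vr'$ on bounded intervals that you invoke.
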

\begin{proof}
We begin with the observation that $F(s)\leq\frac1{\theta+2}f(s)s^2$ for all $s\geq0$. Due to the condition \eqref{assumption0} and Lemma \ref{a1} $(3)$, it follows directly that
\begin{\equ}\label{o1}
\liminf_{\vr\to0}\frac1{\vr^m}\int_M f \big(| \phi_\vr^++\chi_\vr(\phi_\vr^+)|\big)| \phi_\vr^++\chi_\vr(\phi_\vr^+)|^2d\vol_\ig
\geq c_0
\end{\equ}
for some constant $c_0>0$. Let us set $\eta_\vr:(0,\infty)\to\R$ by
$\eta_\vr(t)=K_\vr(t\phi_\vr^+)$. One easily checks that
$t\eta_\vr'(t)=K_\vr'(t\phi_\vr^+)[t\phi_\vr^+]$ for all $t>0$. Hence, by \eqref{eee2} and Taylor's expansion, we get
\begin{\equ}\label{e2}
t\eta_\vr'(t)\leq 2 \eta_\vr(1)-\frac\theta{(\theta+1)\vr^m}\int_M
f \big(| \phi_\vr^++\chi_\vr(\phi_\vr^+)|\big)| \phi_\vr^++\chi_\vr(\phi_\vr^+)|^2d\vol_\ig + C|t-1|
\end{\equ}
for $t$ close to $1$ with some $C>0$ independent of $\vr$. Here we have used the uniform boundedness of $\eta_\vr'(t)$ on bounded intervals.

Noticing that $\eta_\vr(1)=I_\vr'(\phi_\vr^+)[\phi_\vr^+]\to0$ as $\vr\to0$,
we conclude from \eqref{o1} and \eqref{e2} that there exists a small constant $\de>0$ such that
\[
\eta_\vr'(t)\leq-\de \text{ for all } t\in(1-\de,1+\de) \text{ and } \vr \text{ small enough}.
\]
Moreover, notice that $K_\vr(u)$ equals to the value of $I_{\vr,u}'(1)$, it follows from Lemma \ref{reduction2} that $\eta_\vr(1-\de)>0$ and
$\eta_\vr(1+\de)<0$. Then, by the Inverse Function Theorem, $t_\vr:=\eta_\vr^{-1}(0)$
exists and
\[
u_\vr:=t_\vr\phi_\vr^+\in\msn_\vr\cap \R^+\phi_\vr^+
\]
is well-defined for all $\vr$ small enough. Furthermore, since $|\eta_\vr'(t)^{-1}|$
is bounded by a constant, say $c_\de>0$, on $(1-\de,1+\de)$, we consequently get
\[
\|u_\vr-\phi_\vr^+\|_\vr=
|\eta_\vr^{-1}(0)-\eta_\vr^{-1}(K_\vr(\phi_\vr^+))|\cdot\|\phi_\vr^+\|_\vr
\leq c_\de |K_\vr(\phi_\vr^+)|\cdot\|\phi_\vr^+\|_\vr.
\]
Now the conclusion follows from $K_\vr(\phi_\vr^+)\leq O\big(\|I_\vr'(\phi_\vr^+)\|_\vr\big)$.
\end{proof}

\begin{Cor}\label{key corollary}
For the sequence $\{\phi_\vr\}$ in \eqref{assumption0}, there exists
$\{u_\vr\}$ such that $u_\vr\in\msn_\vr$ and
$\|\phi_\vr-u_\vr-\chi_\vr(u_\vr)\|_\vr\leq O(\|\cl_\vr'(\phi_\vr)\|_\vr)$.
Moreover,
\[
\max_{t>0}I_\vr(t\phi_\vr^+)=I_\vr(u_\vr)\leq \cl_\vr(\phi_\vr)
+O\big( \|\cl_\vr'(\phi_\vr)\|_\vr^2 \big).
\]
\end{Cor}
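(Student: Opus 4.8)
The plan is to combine the three preceding results — Lemma \ref{a1}, Proposition \ref{estimate prop}, and the reduced-functional structure — to locate $\phi_\vr$ near a point of $\msn_\vr$ and then compare energies. First I would produce the point $u_\vr\in\msn_\vr$. By Lemma \ref{a1}$(3)$ the sequence $\phi_\vr^+$ is an approximate critical point of $I_\vr$, so Proposition \ref{estimate prop} applies and furnishes $t_\vr$ with $t_\vr\phi_\vr^+\in\msn_\vr$ and $|t_\vr-1|\leq O(\|I_\vr'(\phi_\vr^+)\|_\vr)$. Set $u_\vr:=t_\vr\phi_\vr^+$. Since $\|\phi_\vr^+\|_\vr$ is uniformly bounded (Lemma \ref{a1}$(1)$) and $\chi_\vr$ is $C^1$ with $\|\chi_\vr(u)\|_\vr^2\lesssim\int_M F(|u|)$ controlled on bounded sets, the estimate
\[
\|\phi_\vr-u_\vr-\chi_\vr(u_\vr)\|_\vr\leq \|\phi_\vr^+-u_\vr\|_\vr+\|\phi_\vr^--\chi_\vr(\phi_\vr^+)\|_\vr+\|\chi_\vr(\phi_\vr^+)-\chi_\vr(u_\vr)\|_\vr
\]
follows: the first term is $O(\|I_\vr'(\phi_\vr^+)\|_\vr)$ by Proposition \ref{estimate prop}, the second is $O(\|\cl_\vr'(\phi_\vr)\|_\vr)$ by Lemma \ref{a1}$(2)$, and the third is controlled by the Lipschitz bound on $\chi_\vr$ times $\|\phi_\vr^+-u_\vr\|_\vr$. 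Since by Lemma \ref{a1}$(3)$ and the $C^2$ smoothness of $\cl_\vr$ one has $\|I_\vr'(\phi_\vr^+)\|_\vr\leq O(\|\cl_\vr'(\phi_\vr)\|_\vr)$, all terms collapse to $O(\|\cl_\vr'(\phi_\vr)\|_\vr)$, giving the first assertion.

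For the energy inequality, the identity $\max_{t>0}I_\vr(t\phi_\vr^+)=I_\vr(u_\vr)$ is immediate: Lemma \ref{reduction2} shows $I_{\vr,\phi_\vr^+}(t)=I_\vr(t\phi_\vr^+)$ has a unique positive critical point which is its global maximum, and $u_\vr=t_\vr\phi_\vr^+$ is precisely that point because $u_\vr\in\msn_\vr$ means $I_\vr'(u_\vr)[u_\vr]=0$, i.e. $I_{\vr,\phi_\vr^+}'(t_\vr)=0$. For the quantitative bound I would Taylor-expand $\cl_\vr$ around $\phi_\vr$ in the direction of $\zeta_\vr:=u_\vr+\chi_\vr(u_\vr)-\phi_\vr$:
\[
\cl_\vr(u_\vr+\chi_\vr(u_\vr))=\cl_\vr(\phi_\vr)+\cl_\vr'(\phi_\vr)[\zeta_\vr]+\tfrac12\cl_\vr''(\xi_\vr)[\zeta_\vr,\zeta_\vr]
\]
for some $\xi_\vr$ on the segment. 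The linear term is bounded by $\|\cl_\vr'(\phi_\vr)\|_\vr\,\|\zeta_\vr\|_\vr=O(\|\cl_\vr'(\phi_\vr)\|_\vr^2)$ by the first assertion; the quadratic term is $O(\|\zeta_\vr\|_\vr^2)=O(\|\cl_\vr'(\phi_\vr)\|_\vr^2)$ since $\cl_\vr''$ is uniformly bounded on the relevant bounded set (the operator norm of $\cl_\vr''$ is controlled via Lemma \ref{embeding lemma} and $(f_2)$, uniformly in $\vr$ small). Since $I_\vr(u_\vr)=\cl_\vr(u_\vr+\chi_\vr(u_\vr))$ by definition, this is exactly the claimed estimate.

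The main obstacle is the uniform-in-$\vr$ bookkeeping: every constant hidden in $O(\cdot)$ must be shown $\vr$-independent, which rests entirely on the $\vr$-uniform embedding $\ch\hookrightarrow L^q$ of Lemma \ref{embeding lemma} and on the $\vr$-uniform Lipschitz/$C^1$ bounds for $\chi_\vr$ on bounded sets (the latter extracted from the Implicit Function Theorem applied uniformly, together with the a priori bound $\|\chi_\vr(u)\|_\vr^2\lesssim \vr^{-m}\int_M F(|u|)\,d\vol_\ig$ from Proposition \ref{reduction1}$(1)$). Once those uniformities are in hand the argument is a routine chain of triangle inequalities and a second-order Taylor estimate; the delicate point is simply making sure the Taylor remainder for $\cl_\vr$ and the Lipschitz constant for $\chi_\vr$ do not degenerate as $\vr\to0$.
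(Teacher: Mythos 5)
Your proposal is correct and follows essentially the same route as the paper: take $u_\vr=t_\vr\phi_\vr^+$ from Proposition \ref{estimate prop}, control $\|\phi_\vr-u_\vr-\chi_\vr(u_\vr)\|_\vr$ by the same three-term triangle inequality using Lemma \ref{a1} and the Lipschitz bound on $\chi_\vr$, and conclude the energy estimate by a second-order Taylor expansion. The only (cosmetic) difference is that you expand $\cl_\vr$ around $\phi_\vr$ and bound the linear term by $\|\cl_\vr'(\phi_\vr)\|_\vr\,\|\zeta_\vr\|_\vr$, whereas the paper expands around $u_\vr+\chi_\vr(u_\vr)$ and kills the linear term exactly via $I_\vr'(u_\vr)[\phi_\vr^+-u_\vr]=0$; both require the same uniform bound on $\cl_\vr''$ for the remainder, so the arguments are equivalent.
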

\begin{proof}
To see this, let $u_\vr=t_\vr\phi_\vr^+$ be as in Proposition \ref{estimate prop}
and set $z_\vr=\phi_\vr^++\chi_\vr(\phi_\vr^+)$. Then one obtains from Lemma \ref{a1} that
\begin{\equ}\label{e3}
\aligned
\|\phi_\vr-u_\vr-\chi_\vr(u_\vr)\|_\vr&\leq \|\phi_\vr-z_\vr\|_\vr +|t_\vr-1|\cdot\|\phi_\vr^+\|_\vr + \|\chi_\vr(\phi_\vr^+)-\chi_\vr(u_\vr)\|_\vr \\
&\leq O\big( \|\cl_\vr'(\phi_\vr)\|_\vr \big)+O\big( \|I_\vr'(\phi_\vr^+)\|_\vr \big)
\endaligned
\end{\equ}
where we have used an easily checked inequality
\[
\|\chi_\vr(\phi_\vr^+)-\chi_\vr(u_\vr)\|_\vr\leq \|\chi_\vr'(\tau \phi_\vr^+)\|_{\ch_\vr^+\to\ch_\vr^-}\cdot
\|\phi_\vr^+-u_\vr\|_\vr= O(|t_\vr-1|)
\]
for some $\tau$ between $t_\vr$ and $1$.
Observing that $I_\vr'(\phi_\vr^+)=\cl_\vr'(z_\vr)$, and using the $C^2$ smoothness
of $\cl_\vr$, we have
\[
\|I_\vr'(\phi_\vr^+)\|_\vr=\|\cl_\vr'(z_\vr)\|_\vr\leq \|\cl_\vr'(\phi_\vr)\|_\vr
+O(\|\phi_\vr-z_\vr\|_\vr)=O(\|\cl_\vr'(\phi_\vr)\|_\vr).
\]
This together with \eqref{e3} implies
\[
\|\phi_\vr-u_\vr-\chi_\vr(u_\vr)\|_\vr\leq O(\|\cl_\vr'(\phi_\vr)\|_\vr).
\]

Now, by Talyor's expansion, we can obtain
\[
\aligned
\cl_\vr(\phi_\vr)&=\cl_\vr(u_\vr+\chi_\vr(u_\vr))
+\cl_\vr'(u_\vr+\chi_\vr(u_\vr))[\phi_\vr-u_\vr-\chi_\vr(u_\vr)]
+O\big(\|\cl_\vr'(\phi_\vr)\|_\vr^2\big)  \\
&= I_\vr(u_\vr)+I_\vr'(u_\vr)[\phi_\vr^+-u_\vr]+O\big(\|\cl_\vr'(\phi_\vr)\|_\vr^2\big) .
\endaligned
\]
Since $u_\vr=t_\vr\phi_\vr^+\in\msn_\vr$, we have $I_\vr'(u_\vr)[\phi_\vr^+-u_\vr]\equiv0$
and this implies the last estimate.
\end{proof}

\section{Proof of the main results}\label{Proof of the main results sec}
\subsection{The equation on Euclidean spaces: the bubbles}\label{limit equ sec}

We consider solutions to the equation
\begin{\equ}\label{limit equ}
\tilde D_{\ig_{\R^m}}\psi + a \om_\C\cdot_{\ig_{\R^m}}\psi = f(|\psi|)\psi
\quad \text{on } \R^m
\end{\equ}
belonging to the class $W^{\frac12,2}(\R^m,\tilde\mbs(\R^m))$, where
\[
\tilde\mbs(\R^m)=\left\{
\aligned
&\mbs(\R^m)\op\mbs(\R^m) &\quad &  m \text{ is odd}, \\
&\qquad \mbs(\R^m) &\quad & m \text{ is even},
\endaligned \right.
\]
$\tilde D_{\ig_{\R^m}}=D_{\ig_{\R^m}}\op -D_{\ig_{\R^m}}$
if $m$ is odd and $\tilde D_{\ig_{\R^m}}=D_{\ig_{\R^m}}$ if $m$ is even.
These solutions will correspond to "bubbles" or test spinors for our variational problem. We also assume the nonlinear function $f$ satisfies conditions $(f_1)$-$(f_3)$.

First of all, let us set $A=\tilde D_{\ig_{\R^m}}+a\om_\C\cdot_{\ig_{\R^m}}$. By a straightforward calculation we see that $A$ is a self-adjoint operator on $L^2$ with spectrum $Spec(A)=(-\infty,-a]\cup[a,+\infty)$. Following Amann \cite{Amann} let $(E_\lm)_{\lm\in\R}$ be the spectral resolution of $A$ and define the orthogonal projections by
\[
P=\int_{-\infty}^0 d E_\lm, \quad  Q=\int_0^{\infty} d E_\lm.
\]
Then the decomposition of $\ce=W^{\frac12,2}(\R^m,\tilde\mbs(\R^m))=\ce^+\op\ce^-$ is induced by
\[
\ce^-=\ce\cap P (L^2) \quad \text{and} \quad \ce^+=\ce\cap Q (L^2).
\]
We introduce the following operators
\[
S=\int_{-\infty}^0 |\lm|^{\frac12}d E_\lm \quad \text{and} \quad
T=\int_{0}^\infty |\lm|^{\frac12} d E_\lm.
\]
and the new inner product on $\ce$
\[
\inp{\psi}{\va}=\Real\big( (S+T)\psi, (S+T)\va \big)_2, \quad
\psi,\va\in\ce
\]
with $\|\cdot\|$ denoting the corresponding norm. We easily see that
\eqref{limit equ} is the Euler-Lagrange equation of the functional
\begin{\equ}\label{limit equ functional}
\Phi(\psi)=\frac12\big(\|Q\psi\|^2-\|P\psi\|^2\big)
-\int_{\R^m}F(|\psi|)dx.
\end{\equ}

\begin{Lem}\label{H5}
If $\{\psi_n\}\subset \ce$ is a bounded sequence such that
\[
\Phi'(\psi_n)\to0 \quad \text{and} \quad
\liminf_{n\to\infty}\int_{\R^m}f(|\psi_n|)|\psi_n|^2dx>0.
\]
then there exists $\psi\neq0$ with $\Phi'(\psi)=0$.
\end{Lem}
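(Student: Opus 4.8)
The strategy is a concentration–compactness argument exploiting the translation invariance of $\Phi$ on $\R^m$. Since $\{\psi_n\}$ is bounded in $\ce=W^{1/2,2}(\R^m,\tilde\mbs(\R^m))$, after passing to a subsequence we may assume $\psi_n\rightharpoonup\psi$ weakly in $\ce$; the only obstruction to $\psi$ being the desired nontrivial critical point is that the "mass" of $\psi_n$ may escape to infinity, and the heart of the proof is to recenter this mass by suitable translations before taking the weak limit. The step that needs the most care is the non-vanishing alternative, which requires Lions' vanishing lemma together with the local Rellich-type compactness $W^{1/2,2}(\R^m)\hookrightarrow L^q_{\loc}$ for $q<m^*=\tfrac{2m}{m-1}$, and the fact that the entire strongly indefinite structure ($A$, the projections $P,Q$, the operators $S,T$ and the norm $\|\cdot\|$) commutes with translations, so that a recentered sequence is again an almost critical sequence.

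First I would rule out vanishing. Fix the exponent $p\in(2,m^*)$ from $(f_2)$ and set $\delta_0:=\limsup_{n\to\infty}\sup_{y\in\R^m}\int_{B_1(y)}|\psi_n|^p\,dx$. If $\delta_0=0$, then by Lions' vanishing lemma in $W^{1/2,2}$ one has $\psi_n\to0$ strongly in $L^q(\R^m)$ for every $q\in(2,m^*)$. On the other hand, $(f_1)$–$(f_2)$ give, for each $\delta>0$, a constant $c_\delta>0$ with $f(s)s^2\le\delta s^2+c_\delta s^p$ for all $s\ge0$; integrating and using the uniform $L^2$-bound on $\psi_n$ together with $|\psi_n|_p\to0$ yields $\limsup_{n\to\infty}\int_{\R^m}f(|\psi_n|)|\psi_n|^2\,dx\le\delta\sup_n|\psi_n|_2^2$, and letting $\delta\to0$ contradicts the hypothesis $\liminf_n\int f(|\psi_n|)|\psi_n|^2>0$. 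Hence $\delta_0>0$.

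Next, choose $y_n\in\R^m$ along a subsequence with $\int_{B_1(y_n)}|\psi_n|^p\,dx\ge\delta_0/2$, and set $\tilde\psi_n:=\psi_n(\cdot+y_n)$. By the translation invariance noted above, $\{\tilde\psi_n\}$ is again bounded in $\ce$, satisfies $\Phi'(\tilde\psi_n)\to0$, and obeys $\int_{B_1(0)}|\tilde\psi_n|^p\,dx\ge\delta_0/2$. Passing to a further subsequence, $\tilde\psi_n\rightharpoonup\psi$ weakly in $\ce$ and $\tilde\psi_n\to\psi$ in $L^q(B_1(0))$ for $q\in[2,m^*)$ and a.e.; consequently $\int_{B_1(0)}|\psi|^p\,dx\ge\delta_0/2>0$, so $\psi\neq0$.

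It remains to verify $\Phi'(\psi)=0$. Fix $\varphi\in C_c^\infty(\R^m,\tilde\mbs(\R^m))$ and put $K:=\supp\varphi$. The quadratic part $\xi\mapsto\inp{Q\xi-P\xi}{\varphi}$ is a bounded linear functional on $\ce$, hence weakly continuous, so $\inp{Q\tilde\psi_n-P\tilde\psi_n}{\varphi}\to\inp{Q\psi-P\psi}{\varphi}$. For the nonlinear part, on $K$ we have $\tilde\psi_n\to\psi$ in $L^p(K)$ and a.e., while $|f(|\tilde\psi_n|)(\tilde\psi_n,\varphi)|\le C(|\tilde\psi_n|+|\tilde\psi_n|^{p-1})|\varphi|$ with $\{|\tilde\psi_n|^{p-1}\}$ bounded in $L^{p/(p-1)}(K)$; thus these integrands are uniformly integrable on $K$ and Vitali's convergence theorem gives $\int_K f(|\tilde\psi_n|)(\tilde\psi_n,\varphi)\,dx\to\int_K f(|\psi|)(\psi,\varphi)\,dx$. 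Therefore $\Phi'(\psi)[\varphi]=\lim_{n\to\infty}\Phi'(\tilde\psi_n)[\varphi]=0$, and since $C_c^\infty(\R^m,\tilde\mbs(\R^m))$ is dense in $\ce$, we conclude $\Phi'(\psi)=0$. $\Box$
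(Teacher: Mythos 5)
Your proposal is correct and follows essentially the same route as the paper: rule out vanishing via Lions' lemma combined with the hypothesis $\liminf_n\int f(|\psi_n|)|\psi_n|^2>0$, recenter by translations using the translation invariance of $A$, obtain a nontrivial weak limit from the local compact embedding, and pass to the limit in $\Phi'$. The only cosmetic differences are that you track the concentration function in $L^p$ rather than $L^2$ and spell out the limit passage in the nonlinear term via Vitali, which the paper leaves implicit.
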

\begin{proof}
Let $B_R^0$ denote the open ball of radius $R$ centered at the origin. If
\[
\lim_{n\to\infty}\sup_{y\in \R^m}\int_{y+B_R^0} |\psi_n|^2dx=0, \quad \forall R>0,
\]
then a result of Lions \cite{Lions} implies $\psi_n\to0$ in $L^{q}$ for all $q\in(2,m^*)$
and therefore $\int_{\R^m}f(|\psi_n|)|\psi_n|^2dx\to0$, which is a contradiction.

Passing to a subsequence, we have
\[
\liminf_{n\to\infty}\int_{y_n+B_R^0}|\psi_n|^2dx>0
\]
for some $R>0$ and $\{y_n\}\subset\R^m$. Using the invariance of the operator $A$
under translations, we can find $R>0$ and a new sequence $\{\tilde\psi_n\}$ such that
\[
\Phi'(\tilde\psi_n)\to0 \quad \text{and} \quad
\liminf_{n\to\infty}\int_{B_R^0}|\tilde\psi_n|^2 dx>0.
\]
Up to a subsequence if necessary, we have $\tilde\psi_n\rightharpoonup \psi$
and the compact embedding $\ce\hookrightarrow L^2_{loc}$ shows that $\psi\neq0$. By taking the limit in $\Phi'(\tilde\psi_n)\to0$, we obtain $\Phi'(\psi)=0$ as desired.
\end{proof}

\begin{Cor}
There exists a nontrivial solution $\psi\in\ce$ to Eq. \eqref{limit equ}.
\end{Cor}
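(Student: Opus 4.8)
The plan is to obtain the nontrivial solution as a ground state, i.e. a minimizer of the strongly indefinite functional $\Phi$ over a suitable minimax class, and then apply Lemma \ref{H5} to produce a genuine critical point. Concretely, I would mimic the finite-$\vr$ reduction already carried out in Section \ref{Variational settings-sec}: for each $u\in\ce^+\setminus\{0\}$ the map $w\mapsto\Phi(u+w)$ on $\ce^-$ is strictly concave and anti-coercive (because $F\geq0$ and $F$ is convex with $F''(s)=f'(s)s+f(s)>0$), hence has a unique maximizer $\chi(u)$, and $\chi\in C^1$ by the implicit function theorem. This gives the reduced functional $I(u)=\Phi(u+\chi(u))$ on $\ce^+$, which has a mountain-pass geometry thanks to $(f_1)$--$(f_2)$ near $0$ and to the superquadratic growth from $(f_3)$ at infinity.

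Next I would define the minimax level
\[
c_\ast=\inf_{\ga\in\Ga}\max_{t\in[0,1]}I(\ga(t)),\qquad
\Ga=\{\ga\in C([0,1],\ce^+):\ga(0)=0,\ I(\ga(1))<0\},
\]
and invoke the mountain-pass theorem to get a Palais--Smale sequence $\{u_n\}\subset\ce^+$ for $I$ at level $c_\ast>0$. By the analogue of Proposition \ref{reduction1}(2), $\{\psi_n\}:=\{u_n+\chi(u_n)\}$ is a $(P.S.)_{c_\ast}$-sequence for $\Phi$. The standard computation $\Phi(\psi_n)-\tfrac12\Phi'(\psi_n)[\psi_n]=\int_{\R^m}\big(\tfrac12 f(|\psi_n|)|\psi_n|^2-F(|\psi_n|)\big)dx\to c_\ast>0$ together with $F(s)\leq\tfrac1{\theta+2}f(s)s^2$ forces $\liminf_n\int_{\R^m}f(|\psi_n|)|\psi_n|^2\,dx>0$. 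It remains to check that $\{\psi_n\}$ is bounded in $\ce$: this follows from the same estimate as in Lemma \ref{PS-condition}, using the inequality $f(s)s\leq\de s+c_\de(f(s)s^2)^{(p-1)/p}$ and the Sobolev embedding $\ce\hookrightarrow L^q(\R^m)$ for $q\in[2,m^*]$ (note $p<m^*$), with the negative-definiteness of $\Phi$ on $\ce^-$ controlling $\|\psi_n^-\|$. With a bounded $(P.S.)$-sequence satisfying the non-vanishing lower bound, Lemma \ref{H5} yields $\psi\neq0$ with $\Phi'(\psi)=0$, which is the desired nontrivial solution of \eqref{limit equ}.

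The main obstacle is not the variational scheme — which is a verbatim adaptation of the compact-manifold reduction — but rather the loss of compactness on $\R^m$: one cannot expect $\psi_n\to\psi$ strongly, and the weak limit could a priori be zero. This is exactly what Lemma \ref{H5} is designed to circumvent, via Lions' concentration-compactness lemma and translation invariance of $A$ to recenter the sequence. So the only real work is (i) verifying the mountain-pass geometry for the \emph{reduced} functional $I$ on $\R^m$ (the near-$0$ lower bound and the existence of a point with $I<0$, both coming from $(f_1)$--$(f_3)$ and Sobolev), and (ii) the uniform boundedness of the $(P.S.)$-sequence; both are routine given the tools already assembled in the excerpt. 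A secondary point to state carefully is regularity, so that the solution is classical; but for the corollary as stated it suffices to produce a nontrivial $\psi\in\ce$.
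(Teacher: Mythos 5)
Your argument is correct and is essentially the route the paper takes: the paper simply outsources the reduction-plus-mountain-pass machinery you spell out to \cite[Theorem 2.1]{BJS}, and combines it with exactly the two ingredients you identify, namely the boundedness argument of Lemma \ref{PS-condition} and the non-vanishing/recentering step of Lemma \ref{H5}. No gaps.
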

\begin{proof}
By Lemma \ref{H5} and the boundedness argument in Lemma \ref{PS-condition}, this is a direct consequence of \cite[Theorem 2.1]{BJS}.
\end{proof}

Now we may define
\begin{\equ}\label{C0}
\mu_0=\inf\big\{ \Phi(\psi): \, \psi\in\ce\setminus\{0\} \text{ s.t. }
\Phi'(\psi)=0 \big\}.
\end{\equ}
Since the super-quadratic part in \eqref{limit equ functional} has subcritical growth at infinity, one easily sees that $\mu_0>0$ is attained. In particular, analogously to Proposition \ref{reduction1} and Lemma \ref{reduction2}-Corollary \ref{key corollary}, the following reduction principle holds.

\begin{Lem}\label{reduction limit equ}
\begin{itemize}
\item[$(1)$] There exists a $C^1$ map $h:\ce^+\to\ce^-$ such that
$\displaystyle\Phi(u+h(u))=\max_{v\in\ce^-}\Phi(u+v)$.

\item[$(2)$] The critical points of the functional $J(u)=\Phi(u+h(u))$ and those of $\Phi$ are in one-to-one correspondence via the map $u\mapsto u+h(u)$.

\item[$(3)$] For each $u\in\ce^+\setminus\{0\}$, the map $t\mapsto J(tu)$ has only one maximum on $(0,+\infty)$ and
$\displaystyle \mu_0=\inf_{u\in\ce^+\setminus\{0\}}
\max_{t>0}J(tu)$.

\item[$(4)$] For any bounded sequence $\{z_n\}\in\ce$ such that
$\Phi(z_n)\to c>0$ and $\Phi'(z_n)\to0$, there holds
\[
\max_{t>0}J(t z_n^+)\leq \Phi(z_n)+O(\|\Phi'(z_n)\|^2).
\]
\end{itemize}
\end{Lem}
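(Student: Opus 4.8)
The plan is to recognize that Lemma~\ref{reduction limit equ} is precisely the non-compact counterpart of the reduction performed for $\cl_\vr$ in Section~\ref{Variational settings-sec}, and to transcribe Proposition~\ref{reduction1}, Lemma~\ref{reduction2}, Proposition~\ref{estimate prop} and Corollary~\ref{key corollary} with $\cl_\vr$ replaced by $\Phi$; the point is that the reduction itself uses no compactness — only $F\ge0$, the strict convexity $F''(s)=f'(s)s+f(s)>0$, the $C^2$-regularity of $\Phi$ (a consequence of $(f_1)$--$(f_2)$ and the \emph{bounded} embedding $\ce=W^{1/2,2}(\R^m,\tilde\mbs(\R^m))\hookrightarrow L^q$, $q\in[2,m^*]$), and the super-quadratic growth $F(s)\ge c\,s^{2+\theta}$ for large $s$ furnished by $(f_3)$. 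For part (1), fix $u\in\ce^+$; since $F\ge0$ one has $\Phi(u+v)\le\tfrac12\|u\|^2-\tfrac12\|v\|^2$, so $v\mapsto\Phi(u+v)$ is anti-coercive on $\ce^-$, and since $\psi\mapsto\int_{\R^m}F(|\psi|)\,dx$ is strictly convex it is also strictly concave on $\ce^-$. Hence a unique maximizer $h(u)$ exists, is characterized by $\Phi'(u+h(u))|_{\ce^-}=0$, and depends on $u$ in a $C^1$ fashion by the Implicit Function Theorem, exactly as in Proposition~\ref{reduction1}(1); comparing $\Phi(u+h(u))\ge\Phi(u)$ with $\Phi(u+h(u))\le\tfrac12\|u\|^2-\tfrac12\|h(u)\|^2$ also gives $\|h(u)\|^2\le2\int_{\R^m}F(|u|)\,dx$. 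For part (2), $J'(u)[w]=\Phi'(u+h(u))[w]$ for $w\in\ce^+$ (because $h'(u)w\in\ce^-$) while $J'(u)[w]=0$ automatically for $w\in\ce^-$, so $J'(u)=0\Leftrightarrow\Phi'(u+h(u))=0$; conversely, if $\Phi'(z)=0$ then $\Phi'(z)|_{\ce^-}=0$, so the uniqueness in (1) forces $z^-=h(z^+)$ and $z^+$ is a critical point of $J$. This gives the claimed bijection $u\mapsto u+h(u)$.

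For part (3), set $I_u(t)=J(tu)$. Using $I_u'(t)=\Phi'(tu+h(tu))[u]$ one gets $I_u(0)=I_u'(0)=0$ and $I_u''(0)=\|u\|^2>0$, and the computation of Lemma~\ref{reduction2} (resting on the algebraic inequality $f(s)f'(s)s\le(f(s)+f'(s)s)^2$) yields $I_u'(t)=0\Rightarrow I_u''(t)<0$ for $t>0$, so $t\mapsto J(tu)$ has at most one critical point on $(0,\infty)$. The estimate $J(tu)\ge\Phi(tu)\ge\tfrac{(1-\de)t^2}2\|u\|^2-C_\de t^p\|u\|^p$ shows it is positive for small $t$, and the super-quadratic bound from $(f_3)$ forces it to tend to $-\infty$ as $t\to\infty$; hence the maximizer $t_u>0$ exists, is the unique maximum, and depends continuously on $u$. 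Consequently $\inf_{u\in\ce^+\setminus\{0\}}\max_{t>0}J(tu)=\inf_{\msn}J$, where, as in Lemma~\ref{reduction2}, $\msn=\{u\in\ce^+\setminus\{0\}:J'(u)[u]=0\}$ is a codimension-one $C^1$ submanifold that is a natural constraint for $J$. The equality $\inf_{\msn}J=\mu_0$ is then obtained exactly as for \eqref{ga-vr}: the minimizer of \eqref{C0} (which exists by the Corollary following Lemma~\ref{H5}) has its $Q$-part in $\msn$ by (2), giving $\inf_{\msn}J\le\mu_0$; conversely a minimizing sequence on $\msn$, regularized by Ekeland's principle and the natural-constraint property, is a bounded $(P.S.)$-sequence for $J$ at level $\inf_{\msn}J>0$, so $z_n:=u_n+h(u_n)$ is a bounded $(P.S.)$-sequence for $\Phi$ with $\Phi'(z_n)[z_n]=0$ and hence (from $F\le\tfrac1{\theta+2}f(s)s^2$) $\liminf\int f(|z_n|)|z_n|^2\,dx>0$; Lemma~\ref{H5} produces a nontrivial critical point $\psi$ of $\Phi$, and a Fatou/weak lower semicontinuity argument for the Nehari-type functional $\psi\mapsto\Phi(\psi)-\tfrac12\Phi'(\psi)[\psi]=\int(\tfrac12f(|\psi|)|\psi|^2-F(|\psi|))\,dx$ gives $\Phi(\psi)\le\inf_{\msn}J$, whence $\mu_0\le\inf_{\msn}J$.

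Part (4) is Corollary~\ref{key corollary} transcribed to $\ce$. For a bounded $\{z_n\}\subset\ce$ with $\Phi(z_n)\to c>0$ and $\Phi'(z_n)\to0$: first, the convexity argument of Lemma~\ref{a1}(2) yields $\|z_n^--h(z_n^+)\|=O(\|\Phi'(z_n)\|)$, hence $\|J'(z_n^+)\|=\|\Phi'(z_n^++h(z_n^+))\|=O(\|\Phi'(z_n)\|)$; second, the argument of Proposition~\ref{estimate prop} — using $c>0$ together with $(f_3)$ to guarantee $\liminf\int f(|z_n^++h(z_n^+)|)|z_n^++h(z_n^+)|^2\,dx>0$ — produces $t_n$ with $t_nz_n^+\in\msn$ and $|t_n-1|=O(\|J'(z_n^+)\|)=O(\|\Phi'(z_n)\|)$; third, Taylor expanding $\Phi$ around $u_n+h(u_n)$ with $u_n=t_nz_n^+$ and using $J'(u_n)[z_n^+-u_n]=0$ (because $u_n\in\msn$) gives $\Phi(z_n)=J(u_n)+O(\|\Phi'(z_n)\|^2)=\max_{t>0}J(tz_n^+)+O(\|\Phi'(z_n)\|^2)$, which is the asserted bound.

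The one genuine difference from the compact case lies entirely in part (3): the identification $\inf_{\msn}J=\mu_0$ cannot use the compact embedding of Lemma~\ref{embeding lemma}, so one must instead run the concentration-compactness dichotomy of Lemma~\ref{H5} on a minimizing sequence over $\msn$ and recover a critical point at the right energy via weak lower semicontinuity of the Nehari-type functional. This is the step I expect to require the most care. Everything else — the construction and $C^1$-smoothness of $h$, the one-dimensional analysis of $t\mapsto J(tu)$, and the quantitative $O(\|\Phi'(z_n)\|)$- and $O(\|\Phi'(z_n)\|^2)$-estimates — is ``pointwise in $u$'' and transfers verbatim from Section~\ref{Variational settings-sec}, since it needs the embedding $\ce\hookrightarrow L^p$ only to be bounded, not compact.
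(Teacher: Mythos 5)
Your proposal is correct and follows essentially the same route as the paper, which offers no separate proof of this lemma but simply asserts that it holds ``analogously to Proposition \ref{reduction1} and Lemma \ref{reduction2}--Corollary \ref{key corollary}''; your transcription of those arguments to $\Phi$ on $\ce$ is exactly the intended reading. You also correctly isolate the only point where the analogy is not verbatim --- the identification $\inf_{\msn}J=\mu_0$ without a compact embedding --- and your resolution via Lemma \ref{H5} together with Fatou applied to the nonnegative Nehari-type integrand $\tfrac12 f(s)s^2-F(s)$ is sound.
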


From elliptic estimates and and bootstrap arguments, we deduce that (weak) solutions of \eqref{limit equ} with bounded energy are uniformly bounded in $\cap_{q\geq2}W^{1,q}(\R^m,\tilde\mbs(\R^m))$. Moreover, we have the following

\begin{Lem}\label{exponential}
Setting $\cb=\big\{\psi\in\ce:\, \Phi(\psi)=\mu_0,\, \Phi'(\psi)=0, |\psi(0)|=\max_{\R^m}|\psi|\big\}$, the following holds.
\begin{itemize}
\item[$(1)$] $\cb$ is compact in $W^{1,2}(\R^m,\tilde\mbs(\R^m))$.

\item[$(2)$] There exist $C,c>0$ such that
$|\psi(x)|\leq C \exp(-c|x|)$ for all $\psi\in\cb$.
\end{itemize}
\end{Lem}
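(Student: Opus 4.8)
The plan is to establish the two assertions about $\cb$ separately, using the variational characterization of $\mu_0$ together with the regularity and decay theory for the limiting Dirac equation \eqref{limit equ}.

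\textbf{Step 1: Uniform bounds and compactness in $W^{1,2}$.} First I would invoke the reduction principle (Lemma \ref{reduction limit equ}): every $\psi\in\cb$ is a ground state, so $\|\psi\|^2$ is controlled by $\mu_0$ via the Nehari-type identity (test \eqref{limit equ} against $\psi^+-\psi^-$ and use $(f_1)$–$(f_3)$ as in Lemma \ref{PS-condition}), whence $\cb$ is bounded in $\ce=W^{1/2,2}$. Elliptic estimates and bootstrapping (already quoted above) upgrade this to a uniform bound in $\bigcap_{q\geq2}W^{1,q}(\R^m,\tilde\mbs(\R^m))$, and in particular in $C^1$ by Sobolev embedding. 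To get sequential compactness in $W^{1,2}$, take $\psi_n\in\cb$. Since $|\psi_n(0)|=\max_{\R^m}|\psi_n|$ and the $L^\infty$-norms are bounded below (otherwise $\int f(|\psi_n|)|\psi_n|^2\to0$, forcing $\Phi(\psi_n)\to0$, contradicting $\mu_0>0$), the concentration-compactness/Lions argument of Lemma \ref{H5} applies with the concentration point fixed at the origin: no vanishing, and after passing to a subsequence $\psi_n\rightharpoonup\psi$ weakly in $\ce$ with $\psi\neq0$ and $\Phi'(\psi)=0$. A standard Brezis–Lieb / splitting argument (using subcriticality $p<m^*$ so that $\ce\hookrightarrow L^p$ is locally compact, plus the translation tightness just obtained) shows $\Phi(\psi)\leq\liminf\Phi(\psi_n)=\mu_0$, hence $\Phi(\psi)=\mu_0$ and the convergence is strong in $\ce$; then elliptic regularity promotes strong convergence to $W^{1,2}$ (indeed $W^{1,q}$ for all $q$), and passing to the limit in $|\psi_n(0)|=\max|\psi_n|$ gives $\psi\in\cb$.

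\textbf{Step 2: Uniform exponential decay.} The operator $A=\tilde D_{\ig_{\R^m}}+a\om_\C\cdot$ satisfies $A^2=-\De+a^2$ (by the flat Lichnerowicz formula, since $\om_\C$ is parallel and anticommutes appropriately with $\tilde D_{\ig_{\R^m}}$), and $a>0$. Applying $A$ to \eqref{limit equ} gives $(-\De+a^2)\psi=A\big(f(|\psi|)\psi\big)$, and since $f(s)s\to0$ as $s\to0$ (from $(f_1)$, $(f_2)$) while $|\psi(x)|\to0$ as $|x|\to\infty$ (a consequence of $\psi\in W^{1,q}$ for large $q$, uniformly over $\cb$), outside a fixed large ball the right-hand side is pointwise dominated by $\tfrac{a^2}{2}|\psi|$ in an appropriate integrated/Kato-inequality sense. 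A Kato inequality $\De|\psi|\geq -\mathrm{Re}(\psi/|\psi|,\De\psi)$ on $\{\psi\neq0\}$ then yields $\De|\psi|\geq \tfrac{a^2}{2}|\psi|$ outside that ball, and comparison with the barrier $Ce^{-c|x|}$, $c<a/\sqrt2$, via the maximum principle gives the pointwise bound $|\psi(x)|\leq Ce^{-c|x|}$. The uniformity of $C,c$ over $\psi\in\cb$ follows because the radius beyond which $|\psi|$ is small is uniform (by the uniform $W^{1,q}$ bounds from Step 1) and the boundary values on that ball are uniformly bounded (by the uniform $C^0$ bound).

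\textbf{Main obstacle.} The delicate point is Step 1, specifically verifying that the weak limit $\psi$ is again a \emph{ground state} (so that $\cb$ is closed) and that convergence is strong: one must rule out the weak limit losing energy to a ``bubble at infinity.'' This is handled by the tightness from fixing the maximum at the origin together with the subcritical splitting estimate — if a nonzero piece escaped to infinity it would itself be a nontrivial critical point contributing at least $\mu_0$ to $\liminf\Phi(\psi_n)$, exceeding the total $\mu_0$, which is impossible since $F\geq0$ forces each piece to carry positive energy. The exponential decay (Step 2) is then routine once the uniform $C^0$ and $L^q$ bounds are in hand, the only care being to phrase the Kato-inequality comparison so that the constants are manifestly $\psi$-independent.
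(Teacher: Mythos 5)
Your proposal follows essentially the same route as the paper's proof: compactness via non-vanishing at the origin, Fatou/energy quantization to identify the weak limit as a ground state, tail control for strong convergence in $\ce$, and the equation itself for the upgrade to $W^{1,2}$; the decay is the same subsolution-plus-barrier comparison after applying the Dirac operator a second time. The one point you should make explicit is the cancellation $\Real(\nabla f(|\psi|)\cdot_{\ig_{\R^m}}\psi,\psi)=0$ (skew-adjointness of Clifford multiplication by real vectors), since the gradient term on the right-hand side is \emph{not} pointwise dominated by $\tfrac{a^2}{2}|\psi|$ and only disappears after taking the real inner product with $\psi$ — this is exactly why the paper works with the identity $\De|\psi|^2=2\Real(\De\psi,\psi)+2|\nabla\psi|^2$ rather than with $\psi$ itself.
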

\begin{proof}
Clearly, $\cb$ is closed in $\ce$. We show that an arbitrary sequence $\psi_n\in\cb$, $n\in\N$, in $B$ has a convergent subsequence.

In fact, since $\{\psi_n\}$ is bounded, we have $\psi_n\rightharpoonup \psi_0$ along a subsequence in $\ce$ with clearly $\psi_0\in\cb$. Hence, one has $\psi_n\to\psi_0$ in $L_{loc}^q$ for $q\in[2,m^*)$. Moreover, by the fact that
\[
\mu_0=\int_{\R^m}\frac12f(|\psi_n|)|\psi_n|^2-F(|\psi_n|)dx
=\int_{\R^m}\frac12f(|\psi_0|)|\psi_0|^2-F(|\psi_0|)dx,
\]
it is easy to see that for every $\epsilon>0$ there is $R>0$ such that
\begin{\equ}\label{X0}
\limsup_{n\to\infty}\int_{|x|\geq R}\frac12f(|\psi_n|)|\psi_n|^2-F(|\psi_n|)dx\leq\epsilon.
\end{\equ}
Setting $z_n=\psi_n-\psi_0$ we obtain, using $\Phi'(\psi_n)=\Phi'(\psi_0)=0$,
\[
\inp{Q\psi_n}{Qz_n}+\inp{P\psi_n}{Pz_n}-\Real\int_{\R^m}f(|\psi_n|)(\psi_n,Qz_n-Pz_n)dx=0
\]
and
\[
\inp{Q\psi_0}{Qz_n}+\inp{P\psi_0}{Pz_n}-\Real\int_{\R^m}f(|\psi_0|)(\psi_0,Qz_n-Pz_n)dx=0.
\]
Hence, we get
\begin{\equ}\label{X1}
\|z_n\|^2=\Real\int_{\R^m}f(|\psi_n|)(\psi_n,Qz_n-Pz_n)dx+o_n(1),
\end{\equ}
where we have used $\Real\int_{\R^m}f(|\psi_0|)(\psi_0,Qz_n-Pz_n)dx\to0$ as $n\to\infty$ which holds because $z_n\rightharpoonup 0$ in $L^q$ for $q\in[2,m^*]$. Recall that, by $(f_1)$-$(f_3)$, for arbitrarily small $\de>0$ there exists $c_\de>0$ such that $f(s)s\leq \de s+c_\de(f(s)s^2)^{\frac{p-1}{p}}$ and $F(s)\leq\frac1{\theta+2}f(s)s^2$ for all $s\geq0$. Thus, \eqref{X0} and \eqref{X1} imply
\[
\aligned
\|z_n\|^2&\leq\de|\psi_n|_2|Qz_n-Pz_n|_2+c_\de\Big(\int_{|x|\geq R} f(|\psi_n|)|\psi_n|^2dx\Big)^{\frac{p-1}{p}}|Qz_n-Pz_n|_p+o_n(1) \\
&\leq\de C\|z_n\|+ C_\de \big(\epsilon+o_n(1)\big)^{\frac{p-1}{p}}\|z_n\|+o_n(1).
\endaligned
\]
Due to the arbitrariness of $\de,\epsilon>0$, one sees $\|z_n\|\to0$ as $n\to\infty$.

Now we prove the compactness in $W^{1,2}(\R^m,\tilde\mbs(\R^m))$. As a consequence of the equation (recall $A=\tilde D_{\ig_{\R^m}}+a\om_\C\cdot_{\ig_{\R^m}}$)
\[
A\psi_n=f(|\psi_n|)\psi_n \quad \text{and} \quad
A\psi_0=f(|\psi_0|)\psi_0,
\]
we have
\[
\aligned
|A(\psi_n-\psi_n)|_2&=\big|f(|\psi_n|)\psi_n-f(|\psi_0|)\psi_0\big|_2\\
&\leq\big|f(|\psi_n|)(\psi_n-\psi_0)\big|_2+\big|(f(|\psi_n|)-f(|\psi_0|))\psi_0 \big|_2.
\endaligned
\]
From $|\psi_n|_\infty\leq C$ and $\psi_n\to\psi_0$ in $\ce$ we deduce
\[
\big|f(|\psi_n|)(\psi_n-\psi_0)\big|_2\leq f(C)|\psi_n-\psi_0|_2=o_n(1), \quad \text{as }n\to\infty
\]
and
\[
\aligned
\int_{\R^m}\big|(f(|\psi_n|)-f(|\psi_0|))\psi_0 \big|^2dx
&=\int_{|x|\leq R} \big|(f(|\psi_n|)-f(|\psi_0|))\psi_0 \big|^2dx+o_R(1) \\
&=o_n(1)+o_R(1)
\endaligned
\]
as $n\to\infty$ because $|\psi_0(x)|\to0$ as $|x|\to\infty$. Therefore, we get $|A(\psi_n-\psi_n)|_2\to0$, i.e. $\psi_n\to\psi_0$ in $W^{1,2}(\R^m,\tilde\mbs(\R^m))$.

\medskip

To see the exponential decay, we rewrite \eqref{limit equ} as
\[
\tilde D_{\ig_{\R^m}}\psi =- a \om_\C\cdot_{\ig_{\R^m}}\psi + f(|\psi|)\psi.
\]
Applying the operator $\tilde D_{\ig_{\R^m}}$ to both sides of the above equation and noting that $\tilde D_{\ig_{\R^m}}^2=-\De$, we find
 \[
 \De \psi=\big( a^2-f(|\psi|)^2 \big)\psi-\nabla f(|\psi|)\cdot_{\ig_{\R^m}}\psi.
 \]
Now using the fact that
\[
\De|\psi|^2=2\Real(\De\psi,\psi)+2|\nabla\psi|^2
\]
and that $\Real(\nabla f(|\psi|)\cdot_{\ig_{\R^m}}\psi,\psi)\equiv0$, we obtain
\[
\De|\psi|^2=2\big( a^2-f(|\psi|)^2 \big)|\psi|^2+2|\nabla\psi|^2\geq2\big( a^2-f(|\psi|)^2 \big)|\psi|^2.
\]
Now for $\psi\in\cb$, by $|\psi(x)|\to0$ as $|x|\to\infty$, we may take $R>0$ large enough so that
\[
\De|\psi|^2\geq a^2|\psi|^2
\]
for all $|x|\geq R$. 

Let $\Ga(x)=e^{-a|x|}$. One checks easily that
\[
\De \Ga-a^2\Ga<0 ,\quad \text{for } |x|>0.
\]
By taking $C>0$ be such that $|\psi(x)|^2\leq  C\cdot\Ga(x)$ holds on $|x|=R$, we may consider $U=|\psi|^2-C\cdot \Ga$ and get
\[
\De U=\De |\psi|^2-C\cdot\De\Ga> a^2 U.
\]
By elliptic estimates and the comparison principle, we can easily conclude that $U(x)\leq 0$ for all $|x|\geq R$. Hence we obtain the exponential decay of $|\psi(x)|$ at infinity. Finally, thanks to the compactness of $\cb$, we see that the exponential decay holds uniformly for all $\psi\in\cb$.
\end{proof}

%

\subsection{Bourguignon-Gauduchon trivialization}

Our proof relies on the construction of a test spinor on $M$ in order to show the concentration behavior under the conditions $(f_1)$-$(f_3)$. The test spinor comes from a spinor on $\R^m$ being cut-off and transplanted to $M$ so that it has support in a small neighborhood of an arbitrary point $y\in M$. We first need to recall a construction from the paper \cite{AGHM} of Ammann et al.

To begin with, we fix a spinor field $\psi\in\cb$ arbitrarily. Let $r<\textit{inj}(M)/2$ where $\textit{inj}(M)>0$ is the injectivity radius of $M$, and let $\eta\in C_c^\infty(\R^m,[0,1])$ be such that $|\nabla\eta|\leq2/r$, $\eta(x)=1$ for $|x|\leq r$ and $\eta(x)=0$ for $|x|\geq2r$. Then, we define $\va_\vr:\R^m\to\mbs_m$ by
\begin{\equ}\label{spinor Rm}
\va_\vr(x)=\eta(x)\psi_\vr(x) \quad \text{where} \quad
\psi_\vr(x)=\psi(x/\vr).
\end{\equ}

In order to transplant the test spinor on $M$, we recall the Bourguignon-Gauduchon-trivialization. Here we fix $y\in M$ arbitrarily, and let $(x_1,\dots,x_m)$ be the normal coordinates given by the exponential map
\[
  \exp_y: \R^m\cong T_{y}M\supset U \to V\subset M,\quad x \mapsto \xi = \exp_y(x).
\]
For $\xi\in V$, let $G(\xi)=(\ig_{ij}(\xi))_{ij}$ denote the corresponding metric at $\xi$. Since $G(\xi)$ is symmetric and positive definite, the square root
$B(\xi)=(b_{ij}(\xi))_{ij}$ of $G(\xi)^{-1}$ is well defined, symmetric and positive definite. It can be thought of as linear isometry
\[
  B(\xi): (\R^m\cong T_{\exp_y^{-1}(\xi)}U,\ig_{\R^m}) \to (T_pV,\ig).
\]
We obtain an isomorphism of $SO(m)$-principal bundles:
\begin{displaymath}
\xymatrix{
  P_{SO}(U,\ig_{\R^m}) \ar[r]^{\displaystyle\phi}  \ar[d] & P_{SO}(V,\ig) \ar[d] \\
T_yM \supset U\ar[r]^{\ \ \displaystyle\exp_y} & V \subset M}
\end{displaymath}
where $\phi\{v_1,\dots,v_m\} = \{Bv_1,\dots,Bv_m\}$ for an oriented frame $\{v_1,\dots,v_m\}$ on $U$. Notice that $\phi$ commutes with the right action of $SO(m)$, hence it induces an isomorphism of spin structures:
\begin{displaymath}
\xymatrix{
Spin(m)\times U  =P_{Spin}(U,\ig_{\R^m}) \ar[r] \ar[d] & P_{Spin}(V,\ig) \subset P_{Spin}(M) \ar[d]\\
T_yM\supset U\ar[r]^{\ \ \displaystyle\exp_y} & V \subset M}
\end{displaymath}
Thus we obtain an isomorphism between the spinor bundles $\mbs(U)$ and $\mbs(V)$:
\begin{equation}\label{spin-iso}
  \mbs(U) := P_{Spin}(U,\ig_{\R^m})\times_\rho \mbs_m \longrightarrow \mbs(V) := P_{Spin}(V,\ig)\times_\rho \mbs_m \subset \mbs(M)
\end{equation}
where $(\rho,\mbs_m)$ is the complex spin representation.

Let $\{\pa_1,\dots,\pa_m\}$ be the canonical frame on the Euclidean space, where $\pa_i=\frac{\pa}{\pa x_i}$.
Setting $e_i=B(\pa_i)=\sum_{j}b_{ij}\pa_j$ we obtain an orthonormal frame $\{e_1,\dots, e_m\}$ of $(TV,\ig)$. Via \eqref{spin-iso}, we also have
\[
e_i\cdot_\ig\bar\psi=B(\pa_i)\cdot_\ig\bar\psi=\ov{\pa_i\cdot_{\ig_{\R^m}}\psi}
\quad \text{for } \psi\in\mbs_m.
\]

Now a spinor field $\va\in\Ga(\tilde\mbs(U))$ corresponds via the isomorphim \eqref{spin-iso} to a spinor $\bar\va\in\Ga(\tilde\mbs(V))$, and we will keep this notation for various spinor fields to represent such correspondence. In particular, since the spinors $\va_\vr\in\Ga(\tilde\mbs(U))$ have compact support in $U$ they correspond to spinors $\bar\va_\vr\in\Ga(\tilde\mbs(M))$ with compact support in $V$.

In the sequel, in order to simplify the notation, we use $\nabla$ and $\bar\nabla$, respectively, for the Levi-Civita connections on $(TU,\ig_{\R^m})$ and $(TV,\ig)$ and for the natural lifts of these connections to the spinor bundles $\tilde\mbs(U)$ and $\tilde\mbs(V)$, respectively. By abuse of notation, we write $D$ and $\bar D$ instead of the Dirac operators acting on $\Ga(\tilde\mbs(U))$ and $\Ga(\tilde\mbs(V))$, respectively. By \cite[Proposition 3.2]{AGHM} there holds
\begin{\equ}\label{cut-off spinor identity}
  \bar D \bar\va_\vr = \ov{D\va_\vr}+W\cdot_\ig\bar\va_\vr + X\cdot_\ig\bar\va_\vr + \sum_{i,j}(b_{ij}-\de_{ij})\ov{\pa_i\cdot_{\ig_{\R^m}}\nabla_{\pa_j}\va_\vr}
\end{\equ}
with $W\in\Ga(Cl(TV))$ and $X\in\Ga(TV)$ given by
\[
  W = \frac14\sum_{\substack{i,j,k \\ i\neq j\neq k\neq i}}\sum_{\al,\bt} b_{i\al}(\pa_{\al}b_{j\bt})b_{\bt k}^{-1}e_i\cdot_\ig e_j\cdot_\ig e_k,
\]
and
\[
  X = \frac14\sum_{i,k}\big( \bar\Ga_{ik}^i-\bar\Ga_{ii}^k\big)e_k = \frac12\sum_{i,k} \bar\Ga_{ik}^i e_k;
\]
here $(b_{ij}^{-1})_{ij}$ denotes the inverse matrix of $B$, and $\bar\Ga_{ij}^k:=\ig(\bar\nabla_{e_i}e_j,e_k)$. In what follows we identify $x=(x_1,\dots,x_m)\in U\subset\R^m$ with $\sum_{i}x_ie_i\in T_yM$ for notational convenience. As remarked in \cite{LP, Chavel}, in the neighborhood of $y$, the metric $\ig$ and its determinant have the following expansion:
\begin{\equ}\label{metric development}
\ig_{ij}(\exp_yx)=\de_{ij}-\frac13\rr_y(e_i,x,x,e_j)+O(|x|^3), 
\end{\equ}
\begin{\equ}\label{det-g development}
\sqrt{\text{det}\,\ig}(\exp_yx)=1-\frac16\text{Ric}_y(x,x)+O(|x|^3)
\end{\equ}
where
\[
\rr(e_i,e_j,e_k,e_l)=\ig(\nabla_{e_i}\nabla_{e_j}e_k,e_l)-\ig(\nabla_{e_j}\nabla_{e_i}e_k,e_l)-\ig(\nabla_{[e_i,e_j]}e_k,e_l)
\]
is the Riemannian curvature tensor and $\text{Ric}(v,w)=\sum_{i=1}^m\rr(e_i,v,w,e_i)$ is the Ricci curvature. Observing that $B=(G^{-1})^{\frac12}$, as in \cite{AGHM}, we have
\begin{\equ}\label{WX}
b_{ij}=\de_{ij}+\frac16\rr_y(e_i,x,x,e_j)+O(|x|^3) , \quad W=O(|x|^3)\quad \text{and} \quad X=O(|x|).
\end{\equ}

The main results of this subsection will be the following two lemmas.

\begin{Lem}\label{estimate for L-derivative}
Let $\bar\va_\vr$ be as in \eqref{spinor Rm}, then $\|\cl_\vr'(\bar\va_\vr)\|_\vr\leq O(\vr^2)$ as $\vr\to0$.
\end{Lem}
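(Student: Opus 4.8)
The plan is to control $\cl_\vr'(\bar\va_\vr)$ through the ``error spinor''
\[
\ce_\vr := A_\vr\bar\va_\vr - f(|\bar\va_\vr|)\bar\va_\vr = \vr\tilde D_\ig\bar\va_\vr + a\,\om_\C\cdot_\ig\bar\va_\vr - f(|\bar\va_\vr|)\bar\va_\vr,
\]
for which $\cl_\vr'(\bar\va_\vr)[\phi] = \vr^{-m}\Real\int_M(\ce_\vr,\phi)\,d\vol_\ig$ for every $\phi\in\ch$. By the Cauchy--Schwarz inequality in $L^2(M,d\vol_\ig)$ and Lemma \ref{embeding lemma} with $q=2$ one has $|\cl_\vr'(\bar\va_\vr)[\phi]|\le|\ce_\vr|_{2,\vr}\,|\phi|_{2,\vr}\le c_2|\ce_\vr|_{2,\vr}\|\phi\|_\vr$, and therefore $\|\cl_\vr'(\bar\va_\vr)\|_\vr\le c_2|\ce_\vr|_{2,\vr}$. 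The task thus reduces to proving $|\ce_\vr|_{2,\vr}^2 = \vr^{-m}\int_M|\ce_\vr|^2\,d\vol_\ig = O(\vr^4)$.

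To identify $\ce_\vr$ I would first observe that the rescaled bubble $\psi_\vr(x)=\psi(x/\vr)$ solves $\vr\tilde D_{\ig_{\R^m}}\psi_\vr + a\om_\C\cdot_{\ig_{\R^m}}\psi_\vr = f(|\psi_\vr|)\psi_\vr$ on $\R^m$, since the limit equation \eqref{limit equ} is invariant under $x\mapsto x/\vr$ once $\tilde D$ carries the factor $\vr$. Consequently, for $\va_\vr=\eta\psi_\vr$,
\[
\vr\tilde D_{\ig_{\R^m}}\va_\vr + a\om_\C\cdot_{\ig_{\R^m}}\va_\vr - f(|\va_\vr|)\va_\vr = \vr\,\nabla\eta\cdot_{\ig_{\R^m}}\psi_\vr + \eta\big(f(|\psi_\vr|)-f(\eta|\psi_\vr|)\big)\psi_\vr,
\]
and both terms are supported in $\{r\le|x|\le2r\}$, where $|\psi_\vr(x)|=|\psi(x/\vr)|\le Ce^{-cr/\vr}$ by Lemma \ref{exponential} and $f$ is bounded on $[0,1]$ (from $(f_1)$--$(f_3)$, $f(s)\le\frac c\theta(1+s^{p-2})$); hence this ``Euclidean residual'' is $O(e^{-c/\vr})$ uniformly on $M$. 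Transplanting to $M$ via the Bourguignon--Gauduchon trivialization and using \eqref{cut-off spinor identity}, together with the fiberwise isometry property --- which gives $|\bar\va_\vr|=|\va_\vr|$ and $\om_\C\cdot_\ig\bar\va_\vr=\ov{\om_\C\cdot_{\ig_{\R^m}}\va_\vr}$, since $B$ sends the oriented frame $\{\pa_i\}$ to the $\ig$-orthonormal frame $\{e_i\}$ --- I would obtain
\[
\ce_\vr = \ov{\,\vr\tilde D_{\ig_{\R^m}}\va_\vr + a\om_\C\cdot_{\ig_{\R^m}}\va_\vr - f(|\va_\vr|)\va_\vr\,} + \vr\,W\cdot_\ig\bar\va_\vr + \vr\,X\cdot_\ig\bar\va_\vr + \vr\sum_{i,j}(b_{ij}-\de_{ij})\,\ov{\pa_i\cdot_{\ig_{\R^m}}\nabla_{\pa_j}\va_\vr}.
\]

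The core of the argument is a term-by-term estimate of the three remaining pieces, using $W=O(|x|^3)$, $X=O(|x|)$, $b_{ij}-\de_{ij}=O(|x|^2)$ from \eqref{WX}, the boundedness of $\sqrt{\det\ig}$ on $V$, the splitting $\nabla_{\pa_j}\va_\vr=(\pa_j\eta)\psi_\vr+\vr^{-1}\eta\,(\nabla_{\pa_j}\psi)(\cdot/\vr)$, and the change of variables $x=\vr z$ --- under which $\vr^{-m}\int_M|x|^{2\al}|\psi(x/\vr)|^2d\vol_\ig$ becomes $\vr^{2\al}$ times a finite integral, thanks to the exponential decay of $\psi$ (Lemma \ref{exponential}) and of $\nabla\psi$. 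One gets that the $W$-term contributes $O(\vr^8)$, the $X$-term contributes $O(\vr^4)$, and the $(b_{ij}-\de_{ij})$-term contributes $O(\vr^4)$ --- here the single factor $\vr$ in front is exactly compensated by the $\vr^{-1}$ produced when differentiating $\psi_\vr$, so only the $|x|^2\sim\vr^2|z|^2$ coming from $b_{ij}-\de_{ij}$ survives (squared, $\vr^4$). Adding the $O(e^{-c/\vr})$ Euclidean residual gives $|\ce_\vr|_{2,\vr}^2=O(\vr^4)$, hence $\|\cl_\vr'(\bar\va_\vr)\|_\vr\le c_2|\ce_\vr|_{2,\vr}=O(\vr^2)$.

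I expect the only genuinely non-routine points to be: (a) making sure that transplanting the zeroth-order operators $\om_\C\cdot_\ig$ and $\phi\mapsto f(|\phi|)\phi$ through the trivialization produces no extra curvature terms --- which is precisely where the fact that $B$ is a fiberwise isometry intertwining Clifford multiplications is used, so that these operators pull back identically; and (b) justifying that $\nabla\psi$ decays fast enough for $\int_{\R^m}|z|^4|\nabla\psi(z)|^2dz$ to be finite. The latter follows from $\psi\in\bigcap_{q\ge2}W^{1,q}(\R^m,\tilde\mbs(\R^m))$ and a short interior elliptic-regularity argument applied to the Dirac equation on unit balls $B_1(x)$ with $|x|$ large, which upgrades the pointwise exponential decay of $\psi$ to exponential decay of $\nabla\psi$. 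The remaining bookkeeping of the $\vr^{-m}$-weighted norms under rescaling is routine, and it is there that the sharp exponent $2$ emerges (from the $X$- and $(b_{ij}-\de_{ij})$-terms).
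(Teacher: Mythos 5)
Your proposal is correct and follows essentially the same route as the paper: both reduce the dual-norm bound to an $\vr^{-m}$-weighted $L^2$ bound on the strong-form residual, split that residual via \eqref{cut-off spinor identity} into the Euclidean cut-off error (exponentially small by Lemma \ref{exponential}) plus the $W$-, $X$- and $(b_{ij}-\de_{ij})$-terms, and estimate these by \eqref{WX} and rescaling, with the sharp $O(\vr^2)$ coming from the $X$- and $(b_{ij}-\de_{ij})$-contributions exactly as you indicate. Your explicit remarks on the isometric transplantation of the zeroth-order terms and on the decay of $\nabla\psi$ address points the paper leaves implicit, but do not change the argument.
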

\begin{proof}
By the definition of $\va_\vr$ in \eqref{spinor Rm}, together with \eqref{cut-off spinor identity}, one checks easily that
\begin{\equ}\label{X2}
\vr D\va_\vr=\vr\nabla\eta\cdot_{\ig_{\R^m}}\psi_\vr+\vr\eta D\psi_\vr
=\vr\nabla\eta\cdot_{\ig_{\R^m}}\psi_\vr-a\eta\om_\C\cdot_{\R^m}\psi_\vr+\eta f(|\psi_\vr|)\psi_\vr
\end{\equ}
and
\[
\vr\bar D\bar\va_\vr+a\om_\C\cdot_\ig\bar\va_\vr-f(|\bar\va_\vr|)\bar\va_\vr=J_1+J_2+\cdots+J_6\in\ch^*
\]
where
\begin{eqnarray*}
&&J_1=\vr\ov{\nabla\eta\cdot_{\ig_{\R^m}}\psi_\vr}, \\
&&J_2=\eta \cdot \big(f(|\bar\psi_\vr|)-f(|\bar\va_\vr|)\big)\bar\psi_\vr, \\
&&J_3=\vr\eta W\cdot_\ig\bar\psi_\vr, \\
&&J_4=\vr \eta X\cdot_\ig\bar\psi_\vr, \\
&&J_5=\vr \eta \sum_{i,j}(b_{ij}-\de_{ij})\ov{\pa_i\cdot_{\ig_{\R^m}}\nabla_{\pa_j}\psi_\vr}, \\
&&J_6=\vr\sum_{i,j}(b_{ij}-\de_{ij})\ov{\pa_j\eta\pa_i\cdot_{\ig_{\R^m}}\psi_\vr}.
\end{eqnarray*}

In the following estimates we use that the support of $\eta$ is contained in $B_{2r}(0)\subset\R^m$. Using the exponential decay in Lemma \ref{exponential} and \eqref{metric development}-\eqref{WX}, we have:\\

$\displaystyle \aligned
\|J_1\|_{\ch\to\R}&\lesssim\bigg(\frac1{\vr^m}\int_{B_{2r}(y)}\big| \vr\ov{\nabla\eta\cdot_{\ig_{\R^m}}\psi_\vr} \big|^2d\vol_\ig \bigg)^{\frac12}
\lesssim\bigg(\frac1{\vr^m}\int_{r\leq|x|\leq2r}\big| \vr\psi_\vr \big|^2 dx \bigg)^{\frac12}\\
&\lesssim\vr\bigg(\int_{\frac{r}\vr\leq|x|\leq\frac{2r}{\vr}}\big|\psi \big|^2dx \bigg)^{\frac12}
\lesssim \vr\exp\Big(-\frac{c\cdot r}{\vr} \Big),
\endaligned$\\

$\displaystyle \aligned
\|J_2\|_{\ch\to\R}&\lesssim\bigg(\frac1{\vr^m}\int_{B_{2r}(y)\setminus B_r(y)}\big|\bar\psi_\vr \big|^2d\vol_\ig \bigg)^{\frac12}+\bigg(\frac1{\vr^m}\int_{B_{2r}(y)\setminus B_r(y)}\big|\bar\psi_\vr \big|^p d\vol_\ig \bigg)^{\frac{p-1}{p}} \\
&\lesssim\bigg(\int_{\frac{r}\vr\leq|x|\leq\frac{2r}{\vr}}|\psi|^2 dx \bigg)^{\frac12}+\bigg(\int_{\frac{r}\vr\leq|x|\leq\frac{2r}{\vr}}|\psi|^p dx \bigg)^{\frac{p-1}{p}}\lesssim\exp\Big(-\frac{c\cdot r}{\vr} \Big),
\endaligned$\\

$\displaystyle \aligned
\|J_3\|_{\ch\to\R}&\lesssim\bigg(\frac1{\vr^m}\int_{B_{2r}(y)}\big|\vr W\cdot_\ig\bar\psi_\vr\big|^2d\vol_\ig\bigg)^{\frac12}
\lesssim\bigg(\frac1{\vr^m}\int_{|x|\leq2r}\big(\vr |x|^3|\psi_\vr|\big)^2dx\bigg)^{\frac12} \\
&\lesssim \vr^4\bigg(\int_{|x|\leq\frac{2r}\vr} |x|^6|\psi|^2dx\bigg)^{\frac12} \lesssim \vr^4,
\endaligned$\\

$\displaystyle \aligned
\|J_4\|_{\ch\to\R}&\lesssim\bigg(\frac1{\vr^m}\int_{B_{2r}(y)}\big|\vr X\cdot_\ig\bar\psi_\vr\big|^2d\vol_\ig\bigg)^{\frac12}
\lesssim\bigg(\frac1{\vr^m}\int_{|x|\leq2r}\big(\vr |x||\psi_\vr|\big)^2dx\bigg)^{\frac12} \\
&\lesssim \vr^2\bigg(\int_{|x|\leq\frac{2r}\vr} |x|^2|\psi|^2dx\bigg)^{\frac12} \lesssim \vr^2,
\endaligned$\\

$\displaystyle \aligned
\|J_5\|_{\ch\to\R}&\lesssim\bigg(\frac1{\vr^m}\int_{|x|\leq2r}\big( \vr|x|^2|\nabla\psi_\vr| \big)^2 dx\bigg)^{\frac12}
\lesssim\vr^2\bigg(\int_{|x|\leq\frac{2r}\vr} |x|^4|\nabla\psi|^2 dx\bigg)^{\frac12}\lesssim \vr^2,
\endaligned$\\

$\displaystyle \aligned
\|J_6\|_{\ch\to\R}&\lesssim\bigg(\frac1{\vr^m}\int_{|x|\leq2r}\big( \vr|x|^2|\psi_\vr| \big)^2 dx\bigg)^{\frac12}
\lesssim\vr^3\bigg( \int_{|x|\leq\frac{2r}\vr} |x|^4|\psi|^2 dx \bigg)^{\frac12}\lesssim\vr^3.
\endaligned$\\

\noindent
Here, in the estimates of $J_1$ and $J_2$, the constant $c>0$ comes from the exponential decay in Lemma \ref{exponential}.
From all these, we finally obtain $\|\cl_\vr'(\bar\va_\vr)\|_\vr\leq O(\vr^2)$.
\end{proof}

We define a functional $\Theta: M\times\cb\to\R$ by
\[
\aligned
\Theta(y,\psi)&=\frac16\int_{\R^m}\text{Ric}_y(x,x)\Big(\frac12f(|\psi|)|\psi|^2-F(|\psi|) \Big)dx \\
&\qquad +\frac1{12}\sum_{i,j}\Real\int_{\R^m}\rr_y(e_i,x,x,e_j)
(\nabla_{\pa_j}\psi,\pa_i\cdot_{\ig_{\R^m}}\psi)dx
\endaligned
\]
Then we have the following

\begin{Lem}\label{estimate for L}
Let $\mu_0$ be the ground state energy of Eq. \eqref{limit equ} defined in \eqref{C0} and $\bar\va_\vr$ be defined in \eqref{spinor Rm}, then
\[
\cl_\vr(\bar\va_\vr)=\mu_0-\vr^2\Theta(y,\psi)+o(\vr^2)
\]
as $\vr\to0$.
\end{Lem}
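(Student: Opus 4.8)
The plan is to substitute the test spinor $\bar\va_\vr$ of \eqref{spinor Rm} into $\cl_\vr$, use the Bourguignon--Gauduchon identity \eqref{cut-off spinor identity} together with the equation satisfied by $\psi\in\cb$ to identify the leading term with the flat ground state energy $\mu_0$, and then read off the $\vr^2$--correction from the metric expansions \eqref{metric development}--\eqref{WX}. \textbf{Step 1 (reduction via the trivialization).} Recall $\cl_\vr(\phi)=\frac1{\vr^m}\int_M\big(\frac12(A_\vr\phi,\phi)-F(|\phi|)\big)\,d\vol_\ig$ with $A_\vr=\vr\tilde D_\ig+a\,\om_\C\cdot_\ig$, and that the trivialization \eqref{spin-iso} is a fibrewise isometry intertwining Clifford multiplication by $e_i=B(\pa_i)$ with Clifford multiplication by $\pa_i$, hence also the chirality operator on $V$ with $\om_\C$ on $\R^m$. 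Multiplying \eqref{cut-off spinor identity} by $\vr$, substituting \eqref{X2} for $\vr D\va_\vr$, pairing with $\bar\va_\vr$ and taking real parts, the $\vr\nabla\eta\cdot_{\ig_{\R^m}}\psi_\vr$ contribution drops (Clifford multiplication by a real vector is skew--Hermitian and $\eta$ is real) and the two terms $\mp a\eta^2(\om_\C\cdot_{\ig_{\R^m}}\psi_\vr,\psi_\vr)$, one from \eqref{X2} and one from $a\om_\C\cdot_\ig\bar\va_\vr$, cancel, leaving
\[
\Real(A_\vr\bar\va_\vr,\bar\va_\vr)=\eta^2 f(|\psi_\vr|)|\psi_\vr|^2+\vr\Real(W\cdot_\ig\bar\va_\vr,\bar\va_\vr)+\vr\sum_{i,j}(b_{ij}-\de_{ij})\Real\big(\pa_i\cdot_{\ig_{\R^m}}\nabla_{\pa_j}\va_\vr,\va_\vr\big),
\]
since $\Real(X\cdot_\ig\bar\va_\vr,\bar\va_\vr)=0$, again by skew--Hermitian-ity of $X\in\Ga(TV)$. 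As $|W|=O(|x|^3)$ by \eqref{WX} and $\va_\vr(x)=\eta(x)\psi(x/\vr)$, rescaling $x=\vr z$ gives $\frac\vr{\vr^m}\int_M|W|\,|\bar\va_\vr|^2\,d\vol_\ig=O(\vr^4)$, so the $W$--term is negligible.

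\textbf{Step 2 (flat leading term and Ricci correction).} By the fibrewise isometry $|\bar\va_\vr|=\eta|\psi_\vr|$, and in normal coordinates $d\vol_\ig=\big(1-\frac16\text{Ric}_y(x,x)+O(|x|^3)\big)\,dx$ by \eqref{det-g development}. Since $\eta\equiv1$ on $B_r(0)$ and $\psi$ decays exponentially (Lemma \ref{exponential}), replacing $\eta$ by $1$ in both $\eta^2 f(|\psi_\vr|)|\psi_\vr|^2$ and $F(\eta|\psi_\vr|)$ on all of $\R^m$ costs only an exponentially small error; rescaling $x=\vr z$, so that $\text{Ric}_y(x,x)=\vr^2\text{Ric}_y(z,z)$ and the $O(|x|^3)$ remainder contributes $O(\vr^3)$, gives
\[
\frac1{\vr^m}\int_M\Big(\tfrac12\eta^2 f(|\psi_\vr|)|\psi_\vr|^2-F(\eta|\psi_\vr|)\Big)d\vol_\ig=\int_{\R^m}\Big(\tfrac12 f(|\psi|)|\psi|^2-F(|\psi|)\Big)dz-\frac{\vr^2}{6}\int_{\R^m}\text{Ric}_y(z,z)\Big(\tfrac12 f(|\psi|)|\psi|^2-F(|\psi|)\Big)dz+o(\vr^2).
\]
Because $\psi\in\cb$ is a critical point of $\Phi$, testing $\Phi'(\psi)=0$ against $\psi$ yields $\|Q\psi\|^2-\|P\psi\|^2=\int_{\R^m}f(|\psi|)|\psi|^2\,dz$, so the first integral equals $\Phi(\psi)=\mu_0$; the second is exactly $-\vr^2$ times the first summand of $\Theta(y,\psi)$.

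\textbf{Step 3 (Riemann--curvature correction and conclusion).} In the $(b_{ij}-\de_{ij})$--term insert $b_{ij}-\de_{ij}=\frac16\rr_y(e_i,x,x,e_j)+O(|x|^3)$ from \eqref{WX} and $\nabla_{\pa_j}\va_\vr=\eta\nabla_{\pa_j}\psi_\vr+(\pa_j\eta)\psi_\vr$: the $\pa_j\eta$ piece is supported in the annulus $r\le|x|\le2r$ hence exponentially small, and the $O(|x|^3)$ piece contributes $O(\vr^3)$. With $\nabla_{\pa_j}\psi_\vr(x)=\vr^{-1}(\nabla_{\pa_j}\psi)(x/\vr)$, $x=\vr z$, and the factor $\eta(\vr z)\to1$ handled by dominated convergence via the exponential decay and uniform $W^{1,q}$ bound of Lemma \ref{exponential},
\[
\frac\vr{\vr^m}\int_M\sum_{i,j}(b_{ij}-\de_{ij})\eta^2\Real\big(\pa_i\cdot_{\ig_{\R^m}}\nabla_{\pa_j}\psi_\vr,\psi_\vr\big)d\vol_\ig=\frac{\vr^2}{6}\sum_{i,j}\int_{\R^m}\rr_y(e_i,z,z,e_j)\Real\big(\pa_i\cdot_{\ig_{\R^m}}\nabla_{\pa_j}\psi,\psi\big)dz+o(\vr^2).
\]
Skew--Hermitian-ity gives $\Real(\pa_i\cdot_{\ig_{\R^m}}\nabla_{\pa_j}\psi,\psi)=-\Real(\nabla_{\pa_j}\psi,\pa_i\cdot_{\ig_{\R^m}}\psi)$, so after multiplying by the $\frac12$ in front of the quadratic form this contributes $-\vr^2$ times the second summand of $\Theta(y,\psi)$. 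Adding the contributions of Steps 1--3 yields $\cl_\vr(\bar\va_\vr)=\mu_0-\vr^2\Theta(y,\psi)+o(\vr^2)$.

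\textbf{Main obstacle.} The delicate point is the bookkeeping in Step 1: among all the terms produced by \eqref{cut-off spinor identity} and \eqref{X2}, everything other than $\overline{\vr D\va_\vr}$ and the $(b_{ij}-\de_{ij})$--sum must be shown to be either killed exactly by skew--Hermitian-ity (the $X$-- and $\nabla\eta$--terms, plus the $\om_\C$ cancellation) or genuinely $o(\vr^2)$ after the rescaling $x=\vr z$ (the $W$--term, the $O(|x|^3)$ remainders in \eqref{metric development}--\eqref{WX}, and the cut-off annulus, all controlled by the exponential decay and the $W^{1,q}$ bounds of Lemma \ref{exponential}), so that only the two curvature integrals survive at order $\vr^2$; one must also track carefully the factor $\frac12$ in the quadratic form and the sign coming from Clifford skew-adjointness, which is precisely what converts $\Real(\pa_i\cdot\nabla_{\pa_j}\psi,\psi)$ into the expression $\Real(\nabla_{\pa_j}\psi,\pa_i\cdot\psi)$ appearing in $\Theta$.
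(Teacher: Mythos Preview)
Your proof is correct and follows essentially the same approach as the paper's: both use the Bourguignon--Gauduchon identity \eqref{cut-off spinor identity} together with \eqref{X2} to expand $(A_\vr\bar\va_\vr,\bar\va_\vr)$, kill the $\nabla\eta$-- and $X$--contributions by the skew--Hermitian property of Clifford multiplication, bound the $W$--term by $O(\vr^4)$, and then extract the Ricci correction from the volume--form expansion \eqref{det-g development} and the Riemann--curvature correction from the $(b_{ij}-\de_{ij})$--expansion \eqref{WX}. The only cosmetic differences are that the paper enumerates the pieces as $I_1,\dots,I_7$ and observes that the $\pa_j\eta$--contribution inside the $(b_{ij}-\de_{ij})$--sum (your Step~3) vanishes \emph{exactly} by skew--Hermitian-ity rather than merely being exponentially small; your estimate is of course sufficient.
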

\begin{proof}
By \eqref{cut-off spinor identity} and \eqref{X2} again, we have
\[
\frac1{\vr^m}\int_M(\vr\bar D\bar\va_\vr,\bar\va_\vr) d\vol_\ig +\frac{a}{\vr^m}\int_M(\om_\C\cdot_\ig\bar\va_\vr,\bar\va_\vr)d\vol_\ig =I_1+I_2+\dots+I_7,
\]
where
\begin{eqnarray*}
&&I_1=\frac{\Real}{\vr^m}\int_M\eta\cdot(\vr\ov{\nabla\eta\cdot_{\ig_{\R^m}}\psi_\vr},\bar\psi_\vr)d\vol_\ig, \\
&&I_2=\frac{1}{\vr^m}\int_Mf(|\bar\va_\vr|)|\bar\va_\vr|^2d\vol_\ig, \\
&&I_3=\frac{1}{\vr^m}\int_M\eta^2\big(f(|\bar\psi_\vr|)-f(|\bar\va_\vr|)\big)|\bar\psi_\vr|^2d\vol_\ig, \\
&&I_4=\frac{\Real}{\vr^m}\int_M\eta^2\cdot(\vr W\cdot_\ig\bar\psi_\vr,\bar\psi_\vr)d\vol_\ig, \\
&&I_5=\frac{\Real}{\vr^m}\int_M\eta^2\cdot(\vr X\cdot_\ig\bar\psi_\vr,\bar\psi_\vr)d\vol_\ig, \\
&&I_6=\sum_{i,j}\frac{\Real}{\vr^m}\int_M\eta^2(b_{ij}-\de_{ij})(\vr\ov{\pa_i\cdot_{\ig_{\R^m}}\nabla_{\pa_j}\psi_\vr},\bar\psi_\vr)d\vol_\ig, \\
&&I_7=\sum_{i,j}\frac{\Real}{\vr^m}\int_M\eta\pa_j\eta\cdot(b_{ij}-\de_{ij})(\vr\ov{\pa_i\cdot_{\ig_{\R^m}}\psi_\vr},\bar\psi_\vr)d\vol_\ig.
\end{eqnarray*}
Analogously to the arguments in Lemma \ref{estimate for L-derivative}, we shall use \eqref{metric development}-\eqref{WX} and the fact $(\ov{\nabla\eta\cdot_{\ig_{\R^m}}\psi_\vr},\bar\psi_\vr)\in i\R$ to obtain

$\displaystyle \aligned
I_1=0,
\endaligned$\\

$\displaystyle \aligned
I_2&=\frac1{\vr^m}\int_{|x|\leq r}f(|\psi_\vr|)|\psi_\vr|^2dx-\frac1{6\,\vr^m}\int_{|x|\leq r}f(|\psi_\vr|)|\psi_\vr|^2\text{Ric}_y(x,x)dx+o(\vr^2) \\
&=\int_{\R^m}f(|\psi|)|\psi|^2dx-\frac{\vr^2}{6}\int_{\R^m}f(|\psi|)|\psi|^2\text{Ric}_y(x,x)dx+o(\vr^2), 
\endaligned$\\

\medskip

$\displaystyle \aligned
|I_3|&\lesssim\frac1{\vr^m}\int_{r\leq|x|\leq2r}|\psi_\vr|^2dx+\frac1{\vr^m}\int_{r\leq|x|\leq2r}|\psi_\vr|^pdx\\
&\lesssim\int_{\frac{r}\vr\leq|x|\leq\frac{2r}\vr}|\psi|^2dx+\int_{\frac{r}\vr\leq|x|\leq\frac{2r}\vr}|\psi|^pdx\lesssim o(\vr^2),
\endaligned$\\

\medskip

$\displaystyle \aligned
|I_4|\lesssim\frac1{\vr^m}\int_{|x|\leq2r}\vr |x|^3|\psi_\vr|^2dx
\lesssim \vr^4\int_{|x|\leq\frac{2r}\vr}|x|^3|\psi|^2dx\lesssim \vr^4,
\endaligned$\\

$\displaystyle \aligned
I_5=0,
\endaligned$\\

$\displaystyle \aligned
I_6&=\sum_{i,j}\frac{\Real}{6\,\vr^m}\int_{|x|\leq r}\rr_y(e_i,x,x,e_j)(\vr\pa_i\cdot_{\ig_{\R^m}}\nabla_{\pa_j}\psi_\vr,\psi_\vr)dx+o(\vr^2)\\
&=\frac{\vr^2}{6}\sum_{i,j}\Real\int_{\R^m}\rr_y(e_i,x,x,e_j)(\pa_i\cdot_{\ig_{\R^m}}\nabla_{\pa_j}\psi,\psi)dx+o(\vr^2)\\
&=-\frac{\vr^2}{6}\sum_{i,j}\Real\int_{\R^m}\rr_y(e_i,x,x,e_j)(\nabla_{\pa_j}\psi,\pa_i\cdot_{\ig_{\R^m}}\psi)dx+o(\vr^2),
\endaligned$\\

$\displaystyle \aligned
I_7=0.
\endaligned$\\

\noindent
Combining all these estimates and the fact $\psi\in\cb$, we deduce that
\[
\aligned
\cl_\vr(\bar\va_\vr)&=\frac{I_1+I_2+\cdots+I_7}2-\frac1{\vr^m}\int_MF(|\bar\va_\vr|)d\vol_\ig\\
&=\frac12\int_{\R^m}f(|\psi|)|\psi|^2dx-\int_{\R^m}F(|\psi|)dx-\vr^2\Theta(y,\psi)+o(\vr^2) \\[0.5em]
&=\mu_0-\vr^2\Theta(y,\psi)+o(\vr^2)
\endaligned
\]
as desired.
\end{proof}

\subsection{Characterization of the concentration profile}

From Proposition \ref{reduction1} and \eqref{ga-vr}, we deduce that
\[
\mu_\vr=\inf_{u\in\ch_\vr^+\setminus\{0\}}
\max_{\psi\in \R u\op \ch_\vr^-}\cl_\vr(\psi)
=\inf_{u\in\ch_\vr^+\setminus\{0\}}\max_{t>0} I_\vr(tu)
=\inf_{u\in\msn_\vr}I_\vr(u)
\]
is a critical value. As in Proposition \ref{reduction1}, we take $\bar\psi_\vr$ to be the corresponding critical point of $\cl_\vr$. Then, as it was mentioned in \eqref{eeee2}, we find
\begin{\equ}\label{xx1}
\frac1{\vr^m}\int_M \frac12 f(|\bar\psi_\vr|)|\bar\psi_\vr|^2-F(|\bar\psi_\vr|)d\vol_\ig=\mu_\vr\geq\tau_0
\end{\equ}
for some $\tau_0>0$.
In what follows, for any $\xi\in M$ and $r>0$, $B_r(\xi)\subset M$ denotes the
ball of radius $r$ with respect to the metric $\ig$.

\begin{Lem}\label{step1}
There exist $y_\vr\in M$, $r_0,\de_0>0$ such that
\[
\liminf_{\vr\to0}\frac1{\vr^m}\int_{B_{\vr r_0}(y_\vr)}|\bar\psi_\vr|^2d\vol_\ig \geq \de_0.
\]
\end{Lem}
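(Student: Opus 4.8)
The plan is a no-vanishing argument for the critical spinors $\bar\psi_\vr$ of $\cl_\vr$ taken as in Proposition \ref{reduction1}. First I observe that $\{\bar\psi_\vr\}$ fits the set-up of \eqref{assumption0}: the lower bound $\cl_\vr(\bar\psi_\vr)=\mu_\vr\geq\tau_0>0$ is exactly \eqref{xx1}, and testing the min-max value \eqref{ga-vr} with the cut-off bubble $\bar\va_\vr$ of \eqref{spinor Rm} together with Corollary \ref{key corollary} and Lemmas \ref{estimate for L-derivative}--\ref{estimate for L} gives $\mu_\vr\leq\max_{t>0}I_\vr(t\bar\va_\vr^+)\leq\cl_\vr(\bar\va_\vr)+O(\|\cl_\vr'(\bar\va_\vr)\|_\vr^2)=\mu_0+o(1)$. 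Thus Lemma \ref{a1}$(1)$ (applied to $\phi_\vr=\bar\psi_\vr$, for which $\cl_\vr'(\phi_\vr)=0$) shows that $\|\bar\psi_\vr\|_\vr$ is bounded, and hence $|\bar\psi_\vr|_{q,\vr}$ is bounded for every $q\in[2,m^*]$ by Lemma \ref{embeding lemma}. Since the conclusion of the lemma is equivalent to the non-vanishing statement that $\liminf_{\vr\to0}\sup_{y\in M}\frac1{\vr^m}\int_{B_{\vr r_0}(y)}|\bar\psi_\vr|^2\,d\vol_\ig>0$ for some $r_0>0$, I will assume this fails and derive a contradiction.

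The second step converts the energy lower bound into a uniform lower bound on the $L^p$-mass. As $F\geq0$ by $(f_1)$, \eqref{xx1} yields $\frac1{\vr^m}\int_M f(|\bar\psi_\vr|)|\bar\psi_\vr|^2\,d\vol_\ig\geq 2\mu_\vr\geq 2\tau_0$; on the other hand $(f_1)$--$(f_3)$ force $f(s)\leq f(1)s^\theta$ for $s\leq1$, so $f$ is continuous at $0$, and combined with the subcritical growth this gives, for each $\de>0$, a constant $C_\de>0$ with $f(s)s^2\leq\de s^2+C_\de s^p$ on $[0,\infty)$. Inserting this and absorbing the $s^2$-term with the bound on $|\bar\psi_\vr|_{2,\vr}$ (choosing $\de$ small), I obtain $|\bar\psi_\vr|_{p,\vr}^p\geq\tau_1$ for some $\tau_1>0$ and all $\vr$ small.

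The heart of the proof is a scale-invariant version of P.-L. Lions' vanishing lemma. If non-vanishing fails then, passing to a subsequence and using that a $2\rho$-ball of $M$ is covered by a number of $\rho$-balls bounded uniformly in $\rho$ (for $\rho$ small), one gets $m_\vr(r):=\sup_{y\in M}\frac1{\vr^m}\int_{B_{\vr r}(y)}|\bar\psi_\vr|^2\,d\vol_\ig\to0$ as $\vr\to0$, for every fixed $r>0$. Around each $y\in M$ I trivialize by Bourguignon--Gauduchon over $\exp_y$ and set $w_\vr^y(x)=\bar\psi_\vr(\exp_y(\vr x))$; then $w_\vr^y$ solves a rescaled equation $\tilde D_{g_\vr}w+a\om_\C\cdot w=f(|w|)w$ on a fixed Euclidean ball, with rescaled metric $g_\vr\to\ig_{\R^m}$ in $C^k$ uniformly in $y$ by compactness of $M$. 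From the uniform bound on $\|\bar\psi_\vr\|_\vr$ (equivalently on $|\bar\psi_\vr|_{m^*,\vr}$), applying $\vr\tilde D_\ig$ to the equation and invoking the Lichnerowicz formula $(\tilde D_\ig)^2=\nabla^*\nabla+\frac14 Scal_\ig$ to obtain a uniform $W^{1,2}$-bound at scale $\vr$, and then bootstrapping the first-order equation via $(f_2)$ and $H^{1/2}\hookrightarrow L^{m^*}$ (the iteration terminating precisely because $p<m^*$), one gets a bound for $w_\vr^y$ in $W^{1,2}\cap L^\infty$ on a slightly smaller ball, uniform in $y$ and in small $\vr$. Covering $M$ by $N_\vr\lesssim\vr^{-m}$ balls $B_\vr(\xi_i)$ of bounded overlap and using the Sobolev embedding $H^1\hookrightarrow L^{m^*}$ with interpolation $2<p<m^*$, one gets (as in Lions' lemma \cite{Lions}) $\|w_\vr^{\xi_i}\|_{L^p(B_{1/2})}^p\lesssim \big(m_\vr(1)\big)^{\sigma}\big(\|w_\vr^{\xi_i}\|_{L^2(B_1)}^2+\|\nabla w_\vr^{\xi_i}\|_{L^2(B_1)}^2\big)$ for some $\sigma>0$; summing over $i$ and using the bounded overlap (so that $\sum_i(\|w_\vr^{\xi_i}\|_{L^2(B_1)}^2+\|\nabla w_\vr^{\xi_i}\|_{L^2(B_1)}^2)\lesssim|\bar\psi_\vr|_{2,\vr}^2+\frac{\vr^2}{\vr^m}\int_M|\nabla\bar\psi_\vr|^2\,d\vol_\ig$, which is bounded) gives $|\bar\psi_\vr|_{p,\vr}^p\lesssim\big(m_\vr(1)\big)^{\sigma}\to0$, contradicting the previous step.

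The main obstacle is precisely this scale-invariant Lions lemma with all constants uniform as $\vr\to0$. Two points need care: \emph{(a)} the uniform-in-$(y,\vr)$ regularity of the blown-up spinors $w_\vr^y$, which depends on the rescaled geometry on a fixed ball being uniformly close to the flat one --- here the compactness of $M$ is essential --- together with a subcritical bootstrap for the nonlinear Dirac equation; and \emph{(b)} the fact that $\|\cdot\|_\vr$ is built from the nonlocal operator $|A_\vr|^{1/2}$, which makes a clean localization awkward; this is circumvented by passing first, through the Lichnerowicz formula, to a genuine local $W^{1,2}$-bound at scale $\vr$, after which the covering estimate above is routine. The remaining ingredients --- the algebra of the rescaling $x\mapsto\vr x$ and the uniform embedding constants of Lemma \ref{embeding lemma} --- are bookkeeping.
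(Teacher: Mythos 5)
Your proposal is correct in outline and shares the paper's overall strategy: argue by contradiction, show that vanishing of the rescaled local $L^2$-masses forces $|\bar\psi_\vr|_{q,\vr}\to0$ for subcritical $q$, and contradict the uniform energy lower bound \eqref{xx1}. Where you genuinely diverge is in the implementation of the Lions-type vanishing step. The paper stays at the $H^{1/2}$ level: it multiplies $\bar\psi_\vr$ by cut-offs $\bt_{\xi,\vr}$ supported on balls of radius $2\vr r$, interpolates $L^{q_s}$ between $L^2$ and $L^{m^*}$ on each ball, controls the localized $L^{m^*}$-norm by $\|\bt_{\xi,\vr}\bar\psi_\vr\|_\vr^2$ via the $\vr$-uniform embedding of Lemma \ref{embeding lemma}, and sums over a covering of bounded multiplicity. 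You instead blow up at each point, upgrade to genuine local $W^{1,2}\cap L^\infty$ bounds through elliptic regularity and the Lichnerowicz identity, and then run the covering argument with local $H^1$-norms. Your route is more laborious but honestly confronts the point the paper glosses over, namely that the norm $\|\cdot\|_\vr$ is built from the nonlocal operator $|A_\vr|^{1/2}$, so the summation $\sum_\xi\|\bt_{\xi,\vr}\bar\psi_\vr\|_\vr^2\lesssim\|\bar\psi_\vr\|_\vr^2$ implicit in the paper requires a commutator/localization estimate. The price you pay is that your argument uses that $\bar\psi_\vr$ solves the equation, whereas the paper's version applies to an arbitrary $\|\cdot\|_\vr$-bounded sequence --- a generality the paper actually exploits later, in Lemma \ref{step3}, where the same vanishing argument is applied to the difference $w_\vr=\bar\psi_\vr-\bar\phi_\vr$, which is not a solution.

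One minor point to fix in your ordering: you propose to get the uniform $W^{1,2}$-bound at scale $\vr$ from the Lichnerowicz formula \emph{before} bootstrapping. But $|A_\vr\bar\psi_\vr|_2^2=\int_M f(|\bar\psi_\vr|)^2|\bar\psi_\vr|^2\,d\vol_\ig$ involves $|\bar\psi_\vr|^{2(p-1)}$, and $2(p-1)$ can exceed $m^*$ when $p$ is close to $m^*$, so this quantity is not controlled by the $H^{1/2}$ data alone. You should first run the subcritical bootstrap on the first-order equation to obtain the uniform $L^\infty$-bound (which needs only the local $L^{m^*}$ information), and only then invoke Lichnerowicz to convert it into the local $W^{1,2}$-bound used in the covering estimate. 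With that reordering the argument closes.
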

\begin{proof}
Assume to the contrary that for any $r>0$
\begin{\equ}\label{a2}
\sup_{\xi\in M}\frac1{\vr^m}\int_{B_{2\vr r}(\xi)}|\bar\psi_\vr|^2 d\vol_\ig \to0
\quad \text{as } \vr\to0.
\end{\equ}
For each $\xi\in M$, we choose a smooth real cut-off function
$\bt_{\xi,\vr}\equiv1$ on $B_{\vr r}(\xi)$ and $\supp\bt_{\xi,\vr}\subset B_{2\vr r}(\xi)$.
Then, for $s\in(0,1)$, we consider $q_s=2+(m^*-2)s\in(2,m^*)$ and we have
\[
\int_{B_{2\vr r}(\xi)}|\bt_{\xi,\vr}\bar\psi_\vr|^{q_s}d\vol_\ig\leq
\Big( \int_{B_{2\vr r}(\xi)}|\bt_{\xi,\vr}\bar\psi_\vr|^2 d\vol_\ig\Big)^{1-s}
\Big( \int_{B_{2\vr r}(\xi)}|\bt_{\xi,\vr}\bar\psi_\vr|^{\frac{2m}{m-1}} d\vol_\ig\Big)^s.
\]
Taking $s=\frac2{m^*}$, we obtain from Lemma \ref{embeding lemma} that
\[
\Big( \frac1{\vr^m}\int_{B_{2\vr r}(\xi)}|\bt_{\xi,\vr}\bar\psi_\vr|^{m^*} d\vol_\ig\Big)^s
\leq C \|\bt_{\xi,\vr}\bar\psi_\vr\|_\vr^2.
\]
We now cover $M$ by balls of radius $\vr r$ such that any point $\xi\in M$ is contained
in at most $K_M$ balls, where $K_M$ does not depend on $\vr$. In fact, one may take $K_M = 1+dim M$. Consequently we have
\[
\frac1{\vr^m}\int_M|\bar\psi_\vr|^{q_s}d\vol_\ig\leq C\cdot K_M\Big(
\sup_{\xi\in M}\int_{B_{2\vr r}(\xi)}|\bt_{\xi,\vr}\bar\psi_\vr|^2 d\vol_\ig\Big)^{1-s}
\|\bar\psi_\vr\|_\vr^2.
\]
Since $\|\bar\psi_\vr\|_\vr$ is bounded, it follows from \eqref{a2} that
$|\bar\psi_\vr|_{q_s,\vr}\to0$, and since $2<q_s<m^*$, we see easily that
$|\bar\psi_\vr|_{q,\vr}\to0$ for all $q\in(2,m^*)$ which contradicts \eqref{xx1}
\end{proof}

\begin{Lem}\label{step2}
$\lim_{\vr\to0}\mu_\vr=\mu_0$.
\end{Lem}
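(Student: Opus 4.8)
The plan is to prove the two bounds $\limsup_{\vr\to0}\mu_\vr\le\mu_0$ and $\liminf_{\vr\to0}\mu_\vr\ge\mu_0$ separately.

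\emph{Upper bound.} Fix any $y\in M$ and $\psi\in\cb$ and let $\bar\va_\vr$ be the transplanted test spinor from \eqref{spinor Rm}. By Lemma \ref{estimate for L} we have $\cl_\vr(\bar\va_\vr)=\mu_0+o(1)$, so, since $\mu_0>0$, the family $\{\bar\va_\vr\}$ satisfies \eqref{assumption0} with, say, $c_1=\mu_0/2$ and $c_2=2\mu_0$ for all small $\vr$; by Lemma \ref{estimate for L-derivative}, $\|\cl_\vr'(\bar\va_\vr)\|_\vr=O(\vr^2)\to0$. Hence Corollary \ref{key corollary} applies to $\{\bar\va_\vr\}$ and gives $\max_{t>0}I_\vr(t\bar\va_\vr^+)\le\cl_\vr(\bar\va_\vr)+O(\|\cl_\vr'(\bar\va_\vr)\|_\vr^2)=\mu_0+o(1)$. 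Since by \eqref{ga-vr} we have $\mu_\vr=\inf_{u\in\ch_\vr^+\setminus\{0\}}\max_{t>0}I_\vr(tu)\le\max_{t>0}I_\vr(t\bar\va_\vr^+)$, this yields $\limsup_{\vr\to0}\mu_\vr\le\mu_0$.

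\emph{Lower bound via blow-up.} Let $\bar\psi_\vr=u_\vr+\chi_\vr(u_\vr)$ be the critical point with $\cl_\vr(\bar\psi_\vr)=\mu_\vr$ from Proposition \ref{reduction1}$(3)$. Repeating estimate \eqref{PS2} of Lemma \ref{PS-condition} (with the $o(\|\cdot\|_\vr)$ term absent and using the $\vr$-independent embedding constants of Lemma \ref{embeding lemma}) together with the upper bound just proved shows that $\|\bar\psi_\vr\|_\vr$ is bounded uniformly in $\vr$; in particular $\vr^{-m}\int_M|\bar\psi_\vr|^qd\vol_\ig$ is bounded for every $q\in[2,m^*]$. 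Pass to a subsequence realizing $\liminf_{\vr\to0}\mu_\vr$ and let $y_\vr\in M$, $r_0,\de_0>0$ be as in Lemma \ref{step1}. Fix $r<\textit{inj}(M)/2$, work in normal coordinates at $y_\vr$ together with the Bourguignon--Gauduchon trivialization over $B_r(y_\vr)$, and set $z_\vr(x)=\bar\psi_\vr(\exp_{y_\vr}(\vr x))$, defined on the domain $\Om_\vr:=\vr^{-1}\exp_{y_\vr}^{-1}(B_r(y_\vr))\subset\R^m$, which exhausts $\R^m$ as $\vr\to0$. From the bound on $\|\bar\psi_\vr\|_\vr$ and the scaling one obtains a uniform bound on $\|z_\vr\|_{W^{1/2,2}(\Om_\vr)}$, and by elliptic estimates also in $W^{1,q}_{\loc}$; Lemma \ref{step1}, the change of variables and $\sqrt{\det\ig}\to1$ from \eqref{det-g development} give $\int_{B_{r_0}(0)}|z_\vr|^2dx\ge\de_0/2$ for all small $\vr$. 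Translating the equation $\vr\tilde D_\ig\bar\psi_\vr+a\om_\C\cdot_\ig\bar\psi_\vr=f(|\bar\psi_\vr|)\bar\psi_\vr$ through the transformation formula \eqref{cut-off spinor identity} (with the cut-off function taken $\equiv1$) and rescaling, $z_\vr$ satisfies on $\Om_\vr$
\[
\tilde D_{\ig_{\R^m}}z_\vr+a\om_\C\cdot_{\ig_{\R^m}}z_\vr=f(|z_\vr|)z_\vr+\msr_\vr,
\]
where, by the expansions \eqref{metric development}--\eqref{WX}, $\msr_\vr\to0$ in $L^2_{\loc}(\R^m)$. Passing to a further subsequence, $z_\vr\rightharpoonup z_0$ in $W^{1/2,2}_{\loc}(\R^m)$, $z_\vr\to z_0$ in $L^q_{\loc}$ for $q\in[2,m^*)$ and a.e.; the mass estimate forces $z_0\neq0$, and letting $\vr\to0$ in the equation shows that $z_0\in\ce$ is a nontrivial solution of \eqref{limit equ}, hence $\Phi(z_0)\ge\mu_0$ by \eqref{C0}.

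\emph{Conclusion of the lower bound.} Set $g(s):=\tfrac12f(s)s^2-F(s)$; then $g(0)=0$ and $g'(s)=\tfrac12f'(s)s^2\ge0$ by $(f_1)$, so $g\ge0$ on $[0,\infty)$. Since $\bar\psi_\vr$ is a critical point, \eqref{xx1} reads $\mu_\vr=\vr^{-m}\int_Mg(|\bar\psi_\vr|)d\vol_\ig$. For any fixed $\rho>0$, restricting the integral to $B_{\vr\rho}(y_\vr)$ and changing variables,
\[
\mu_\vr\ge\frac1{\vr^m}\int_{B_{\vr\rho}(y_\vr)}g(|\bar\psi_\vr|)d\vol_\ig=\int_{|x|\le\rho}g(|z_\vr(x)|)\sqrt{\det\ig(\exp_{y_\vr}(\vr x))}\,dx
\]
for all small $\vr$; since $\sqrt{\det\ig(\exp_{y_\vr}(\vr x))}\to1$ uniformly on $\{|x|\le\rho\}$ and $z_\vr\to z_0$ a.e. with $g\ge0$, Fatou's lemma gives $\liminf_{\vr\to0}\mu_\vr\ge\int_{|x|\le\rho}g(|z_0|)dx$. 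Letting $\rho\to\infty$ and using that $z_0$ solves \eqref{limit equ} (whence $\Phi(z_0)=\int_{\R^m}g(|z_0|)dx$) we obtain $\liminf_{\vr\to0}\mu_\vr\ge\Phi(z_0)\ge\mu_0$. Combined with the upper bound, $\lim_{\vr\to0}\mu_\vr=\mu_0$.

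\emph{Main obstacle.} The delicate part is the blow-up step for the lower bound: transporting $\bar\psi_\vr$ to $\R^m$ through the Bourguignon--Gauduchon trivialization, checking that the rescaled equation converges --- with a remainder controlled through \eqref{metric development}--\eqref{WX} --- to the flat limit equation \eqref{limit equ}, and then using the local compactness of $\ch\hookrightarrow L^q$ together with the non-vanishing furnished by Lemma \ref{step1} to guarantee that the weak limit $z_0$ is a genuine, nontrivial critical point of $\Phi$. The strong indefiniteness of $\cl_\vr$ and $\Phi$ is not an obstruction here, because all the energy levels in play are represented by the nonnegative integrand $g$, to which Fatou's lemma applies directly.
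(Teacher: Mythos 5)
Your proof is correct and follows essentially the same route as the paper: the upper bound via the test spinors $\bar\va_\vr$ combined with Lemma \ref{estimate for L-derivative}, Lemma \ref{estimate for L} and Corollary \ref{key corollary}, and the lower bound via the blow-up of $\bar\psi_\vr$ around the non-vanishing points of Lemma \ref{step1}, identification of the nontrivial limit $z_0$ as a solution of \eqref{limit equ}, and Fatou's lemma applied to the nonnegative integrand $\tfrac12 f(s)s^2-F(s)$.
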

\begin{proof}
By the estimates obtained in Corollary \ref{key corollary}, Lemma \ref{estimate for L-derivative} and \ref{estimate for L}, we only need to show that
\[
\liminf_{\vr\to0}\mu_\vr\geq\mu_0.
\]

For this purpose, let us take a sequence $\{y_\vr\}\subset M$ and constants $r_0,\de_0>0$ such that Lemma \ref{step1} is valid. Given $0<r<\textit{inj}(M)/2$ arbitrarily, let $\bt_\vr\in C^\infty(M,[0,1])$ be such that $\bt_\vr\equiv1$ on $B_r(y_\vr)$ and $\supp \bt_\vr\subset B_{2r}(y_\vr)$. Via the Bourguignon-Gauduchon trivialization between the spinor bundles $\mbs(B_r(y_\vr))\to \mbs(B_r(0))$ and the rescaling $x\mapsto \frac{x}\vr$ on $\R^m$, the spinor field $\bt_\vr \bar\psi_\vr$ corresponds to a spinor field $z_\vr(\cdot)$ on $B_{2r/\vr}(0)\subset\R^m$.

Since $\bt_\vr\bar\psi_\vr$ is bounded in $\ch$ with respect to the norm $\|\cdot\|_\vr$, one sees easily that $z_\vr$ is bounded in $W^{\frac12,2}(B_R(0),\tilde\mbs(B_R(0)))$ for any $R>0$ as $\vr\to0$. By the fact
\[
\int_{|x|\leq \frac{r}\vr}|z_\vr|^{m^*}dx\lesssim\frac1{\vr^m}\int_{B_r(y_\vr)}|\bar\psi_\vr|^{m^*}d\vol_\ig
\lesssim \|\bar\psi_\vr\|_\vr<\infty
\]
for all small $\vr$, it follows that there exists $z_0\in L^{m^*}(\R^m,\tilde\mbs(\R^m))\cap W^{\frac12,2}_{loc}(\R^m,\tilde\mbs(\R^m))$  such that $z_\vr\rightharpoonup z_0$ in $W^{\frac12,2}_{loc}(\R^m,\tilde\mbs(\R^m))$ weakly and $z_\vr\to z_0$ in $L_{loc}^q(\R^m,\tilde\mbs(\R^m))$ for $2\leq q<\frac{2m}{m-1}$.

Let $\va\in W^{\frac12,2}(\R^m,\tilde\mbs(\R^m))$ be such that
$\supp \va$ is compact, i.e. $\supp \va\subset B_R^0$ for some $R>0$ large. Then, by a similar identity of \eqref{cut-off spinor identity} and \eqref{metric development}-\eqref{WX}, we have
\[
\aligned
&\int_{\R^m}\big( \tilde D_{\ig_{\R^m}}z_0 + a\om_\C\cdot_{\ig_{\R^m}}z_0
-f(|z_0|)z_0, \va \big)dx  \\
&\qquad =\lim_{\vr\to0}\int_{\supp\va}\big( \tilde D_{\ig_{\R^m}}z_\vr
+a\om_\C\cdot_{\ig_{\R^m}}z_\vr-f(|z_\vr|)z_\vr, \va\big)
d\vol_{\ig_{\Theta_\vr}}  \\
&\qquad =\lim_{\vr\to0}\frac1{\vr^m}\int_{B_{\vr R}(\xi_\vr)}
\big( \vr \tilde D_\ig \psi_\vr+ a\om_\C\cdot_\ig \psi_\vr
-f(|\psi_\vr|)\psi_\vr, \bar\va_\vr\big) d\vol_\ig\\
&\qquad =0
\endaligned
\]
where $\va_\vr(x)=\va(\frac{x}\vr)$ and $\bar\va_\vr$ is the spinor on $M$ defined via the Bourguignon-Gauduchon trivialization. Hence $z_0$ satisfies
\begin{\equ}\label{z-infty}
\tilde D_{\ig_{\R^m}}z_0 + a\om_\C\cdot_{\ig_{\R^m}}z_0
=f(|z_0|)z_0 \quad \text{on } \R^m.
\end{\equ}
As a consequence of $(f_1)$-$(f_3)$ and by elliptic regularity, we have $\tilde D_{\ig_{\R^m}}z_0 + a\om_\C\cdot_{\ig_{\R^m}}z_0\in L^{\frac{m^*}{m^*-1}}(\R^m,\tilde\mbs(\R^m))$.
Moreover, combined with the Sobolev embedding
$L^{\frac{p}{p-1}}(\R^m,\tilde\mbs(\R^m))\hookrightarrow
W^{-\frac12,2}(\R^m,\tilde\mbs(\R^m))$, we get $z_0\in
W^{\frac12,2}(\R^m,\tilde\mbs(\R^m))$.

Now, by Lemma \ref{step1}, one sees  that $z_0$ is a non-trivial solution to \eqref{z-infty}. In virtue of \eqref{xx1}, we conclude that
\[
\aligned
\liminf_{\vr\to0}\cl_\vr(\bar\psi_\vr)&=\liminf_{\vr\to0}
\frac1{\vr^m}\int_M \frac12f(|\bar\psi_\vr|)|\bar\psi_\vr|^2-F(|\bar\psi_\vr|)d\vol_\ig  \\
&\geq\liminf_{\vr\to0}\int_{B_R(0)}\frac12f(|z_\vr|)|z_\vr|^2-F(|z_\vr|)dx\\
&\geq\int_{B_R(0)}\frac12f(|z_0|)|z_0|^2-F(|z_0|)dx
\endaligned
\]
where in the last inequality we have used the Fatou's lemma.
Due to the arbitrariness of $R>0$, together with \eqref{C0},
we obtain
\[
\liminf_{\vr\to0}\mu_\vr=\liminf_{\vr\to0}\cl_\vr(\bar\psi_\vr)\geq \mu_0.
\]
\end{proof}

Now, we fix the sequence $\{y_\vr\}\subset M$ and constants $r_0,\de_0>0$ as in Lemma \ref{step1}. Up to a subsequence, we also assume that $y_\vr\to y_0$ in $M$ as $\vr\to0$. As Lemma \ref{step2} alluded, there exists a least-energy solution $z_0$ of the limit equation \eqref{limit equ} corresponding to the weak limit of the solutions $\bar\psi_\vr$ via  Bourguignon-Gauduchon trivialization and rescaling. Without loss of generality, one may assume $y_\vr$ to be the maximum point of $|\bar\psi_\vr|$, and then $z_0\in\cb$.

Choose $\bt\in C^\infty(M,[0,1])$ be such that $\bt\equiv1$ on $B_r(y_0)$ and $\supp \bt\subset B_{2r}(y_0)$ for some $r<\textit{inj}(M)/2$. Set
$\bar\phi_\vr=\bt\bar z_{0,\vr}$, where $z_{0,\vr}(x)=z_0(x/\vr)$ and $\bar z_{0,\vr}$ is the corresponding spinor field obtained via Bourguignon-Gauduchon trivialization. We have the following asymptotic characterization.

\begin{Lem}\label{step3}
$\|\bar\psi_\vr-\bar\phi_\vr\|_\vr\to0$ as $\vr\to0$.
\end{Lem}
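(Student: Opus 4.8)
The plan is to show that $\bar\psi_\vr$ and the test spinor $\bar\phi_\vr$ built from the limiting profile $z_0\in\cb$ are asymptotically the same in the $\|\cdot\|_\vr$-norm, by exploiting the variational characterization of $\mu_\vr$ together with the sharp level expansions already established. First I would record the two-sided squeeze on the energy: on the one hand, Lemma~\ref{step2} gives $\mu_\vr\to\mu_0$; on the other, $\bar\phi_\vr$ is exactly a cut-off/rescaled ground state (of the same type as $\bar\va_\vr$ in \eqref{spinor Rm} with $\psi=z_0$), so Lemma~\ref{estimate for L-derivative} gives $\|\cl_\vr'(\bar\phi_\vr)\|_\vr\leq O(\vr^2)$ and Lemma~\ref{estimate for L} gives $\cl_\vr(\bar\phi_\vr)=\mu_0-\vr^2\Theta(y_0,z_0)+o(\vr^2)$. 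Since $\bar\psi_\vr$ is a critical point with $\cl_\vr(\bar\psi_\vr)=\mu_\vr$, and since (by the argument in Lemma~\ref{step2}) the reverse Fatou-type bound forces the rescaled $z_\vr\to z_0$ strongly on compact sets while carrying at least the energy $\mu_0$, one concludes $\cl_\vr(\bar\psi_\vr)=\mu_0+o(1)$ with no energy escaping to infinity.

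The heart of the argument is to upgrade this energy coincidence to norm convergence. I would apply Corollary~\ref{key corollary} to the sequence $\phi_\vr:=\bar\phi_\vr$: it produces $u_\vr\in\msn_\vr$ with $\|\bar\phi_\vr-u_\vr-\chi_\vr(u_\vr)\|_\vr\leq O(\|\cl_\vr'(\bar\phi_\vr)\|_\vr)=O(\vr^2)$ and $I_\vr(u_\vr)\leq\cl_\vr(\bar\phi_\vr)+O(\vr^4)=\mu_0-\vr^2\Theta(y_0,z_0)+o(\vr^2)$. Since $\mu_\vr=\inf_{\msn_\vr}I_\vr\leq I_\vr(u_\vr)$, this pins $\mu_\vr$ from above; combined with a matching lower estimate for $\mu_\vr$ obtained by the expansion along the actual solution $\bar\psi_\vr$ (which is where the maximality of $\Theta(y_0,z_0)$ over $M\times\cb$ enters — this is established in the surrounding proof of the main theorem, so I may assume $\cl_\vr(\bar\psi_\vr)=\mu_0-\vr^2\Theta(y_0,z_0)+o(\vr^2)$ as well). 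Then both $\bar\psi_\vr$ and $\bar\phi_\vr$ are $(P.S.)$-type sequences in the sense of \eqref{assumption0} with the \emph{same} asymptotic energy, so Corollary~\ref{key corollary} expresses each, up to $O(\vr^2)$, in terms of a point on $\msn_\vr$, and the min-max geometry of Lemma~\ref{reduction2} (the function $t\mapsto I_\vr(tu)$ has a unique, nondegenerate maximum, uniformly in $\vr$) forces the two corresponding points on $\msn_\vr$, hence the two spinors, to coincide up to $o(1)$ in $\|\cdot\|_\vr$.

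To make the last step rigorous I would argue as follows: write $u_\vr=t_\vr\bar\psi_\vr^+$ and $v_\vr=s_\vr\bar\phi_\vr^+$ for the Nehari-projections furnished by Proposition~\ref{estimate prop}; by Lemma~\ref{a1}(2) both $\bar\psi_\vr$ and $\bar\phi_\vr$ are $o(1)$-close in $\|\cdot\|_\vr$ to $w+\chi_\vr(w)$ for $w$ their respective positive parts. Next, the rescaling/trivialization identification of Lemma~\ref{step2} shows $z_\vr=(\text{rescaled }\bar\psi_\vr)\to z_0$ strongly in $W^{\frac12,2}_{loc}$ and, via the uniform exponential decay of $\cb$ in Lemma~\ref{exponential} together with the exponential decay estimate for $\bar\psi_\vr$ (item (1) of the main theorem, proved just before this lemma), the tails of both $\bar\psi_\vr$ and $\bar\phi_\vr$ in the $\|\cdot\|_\vr$-norm are uniformly small; hence the local convergence $z_\vr\to z_0$ promotes to $\|\bar\psi_\vr-\bar\phi_\vr\|_\vr\to0$ after matching the positive parts on $S_\vr^+$. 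The main obstacle, which I would treat carefully, is precisely this promotion of weak-local convergence to global norm convergence: it requires showing no $\|\cdot\|_\vr$-mass of $\bar\psi_\vr$ slides off to spatial infinity, which is exactly what the combination of the energy identity $\cl_\vr(\bar\psi_\vr)\to\mu_0$ (no energy loss) and the uniform exponential decay supplied by Lemma~\ref{exponential} rules out — so I would phrase it as a ``vanishing of the tail'' lemma and conclude by a standard $\eps/3$ splitting of the $\|\cdot\|_\vr$-norm into a compact core (where $z_\vr\to z_0$) and two exponentially small tails.
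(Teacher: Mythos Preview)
Your proposal has a genuine circularity. You invoke ``the exponential decay estimate for $\bar\psi_\vr$ (item (1) of the main theorem, proved just before this lemma)'' and also ``the maximality of $\Theta(y_0,z_0)$ over $M\times\cb$\dots established in the surrounding proof of the main theorem''. In the paper's logical order both of these come \emph{after} Lemma~\ref{step3} and in fact depend on it: the exponential decay (Lemma~\ref{exponentially decay}) explicitly uses Lemma~\ref{step3} to show that $|\bar\psi_\vr|$ is small away from $y_\vr$, and the identification of $(y_0,z_0)$ as a maximizer of $\Theta$ is the final step of the proof of the main theorem, which rests on Lemma~\ref{estimate mu-vr lower}, which in turn needs the $W^{1,q}$ convergence $z_\vr\to z_0$ established via Lemma~\ref{step3}. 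So your tail-control mechanism is not available at this point, and the $\vr^2$-sharp two-sided squeeze on $\mu_\vr$ you want to assume is likewise not yet proved.

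A second gap is the Nehari-manifold step: you project both $\bar\psi_\vr$ and $\bar\phi_\vr$ onto $\msn_\vr$ via Corollary~\ref{key corollary} and then assert that ``the min-max geometry of Lemma~\ref{reduction2}\dots forces the two corresponding points on $\msn_\vr$\dots to coincide up to $o(1)$''. Lemma~\ref{reduction2} gives uniqueness of the maximum of $t\mapsto I_\vr(tu)$ \emph{along a fixed ray}, not uniqueness of near-minimizers of $I_\vr$ on $\msn_\vr$; two distinct points on $\msn_\vr$ can certainly share the same energy level. The paper avoids this entirely: it sets $w_\vr=\bar\psi_\vr-\bar\phi_\vr$, first shows $|w_\vr|_{q,\vr}\to0$ for $q\in(2,m^*)$ by an energy-doubling contradiction (if not, a second bubble would force $\cl_\vr(\bar\psi_\vr)\geq 2\mu_0$), then subtracts the two approximate Euler--Lagrange identities $\cl_\vr'(\bar\psi_\vr)=0$ and $\|\cl_\vr'(\bar\phi_\vr)\|_\vr=O(\vr^2)$ tested against $w_\vr^+-w_\vr^-$ to obtain $\|w_\vr\|_\vr^2=\frac{1}{\vr^m}\Real\int_M f(|\bar\psi_\vr|)(\bar\psi_\vr,w_\vr^+-w_\vr^-)\,d\vol_\ig+o_\vr(1)$, and finally controls the right-hand side using the $L^q$ smallness just obtained together with the energy-based tail estimate \eqref{X3} (which comes from $\mu_\vr\to\mu_0$, not from any pointwise decay of $\bar\psi_\vr$). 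That is the missing idea: tail control at this stage must come from the energy identity, not from the exponential decay of the solution.
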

\begin{proof}
Setting $w_\vr=\bar\psi_\vr-\bar\phi_\vr$, we have $|w_\vr|_{q,\vr}\to0$ as
$\vr\to0$ for all $q\in(2,m^*)$ (otherwise, we can apply Lemma \ref{step1} and \ref{step2} for $\{w_\vr\}$ instead
of $\{\bar\psi_\vr\}$ to get $\cl_\vr(\bar\psi_\vr)\geq 2\mu_0$ which is absurd).

To proceed, let us go over the proof of Lemma \ref{step2} again to see that
\[
\mu_0=\lim_{\vr\to0}\frac1{\vr^m}\int_M\frac12f(|\bar\psi_\vr|)|\bar\psi_\vr|^2-F(|\bar\psi_\vr|)d\vol_\ig
=\int_{\R^m}\frac12f(|z_0|)|z_0|^2-F(|z_0|)dx
\]
Similar to \eqref{X0} (but here we need to transplant it on the manifold), it is easy to see that for every $\epsilon>0$ there is $R>0$ large such that
\begin{\equ}\label{X3}
\limsup_{\vr\to0}\int_{M\setminus B_{\vr R}(y_\vr)}\frac12f(|\bar\psi_\vr|)|\bar\psi_\vr|^2-F(|\bar\psi_\vr|)d\vol_\ig\leq\epsilon.
\end{\equ}
From the fact $\cl_\vr'(\bar\psi_\vr)=0$ and the estimate in Lemma \ref{estimate for L-derivative}, we have
\[
\inp{\bar\psi_\vr^+}{w_\vr^+}_\vr+\inp{\bar\psi_\vr^-}{w_\vr^-}_\vr
-\frac{\Real}{\vr^m}\int_M f(|\bar\psi_\vr|)(\bar\psi_\vr,w_\vr^+-w_\vr^-)d\vol_\ig=0
\]
and
\[
\inp{\bar\phi_\vr^+}{w_\vr^+}_\vr+\inp{\bar\phi_\vr^-}{w_\vr^-}_\vr
-\frac{\Real}{\vr^m}\int_M f(|\bar\phi_\vr|)(\bar\phi_\vr,w_\vr^+-w_\vr^-)d\vol_\ig=o_\vr(1).
\]
Hence, we get
\begin{\equ}\label{X4}
\|w_\vr\|_\vr^2=\frac{\Real}{\vr^m}\int_M f(|\bar\psi_\vr|)(\bar\psi_\vr,w_\vr^+-w_\vr^-)d\vol_\ig+o_\vr(1),
\end{\equ}
where we have used that $|w_\vr|_{q,\vr}\to0$ as
$\vr\to0$ for all $q\in(2,m^*)$. Recall that, by $(f_1)$-$(f_3)$, for arbitrarily small $\de>0$ there exists $c_\de>0$ such that $f(s)\leq \de+c_\de(f(s)s^2)^{\frac{p-1}{p}}$ and $F(s)\leq\frac1{\theta+2}f(s)s^2$ for all $s\geq0$. Thus, it follows from \eqref{X3}, \eqref{X4} and the Sobolev embeddings that
\[
\aligned
\|w_\vr\|_\vr^2
\leq\de C\|w_\vr\|_\vr+ C_\de \big(\epsilon+o_\vr(1)\big)^{\frac{p-1}{p}}\|w_\vr\|_\vr+o_\vr(1),
\endaligned
\]
for some constants $C, C_\de>0$.
Due to the arbitrariness of $\de,\epsilon>0$, one sees $\|w_\vr\|_\vr\to0$ as $\vr\to\infty$.
\end{proof}

\begin{Lem}\label{exponentially decay}
For small $\vr>0$, there exist positive constants $C,c>0$ such that
\[
    |\bar\psi_\vr(\xi)|\leq C\exp\Big( -\frac{\,c\,}{\vr}\dist(\xi,y_\vr) \Big),    \quad \text{for all $\xi\in M$.}
\]
\end{Lem}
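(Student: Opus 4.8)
The plan is to derive a differential inequality of the form $\De|\bar\psi_\vr|^2 \geq (c/\vr)^2|\bar\psi_\vr|^2$ outside a ball of radius $O(\vr)$ around $y_\vr$, and then compare with a suitable exponential barrier, exactly mirroring the Euclidean argument in the second part of Lemma \ref{exponential}. First I would work in a fixed coordinate chart via the exponential map (or the Bourguignon-Gauduchon trivialization) and rescale: set $z_\vr(x) = \bar\psi_\vr\circ\exp_{y_\vr}(\vr x)$, so that $z_\vr$ solves, on a ball of radius $\sim \textit{inj}(M)/\vr$, an equation of the form $\tilde D_{g_\vr} z_\vr + a\,\om_\C\cdot_{g_\vr} z_\vr = f(|z_\vr|)z_\vr$ with $g_\vr = (\exp_{y_\vr}^*\ig)(\vr\,\cdot)$ a metric converging to the Euclidean one in $C^\infty_{loc}$. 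From Lemma \ref{step3} (and the bound $\|\bar\psi_\vr\|_\vr$ uniformly bounded) together with elliptic bootstrap as in Lemma \ref{exponential}, $z_\vr$ is bounded in $W^{1,q}_{loc}$ for all $q$, hence $|z_\vr|_{L^\infty}\leq C$ uniformly in $\vr$; and $z_\vr\to z_0\in\cb$ which decays exponentially, so for any $\delta>0$ there is $R>0$ with $|z_\vr(x)|<\delta$ for $R\leq|x|\leq 2R$, uniformly in small $\vr$. Then I would iterate on dyadic annuli to get $|z_\vr(x)|<\delta$ for all $|x|\geq R$: the point is that once $f(|z_\vr|)^2 \leq a^2/2$ on $\{|x|\geq R_0\}$, the Kato-type inequality below forces exponential decay, which keeps $|z_\vr|$ small further out.

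Next I would establish the Kato inequality in the rescaled, curved setting. Apply $\tilde D_{g_\vr}$ to the equation; using $\tilde D_{g_\vr}^2 = \nabla^*\nabla + \tfrac14 Scal_{g_\vr}$ (the Lichnerowicz/Schrödinger-Lichnerowicz formula, already invoked in the Remark after Lemma \ref{embeding lemma}) and commuting $\om_\C$ past $\tilde D_{g_\vr}$ (harmless up to lower-order curvature terms), one obtains $\nabla^*\nabla z_\vr = (f(|z_\vr|)^2 - a^2)z_\vr + (\text{terms of order} \leq 1 \text{ in } z_\vr \text{ with coefficients that are } O(\vr) \text{ or controlled by } \nabla f(|z_\vr|)\text{)}$; the curvature contributions from $Scal_{g_\vr}$ are $O(\vr^2)$ since the chart is centered at $y_\vr$. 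Pairing with $z_\vr$ and using $\Real(\nabla f(|z_\vr|)\cdot_{g_\vr} z_\vr, z_\vr) = 0$ (a parity identity, as in Lemma \ref{exponential}) together with $\De_{g_\vr}|z_\vr|^2 = 2\Real(\nabla^*\nabla z_\vr, z_\vr)\cdot(-1) + 2|\nabla z_\vr|^2$ — keeping careful track of sign conventions — gives $\De_{g_\vr}|z_\vr|^2 \geq 2(a^2 - f(|z_\vr|)^2 - C\vr^2)|z_\vr|^2$ on $\{|x|\geq R_0\}$. Shrinking $\vr$ and enlarging $R_0$ using the smallness of $|z_\vr|$ there, we get $\De_{g_\vr}|z_\vr|^2 \geq a^2|z_\vr|^2$ on $\{|x|\geq R_0\}$ for all small $\vr$.

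Then I would run the comparison-principle argument. Take the barrier $\Ga(x) = e^{-\kappa|x|}$ with $\kappa$ slightly below $a$; since $g_\vr\to \ig_{\R^m}$ in $C^2_{loc}$ on the relevant annuli, $\De_{g_\vr}\Ga - a^2\Ga < 0$ for $R_0\leq|x|$ once $\vr$ is small (the Euclidean computation $\De e^{-\kappa|x|} = (\kappa^2 - \tfrac{m-1}{|x|}\kappa)e^{-\kappa|x|}$ gives the needed sign for $|x|\geq R_0$, and the $O(\vr)$ perturbation of the Laplacian does not spoil it). Choosing $C$ with $|z_\vr|^2\leq C\Ga$ on $\{|x| = R_0\}$ (possible by the uniform $L^\infty$ bound), the function $U_\vr = |z_\vr|^2 - C\Ga$ satisfies $\De_{g_\vr}U_\vr \geq a^2 U_\vr$ on $\{|x|\geq R_0\}$, is $\leq 0$ on the boundary $\{|x| = R_0\}$, and tends to $0$ at infinity; the maximum principle (Lemma \ref{exponential}'s comparison step) yields $U_\vr\leq 0$, i.e. $|z_\vr(x)|\leq C' e^{-\kappa|x|/2}$. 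Undoing the rescaling, $|\bar\psi_\vr\circ\exp_{y_\vr}(\vr x)| \lesssim e^{-\kappa|x|/2}$, that is $|\bar\psi_\vr(\xi)| \lesssim \exp(-\tfrac{c}{\vr}\dist(\xi,y_\vr))$ for $\dist(\xi,y_\vr)\leq \vr R_0$ trivially (bounded) and for larger distances by the barrier, with $c$ independent of $\vr$; here one must note that $\dist(\xi,y_\vr) = \vr|x|$ in normal coordinates so the scaling in the exponent is exactly as claimed. The main obstacle I anticipate is making the curvature and $O(\vr)$ error terms in the Kato inequality genuinely uniform across the whole exterior region (not just on a fixed compact annulus) while simultaneously using the not-yet-established decay of $|z_\vr|$ — this circularity is broken by the dyadic-annulus bootstrap, but the bookkeeping of which smallness ($\vr$ small vs. $R_0$ large) controls which term has to be done with some care.
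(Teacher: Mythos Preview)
Your overall strategy (Lichnerowicz--Kato inequality, then barrier comparison) matches the paper's, and the differential inequality you aim for,
\[
\vr^2\De_\ig|\bar\psi_\vr|^2 \geq a^2|\bar\psi_\vr|^2 \quad \text{on } M\setminus B_{\vr R_0}(y_\vr),
\]
is exactly what the paper derives. The gap is in the comparison step. You write that $U_\vr = |z_\vr|^2 - C\Ga$ ``tends to $0$ at infinity'' and invoke the maximum principle on $\{|x|\geq R_0\}$, but there is no infinity here: $M$ is compact, so the rescaled chart is the bounded ball $\{|x|<r/\vr\}$, and your comparison domain $\{R_0\leq|x|\leq r/\vr\}$ has an \emph{outer} boundary at $|x|=r/\vr$ on which you have no control. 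On that sphere the barrier is of size $e^{-\kappa r/\vr}$, exponentially small in $\vr$, so the boundary condition $|z_\vr|^2\leq C\Ga$ there is precisely the decay you are trying to prove. Your dyadic-annulus bootstrap gives at best geometric decay in the annulus index, i.e.\ polynomial decay in $|x|$, which cannot close this gap. And even if you succeeded inside the chart, you would still say nothing about $|\bar\psi_\vr|$ on $M\setminus B_r(y_\vr)$.

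The paper's resolution is to work on the manifold (not the rescaled chart) and replace $e^{-\kappa|x|}$ by a barrier built from a function $L_\vr(\rho)$ that equals $\rho$ for $\rho<r/\vr$, then smoothly levels off to a constant for $\rho\geq(\pi+1)r/\vr$. The resulting barrier $\bar\Ga_\vr$ is therefore \emph{eventually constant} and extends to a $C^2$ function on all of $M$; on the constant region $\vr^2\De_\ig\bar\Ga_\vr - a^2\bar\Ga_\vr = -a^2\bar\Ga_\vr<0$ is automatic. Now the comparison can be run on the compact set $M\setminus B_{\vr R_0}(y_\vr)$, which has only the \emph{inner} boundary $\partial B_{\vr R_0}(y_\vr)$, and the maximum principle applies cleanly. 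This is the missing idea in your proposal; the issue you flagged (uniformity of curvature error terms) is in fact harmless, since on compact $M$ one simply has $|\vr^2 Scal_\ig/4|=O(\vr^2)$ globally.
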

\begin{proof}
We first show that the family $\{\bar\psi_\vr\}$ decays uniformly on $M$ in the following sense: 
\begin{\equ}\label{decay1}
\frac{\dist(\xi,y_\vr)}{\vr}\to\infty \Longrightarrow|\bar\psi_\vr(\xi)|\to0, \quad \text{as } \vr\to0.
\end{\equ}

Indeed, since $\bar\psi_\vr$ solves \eqref{equ3} and $\cl_\vr(\bar\psi_\vr)=\mu_\vr\to\mu_0$, by the Sobolev embedding and bootstrap arguments, we deduce that $\{|\bar\psi_\vr|_\infty\}$ is bounded. Applying the operator $\vr\tilde D_\ig$ on both sides of \eqref{equ3}, and using the Lichnerowicz formula, we find
\[
\vr^2\De_\ig\bar\psi_\vr-\frac{\vr^2 Scal_\ig}4\bar\psi_\vr=(a^2-f(|\bar\psi_\vr|)^2)\bar\psi_\vr-\vr\nabla_\ig f(|\bar\psi_\vr|)\cdot_\ig\bar\psi_\vr \quad \text{on } M,
\]
where $\De_\ig=\div_\ig\nabla_\ig$ is the Laplace-Beltrami operator, $\nabla_\ig$ is the gradient and $Scal_\ig$ is the scalar curvature with respect to the metric $\ig$. Using the fact
\[
\De_\ig|\bar\psi_\vr|^2=2\Real(\De_\ig\bar\psi_\vr,\bar\psi_\vr)
+2|\nabla_\ig\bar\psi_\vr|^2,
\]
we have
\begin{\equ}\label{decay2}
\vr^2\De_\ig|\bar\psi_\vr|^2=2\Big( a^2+\frac{\vr^2 Scal_\ig}4- f(|\bar\psi_\vr|)^2\Big)|\bar\psi_\vr|^2+2|\nabla_\ig\bar\psi_\vr|^2\geq -C |\bar\psi_\vr|^2
\end{\equ}
for some $C>0$. Then, it follows from the local boundedness of sub-solutions (see for example \cite[Theorem 4.1]{HF}) that
\begin{\equ}\label{sub-solu esti}
|\bar\psi_\vr(\xi)|^2\leq \frac{C_0}{\vr^m}\int_{B_\vr(\xi)}|\bar\psi_\vr|^2d\vol_\ig
\end{\equ}
with $C_0>0$ independent of $\xi\in M$ and $\vr>0$.

Assume by contradiction that there exist $\de>0$ and $\xi_\vr\in M$ such that $\dist(\xi_\vr,y_\vr)/\vr\to\infty$ as $\vr\to0$ and $|\bar\psi_\vr(\xi_\vr)|\geq\de$. By \eqref{sub-solu esti} and Lemma \ref{step3}, we deduce, as $\vr\to0$
\[
\aligned
\de&\leq C_0\Big(\frac1{\vr^m}\int_{B_\vr(\xi_\vr)}|\bar\psi_\vr|^2d\vol_\ig\Big)^{\frac12} \leq C_0 |\bar\psi_\vr-\bar\phi_\vr|_{2,\vr}+C_0\Big(\frac1{\vr^m}\int_{B_\vr(\xi_\vr)}|\bar\phi_\vr|^2d\vol_\ig\Big)^{\frac12} \\
&\leq o_\vr(1)+C_0\Big(\int_{B_1\big(\frac{\exp_{y_\vr}^{-1}(\xi_\vr)}{\vr}\big)}|z_0|^2d\vol_\ig\Big)^{\frac12}\to0
\endaligned
\]
which is a contradiction; here $\bar\phi_\vr$ and $z_0$ are as in Lemma \ref{step3}. This proves \eqref{decay1}.

In order to see the exponential decay, by \eqref{decay1} and \eqref{decay2}, we can take $R_0>0$ sufficiently large so that
\begin{\equ}\label{decay3}
\vr^2\De_\ig|\bar\psi_\vr|^2\geq a^2 |\bar\psi_\vr|^2 \quad
\text{on } M\setminus B_{\vr R_0}(y_\vr)
\end{\equ}
for all $\vr>0$ small. Our next strategy is to use the comparison principle and is very similar to the flat case (see Lemma \ref{exponential}): we first construct a sequence of positive functions $\bar \Ga_\vr$ on $M\setminus B_{\vr R_0}(y_\vr)$ which decays exponentially,  then we chose $C>0$ such that $|\bar\psi_\vr(\xi)|^2\leq C\cdot \bar\Ga(\xi)$ for $\xi\in\partial B_{\vr R_0}(y_\vr)$ and finally, by setting $U_\vr=|\bar\psi_\vr|^2- C\cdot \bar\Ga$, we will show
\[
\vr^2 \De_\ig U_\vr \geq a^2 U_\vr, \quad \text{on } M\setminus B_{\vr R_0}(y_\vr).
\]

For the sake of clarity, let us consider the (local) normal coordinates $\exp_{y_\vr}^{-1}: B_{r}(y_\vr)\to \R^m\cong T_{y_\vr}M$ for a fixed $r<\textit{inj}(M)/5$ and set $v_\vr(x)=|\bar\psi_\vr|^2\circ\exp_{y_\vr}(\vr x)$. Then we find 
\[
\frac1{\sqrt{\text{det\,}\ig(\vr x)}}\sum_{j,k}\pa_j\big( \ig^{jk}(\vr x)\sqrt{\text{det\,}\ig(\vr x)}\pa_k v_\vr\big)-a^2 v_\vr\geq0
\quad \text{for } R_0\leq |x|\leq \frac{r}\vr,
\]
where $(\ig^{ij})_{ij}=G^{-1}$ is the inverse matrix of the metric $\ig$.

\smallskip

For each $\vr>0$ small, we take $\Ga_\vr(x)=e^{-\frac a2 L_\vr(|x|)}$, where $L_\vr:[0,\infty)\to[0,\infty)$ is a function defined as
\begin{\equ}\label{def L-vr}
L_\vr(\rho)=\left\{
\aligned
& \rho & \quad & 0\leq\rho<\frac r\vr ,\\[0.3em]
&\frac12\Big( \rho +\frac r\vr\sin\big(\frac\vr r\rho-1 \big)  \Big) +\frac r{2\vr} & \quad &
\frac r\vr\leq \rho < \frac{\pi+1}\vr r ,\\[0.3em]
&\frac{\pi+2}{2\vr}r & \quad & \rho\geq \frac{\pi+1}\vr r.
\endaligned
\right.
\end{\equ}
According to the above definition, $\Ga_\vr\in C^2(\R^m\setminus\{0\})$ is spherically symmetric, and so satisfies
\[
\Delta\Ga_\vr-\frac{a^2}4\Ga_\vr=-\frac a2 e^{-\frac a2L_\vr(\rho)} \Big( L_\vr''(\rho)-\frac a2 (L_\vr'(\rho))^2 + \frac{m-1}\rho L_\vr'(\rho) + \frac a2 \, \Big)
\quad \text{for } \rho=|x|>0.
\]
Now we see easily that
\[
\Delta\Ga_\vr-\frac{a^2}4\Ga_\vr=\left\{
\aligned
&-\frac{a(m-1)}{2\rho} e^{-\frac a2L_\vr(\rho)}
& \quad & 0\leq\rho<\frac r\vr ,\\[0.3em]
& -\frac {a^2}4 e^{-\frac a2L_\vr(\rho)} & \quad & \rho\geq \frac{\pi+1}\vr r.
\endaligned
\right.
\]
And for $\frac r\vr\leq \rho < \frac{\pi+1}\vr r$, from a direct computation, we have
\[
\aligned
&L_\vr''(\rho)-\frac a2 (L_\vr'(\rho))^2 + \frac{m-1}\rho L_\vr'(\rho) + \frac a2 \\
 &\qquad=
\frac a2-\frac a8 \Big( 1+\cos\big( \frac\vr r\rho-1 \big) \Big)^2-\frac\vr{2r}\sin\big( \frac\vr r\rho-1 \big)+\frac{m-1}{2\rho}\Big( 1+\cos\big( \frac\vr r\rho-1 \big) \Big) \\
&\qquad \geq
\frac a2-\frac a8 \Big( 1+\cos\big( \frac\vr r\rho-1 \big) \Big)^2-\frac\vr{2r}\sin\big( \frac\vr r\rho-1 \big)+\frac{\vr}{2(\pi+1)r}\Big( 1+\cos\big( \frac\vr r\rho-1 \big) \Big) \\
&\qquad =\frac a2-\frac a8 \Big( 1+\cos\big( \frac\vr r\rho-1 \big) \Big)^2 - \frac{\vr\sqrt{(\pi+1)^2+1}}{2(\pi+1)r}\sin\big( \frac\vr r\rho-1 -\vartheta\big)+\frac{\vr}{2(\pi+1)r}
\endaligned
\]
where $\vartheta=\arcsin\frac{1}{\sqrt{(\pi+1)^2+1}}\in(0,\frac\pi2)$. Therefore, for small $\vr>0$, we can derive that
\[
\Delta\Ga_\vr-\frac{a^2}4\Ga_\vr<0 \quad \text{on } \R^m\setminus\{0\}.
\]
Moreover, about the gradient of $\Ga_\vr$ we have 
\[
\frac{|\nabla\Ga_\vr(x)|}{\Ga_\vr(x)}\leq\frac a2
\quad \text{for all } |x|>0 \text{ and } \vr>0.
\]
Hence, by taking $r$ smaller if necessary, it follows from \eqref{metric development} that
\[
\aligned
\frac1{\sqrt{\text{det\,}\ig(\vr x)}}\sum_{j,k}\pa_j\big( \ig^{jk}(\vr x)\sqrt{\text{det\,}\ig(\vr x)}\pa_k \Ga_\vr\big)\leq\De\Ga_\vr+
o_r(1)\big(|\De\Ga_\vr|+|\nabla\Ga_\vr|\big)
\leq a^2\Ga_\vr
\endaligned
\]
for $R_0\leq |x|\leq {r}/\vr$.

Next, using \eqref{decay1}, we can fix $C_0>0$ such that $v_\vr(x)\leq  C_0\cdot\Ga_\vr(x)=C_0 e^{-\frac a2 R_0}$ for $|x|=R_0$ and $\vr$ small. Via the Riemannian normal coordinates, we can transplant the functions $\Ga_\vr$ onto $M\setminus B_{\vr R_0}(y_\vr)$ by introducing
\begin{\equ}\label{def bar Ga-vr}
\bar\Ga_\vr(\xi)=\left\{
\aligned
&\Ga_\vr\Big(\frac{\exp_{y_\vr}^{-1}(\xi)}{\vr} \Big)& \quad  & \xi\in B_{5r}(y_\vr)\setminus B_{\vr R_0}(y_\vr), \\[0.5em]
&e^{-\frac{a(\pi+2)}{4\vr}r } & \quad  & M\setminus B_{5r}(y_\vr).
\endaligned
\right.
\end{\equ}
Then, from \eqref{def L-vr}, we see that $\bar \Ga_\vr>0$ is a well-defined $C^2$-smooth function on $M\setminus B_{\vr R_0}(y_\vr)$, and in particular, 
\[
\vr^2\De_\ig\bar\Ga_\vr-a^2\bar\Ga_\vr\leq0
\quad \text{on } M\setminus B_{\vr R_0}(y_\vr).
\]
Setting $U_\vr=|\bar\psi_\vr|^2- C_0\cdot \bar\Ga_\vr(x)$, we can see from \eqref{decay3} that
\[
\vr^2\De_\ig U_\vr-a^2 U_\vr\geq0 \quad 
\text{on } M\setminus B_{\vr R_0}(y_\vr).
\]
And it is standard to show from the comparison principle that $U_\vr\leq 0$ on $M\setminus B_{\vr R_0}(y_\vr)$. 

Turning back to the definition \eqref{def bar Ga-vr}, we see that
\[
\bar\Ga_\vr(\xi)\leq \exp\Big(-\frac{c}\vr\dist(\xi,y_\vr)\Big)
\quad \text{for } \xi\in B_r(y_\vr).
\]
and
\[
\bar\Ga_\vr(\xi)\leq \exp\Big(-\frac{c\cdot r}\vr\Big)
\quad \text{for } \xi\in M\setminus B_r(y_\vr).
\]
Setting $d_M=\sup\{\dist(\xi_1,\xi_2):\, \xi_1,\xi_2\in M\}$, we see easily that $d_M\geq\textit{inj}(M)$ and so
\[
\bar\Ga_\vr(\xi)\leq \exp\Big(-\frac{c\cdot r}{\vr\cdot d_M}\dist(\xi,y_\vr)\Big)
\quad \text{for all } \xi\in M.
\]
This implies the exponential decay for $|\bar\psi_\vr|$ by simply taking the square root.
\end{proof}

Recall that in the proof of Lemma \ref{step2}, we have taken $\bt_\vr\in C^\infty(M,[0,1])$ be such that $\bt_\vr\equiv1$ on $B_r(y_\vr)$ and $\supp \bt_\vr\subset B_{2r}(y_\vr)$ for some $r<\textit{inj}(M)$. Via the Bourguignon-Gauduchon trivialization between the spinor bundles $\mbs(B_r(y_\vr))\to \mbs(B_r(0))$ and the rescaling $x\mapsto \frac{x}\vr$ on $\R^m$, the spinor field $\bt_\vr \bar\psi_\vr$ corresponds to a spinor field $z_\vr$ on $B_{2r/\vr}(0)\subset\R^m$. And, by Lemma \ref{step3} and bootstrap arguments, $z_\vr$ converges in $W^{1,q}(\R^m,\tilde\mbs(\R^m))$ to some $z_0\in\cb$ as $\vr\to0$, for $q\geq2$. Thanks to the fast decay rate of $\bar\psi_\vr$, in addition to Lemma \ref{step2}, we have the following refined lower bound estimate for the critical level $\mu_\vr$.

\begin{Lem}\label{estimate mu-vr lower}
Let $y_\vr$ be a maximum point of $|\bar\psi_\vr|$. Up to a subsequence if necessary, assume $y_\vr\to y_0\in M$ as $\vr\to0$ with respect to the Riemannian metric $\ig$. Then
\[
\mu_\vr\geq \mu_0-\vr^2\Theta(y_0,z_0)+o(\vr^2).
\]
\end{Lem}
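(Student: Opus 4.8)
The plan is to exploit that $\bar\psi_\vr$ is an \emph{exact} critical point of $\cl_\vr$, which by \eqref{PS1} gives
\[
\mu_\vr=\cl_\vr(\bar\psi_\vr)-\tfrac12\cl_\vr'(\bar\psi_\vr)[\bar\psi_\vr]=\frac1{\vr^m}\int_M G(|\bar\psi_\vr|)\,d\vol_\ig,\qquad G(s):=\tfrac12 f(s)s^2-F(s)\ge0,
\]
an expression in which only $|\bar\psi_\vr|$ enters. By the exponential decay of $|\bar\psi_\vr|$ and (via interior elliptic estimates for \eqref{equ3}) of $|\nabla_\ig\bar\psi_\vr|$ from Lemma \ref{exponentially decay}, the contribution of $M\setminus B_r(y_\vr)$ is $O(e^{-c/\vr})=o(\vr^2)$ for any fixed $r<\textit{inj}(M)$. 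On $B_r(y_\vr)$ I change variables $\xi=\exp_{y_\vr}(\vr x)$ and pass to the Bourguignon--Gauduchon trivialization; since it is a fiberwise isometry, $|\bar\psi_\vr|$ becomes $|z_\vr|$ and $d\vol_\ig=\sqrt{\text{det}\,\ig}(\vr x)\,\vr^m dx$, where $z_\vr$ is the rescaled transplanted spinor — which by Lemma \ref{step3} and bootstrap converges in $W^{1,q}(\R^m,\tilde\mbs(\R^m))$, $q\ge2$, to $z_0\in\cb$, and which — transplanting the estimate of Lemma \ref{exponentially decay} and applying interior elliptic estimates — decays exponentially, uniformly in $\vr$, together with $\nabla z_\vr$. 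Hence $\mu_\vr=\int_{B_{r/\vr}(0)}G(|z_\vr|)\,\sqrt{\text{det}\,\ig}(\vr x)\,dx+o(\vr^2)$.

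Next I insert the expansion $\sqrt{\text{det}\,\ig}(\vr x)=1-\tfrac{\vr^2}6\text{Ric}_{y_\vr}(x,x)+O(\vr^3|x|^3)$ from \eqref{det-g development}; the uniform exponential decay of $z_\vr$ makes the $O(\vr^3|x|^3)$ piece $O(\vr^3)$, lets me extend the integrals to all of $\R^m$ up to $O(e^{-c/\vr})$, and lets me pass to the limit $z_\vr\to z_0$, $y_\vr\to y_0$ in the order-$\vr^2$ term. This yields
\[
\mu_\vr=\int_{\R^m}G(|z_\vr|)\,dx-\frac{\vr^2}6\int_{\R^m}\text{Ric}_{y_0}(x,x)\,G(|z_0|)\,dx+o(\vr^2),
\]
so it remains to recover the second term of $\Theta(y_0,z_0)$, i.e. to prove
\[
\int_{\R^m}G(|z_\vr|)\,dx\ \ge\ \mu_0-\frac{\vr^2}{12}\sum_{i,j}\Real\int_{\R^m}\rr_{y_0}(e_i,x,x,e_j)(\nabla_{\pa_j}z_0,\pa_i\cdot_{\ig_{\R^m}}z_0)\,dx+o(\vr^2).
\]

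This inequality is the crux, and this is where the Dirac structure is felt. I would use the algebraic identity $\int_{\R^m}G(|z_\vr|)\,dx=\Phi(z_\vr)-\tfrac12\Phi'(z_\vr)[z_\vr]$. For the first term, $\{z_\vr\}$ (extended by $0$) is bounded in $\ce$ with $\Phi(z_\vr)\to\Phi(z_0)=\mu_0$ and $\Phi'(z_\vr)\to0$, so Lemma \ref{reduction limit equ}\,(3)--(4) gives $\Phi(z_\vr)\ge\max_{t>0}J(tz_\vr^+)-O(\|\Phi'(z_\vr)\|^2)\ge\mu_0-O(\|\Phi'(z_\vr)\|^2)$. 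For the second term I transplant the equation for $\bar\psi_\vr$ via \eqref{cut-off spinor identity} and the rescaling $x\mapsto\vr x$, getting, up to exponentially small terms,
\[
\tilde D_{\ig_{\R^m}}z_\vr+a\om_\C\cdot_{\ig_{\R^m}}z_\vr-f(|z_\vr|)z_\vr=\mathcal R_\vr,\qquad \mathcal R_\vr=-\vr W\cdot_{\ig_{\R^m}}z_\vr-\vr X\cdot_{\ig_{\R^m}}z_\vr-\sum_{i,j}(b_{ij}-\de_{ij})\,\pa_i\cdot_{\ig_{\R^m}}\nabla_{\pa_j}z_\vr ,
\]
with $W,X,(b_{ij})$ evaluated along $\exp_{y_\vr}(\vr\,\cdot)$. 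By \eqref{WX}, $W=O(|\vr x|^3)$, $X=O(|\vr x|)$ and $b_{ij}-\de_{ij}=\tfrac{\vr^2}6\rr_{y_\vr}(e_i,x,x,e_j)+O(\vr^3|x|^3)$, so together with the uniform decay of $z_\vr,\nabla z_\vr$ one gets $\|\mathcal R_\vr\|_{L^2}=O(\vr^2)$, hence $\|\Phi'(z_\vr)\|=O(\vr^2)$ and $O(\|\Phi'(z_\vr)\|^2)=o(\vr^2)$. Finally, $\Phi'(z_\vr)[z_\vr]=\Real\int_{\R^m}(\mathcal R_\vr,z_\vr)\,dx$: the $W$-term is $O(\vr^4)$; the $X$-term vanishes because $X$ is a real vector field, so $X\cdot_{\ig_{\R^m}}$ is skew-adjoint and $(X\cdot_{\ig_{\R^m}}z_\vr,z_\vr)\in i\R$; and in the remaining term I use $(\pa_i\cdot_{\ig_{\R^m}}\nabla_{\pa_j}z_\vr,z_\vr)=-(\nabla_{\pa_j}z_\vr,\pa_i\cdot_{\ig_{\R^m}}z_\vr)$ and then let $\vr\to0$, obtaining
\[
\Phi'(z_\vr)[z_\vr]=\frac{\vr^2}6\sum_{i,j}\Real\int_{\R^m}\rr_{y_0}(e_i,x,x,e_j)(\nabla_{\pa_j}z_0,\pa_i\cdot_{\ig_{\R^m}}z_0)\,dx+o(\vr^2).
\]
Combining the three estimates yields the required lower bound for $\int_{\R^m}G(|z_\vr|)\,dx$, and substituting it into the expansion of $\mu_\vr$ gives $\mu_\vr\ge\mu_0-\vr^2\Theta(y_0,z_0)+o(\vr^2)$, which refines Lemma \ref{step2}.

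The main obstacle is this last step: producing the transplanted equation with the correct power of $\vr$ attached to each Bourguignon--Gauduchon correction, and then evaluating $\Phi'(z_\vr)[z_\vr]$ to precision $o(\vr^2)$ rather than merely $O(\vr^2)$. This needs the skew-adjointness identities (to kill the $O(\vr^2)$-sized $X$-contribution and to put the curvature term in the form appearing in $\Theta$) and the uniform exponential decay of $z_\vr$ and $\nabla z_\vr$ (to pass to the limit in the $|x|^2$- and $|x|^3$-weighted integrals and to absorb all tails into $o(\vr^2)$).
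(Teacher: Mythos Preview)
Your argument is correct and uses the same essential ingredients as the paper: the Bourguignon--Gauduchon trivialization and rescaling, the expansion of the metric and volume form, the transplanted equation for $z_\vr$ with the $W$-, $X$- and $(b_{ij}-\de_{ij})$-corrections, and Lemma~\ref{reduction limit equ}\,(3)--(4) to get $\Phi(z_\vr)\ge\mu_0-O(\|\Phi'(z_\vr)\|^2)$.

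The only organizational difference is at the very start. The paper first proves the one-line monotonicity bound $\cl_\vr(\bt_\vr\bar\psi_\vr)\le\cl_\vr(\bar\psi_\vr)=\mu_\vr$ (using that $t\mapsto\tfrac{t^2}{2}f(s)s^2-F(ts)$ is nondecreasing on $[0,1]$) and then expands $\cl_\vr(\bt_\vr\bar\psi_\vr)$ directly, so that the curvature terms arise from expanding the full quadratic form $\tfrac12\int(A_\vr\cdot,\cdot)$ and the volume element simultaneously. You instead begin from the exact critical-point identity $\mu_\vr=\vr^{-m}\int_M G(|\bar\psi_\vr|)\,d\vol_\ig$, expand only the volume factor, and recover the Dirac-type curvature contribution through the algebraic splitting $\int_{\R^m}G(|z_\vr|)=\Phi(z_\vr)-\tfrac12\Phi'(z_\vr)[z_\vr]$, evaluating $\Phi'(z_\vr)[z_\vr]$ via the transplanted equation. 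Your version avoids the monotonicity step and separates the ``volume'' and ``Dirac'' corrections cleanly; the paper's version keeps everything in one expansion. Either way the same estimates (skew-adjointness of Clifford multiplication to kill the $X$-term, exponential decay to control the $|x|^k$-weighted integrals, and $\|\Phi'(z_\vr)\|=O(\vr^2)$) are what make the $o(\vr^2)$ precision work.
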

\begin{proof}
Notice that $\cl_\vr'(\bar\psi_\vr)=0$ and
\[
\aligned
\cl_\vr(\bar\psi_\vr)-\cl_\vr(\bt_\vr\bar\psi_\vr)&=
\frac1{\vr^m}\int_M\frac12f(|\bar\psi_\vr|)|\bar\psi_\vr|^2-F(|\bar\psi_\vr|)d\vol_\ig \\
&\qquad -\frac1{\vr^m}\int_M\frac{\bt_\vr^2}2f(|\bar\psi_\vr|)|\bar\psi_\vr|^2-F(|\bt_\vr\bar\psi_\vr|)d\vol_\ig.
\endaligned
\]
By $(f_1)$-$(f_2)$, for each fixed $s\geq0$, we deduce that the function $t\mapsto \frac{t^2}2f(s)s^2-F(ts)$ is non-decreasing for $t\in[0,1]$. Hence, by $\bt_\vr(\xi)\in[0,1]$ for all $\xi\in M$, one sees easily
\[
\cl_\vr(\bt_\vr\bar\psi_\vr)\leq\cl_\vr(\bar\psi_\vr)=\mu_\vr.
\]

On the other hand, since $\bt_\vr \bar\psi_\vr$ corresponds to a spinor field $z_\vr$ on $B_{2r/\vr}(0)\subset\R^m$ through the Bourguignon-Gauduchon trivialization and rescaling, by developing the relationship in \eqref{cut-off spinor identity} for $\bt_\vr \bar\psi_\vr$ and $z_\vr$, we obtain the following correspondence of spinors
\[
\vr \bar D(\bt_\vr\bar\psi_\vr)\longleftrightarrow\ov{D z_\vr}+\vr^3 \ov{W\cdot_{\ig_{\R^m}}z_\vr}+\vr \ov{X\cdot_{\ig_{\R^m}}z_\vr}
+\vr^2\sum_{i,j}(b_{ij}-\de_{ij})\ov{\pa_i\cdot_{\ig_{\R^m}}\nabla_{\pa_j}z_\vr}.
\]
Now, we can argue similarly to the proof of Lemma \ref{estimate for L-derivative} and \ref{estimate for L} to obtain $\Phi'(z_\vr)=o(
\vr)$ and
\[
\aligned
\cl_\vr(\bt_\vr\bar\psi_\vr)&=\frac1{\vr^m}\int_M\frac12(\vr\bar D(\bt_\vr\bar\psi_\vr),\bt_\vr\bar\psi_\vr)+\frac{a}2(\om_\C\cdot_\ig\bt_\vr\bar\psi_\vr,\bt_\vr\bar\psi_\vr)
-F(|\bt_\vr\bar\psi_\vr|) d\vol_\ig \\
&=\int_{\R^m}\frac12( Dz_\vr,z_\vr)+\frac{a}2(\om_\C\cdot_{\ig_{\R^m}}z_\vr,z_\vr)
-F(|z_\vr|) dx  \\
&\qquad -\frac{\vr^2}{12}\int_{\R^m}\text{Ric}_{y_\vr}(x,x)(Dz_\vr,z_\vr)dx
-\frac{a\,\vr^2}{12}\int_{\R^m}\text{Ric}_{y_\vr}(x,x)(\om_\C\cdot_{\ig_{\R^m}}z_\vr,z_\vr)dx\\
&\qquad
+\frac{\vr^2}6\int_{\R^m}\text{Ric}_{y_\vr}(x,x)F(|z_\vr|) dx\\
&\qquad
-\frac{\vr^2}{12}\sum_{i,j}\Real\int_{\R^m}\rr_{y_\vr}(e_i,x,x,e_j)(\nabla_{\pa_j}z_\vr,\pa_i\cdot_{\ig_{\R^m}}z_\vr)dx
+o(\vr^2)
\endaligned
\]
Since $y_\vr\to y_0$ and  $z_\vr\to z_0$ in $W^{1,q}(\R^m,\tilde\mbs(\R^m))$ as $\vr\to0$, for $q\geq2$, and since $|z_\vr|$ decays exponentially, we have
\[
\aligned
&\int_{\R^m}\text{Ric}_{y_\vr}(x,x)\big[(Dz_\vr,z_\vr)+(a\om_\C\cdot_{\ig_{\R^m}}z_\vr,z_\vr)-2F(|z_\vr|) \big] dx \\
&\qquad =\int_{\R^m}\text{Ric}_{y_0}(x,x)\big[(Dz_0,z_0)+(a\om_\C\cdot_{\ig_{\R^m}}z_0,z_0)-2F(|z_0|) \big] dx+o_\vr(1)\\
&\qquad
=\int_{\R^m}\text{Ric}_{y_0}(x,x)\big[f(|z_0|)|z_0|^2-2F(|z_0|) \big] dx+o_\vr(1)
\endaligned
\]
and
\[
\aligned
&\sum_{i,j}\Real\int_{\R^m}\rr_{y_\vr}(e_i,x,x,e_j)(\nabla_{\pa_j}z_\vr,\pa_i\cdot_{\ig_{\R^m}}z_\vr)dx\\
&\qquad
=\sum_{i,j}\Real\int_{\R^m}\rr_{y_0}(e_i,x,x,e_j)(\nabla_{\pa_j}z_0,\pa_i\cdot_{\ig_{\R^m}}z_0)dx+o_\vr(1)
\endaligned
\]
Note that, by Lemma \ref{reduction limit equ} $(4)$, we also have
\[
\mu_0=\inf_{u\in \ce^+\setminus\{0\}}\max_{t>0}J(tu)\leq\Phi(z_\vr)+O(\|\Phi'(z_\vr)\|^2).
\]
Hence, it follows directly that
\[
\mu_\vr\geq \cl_\vr(\bt_\vr\bar\psi_\vr)\geq\mu_0-\vr^2\Theta(y_0,z_0)+o(\vr^2).
\]

\end{proof}

\begin{proof}[Proof of the main theorem]
We first see from Corollary \ref{key corollary}, Lemma \ref{estimate for L-derivative} and \ref{estimate for L} that
\[
\mu_\vr=\cl_\vr(\bar\psi_\vr)\leq \mu_0-\vr^2\max_{(y,\psi)\in M\times\cb}\Theta(y,\psi)+o(\vr^2) 
\]
for small $\vr>0$. Hence, by Lemma \ref{estimate mu-vr lower} and taking the limit $\vr\to0$, we have
\[
\Theta(y_0,z_0)\geq\max_{(y,\psi)\in M\times\cb}\Theta(y,\psi). 
\]
Therefore, we conclude that $(y_\vr,z_\vr)\to (y_0,z_0)$ in $M\times\cb$ such that
\[
\lim_{\vr\to0}\Theta(y_\vr,z_\vr)=\max_{(y,\psi)\in M\times\cb}\Theta(y,\psi), 
\]
and
\[
\cl_\vr(\bar\psi_\vr)=\mu_0-\vr^2\max_{(y,\psi)\in M\times\cb}\Theta(y,\psi)+o(\vr^2). 
\]

In the $2$-dimensional case, the Ricci tensor determines the whole curvature. Specifically, we have
\[
\text{Ric}(x,x)=\rr(e_1,x,x,e_1)+\rr(e_2,x,x,e_2)=\rr(e_1,e_2,e_2,e_1)|x|^2
\]
and
\[
Scal_\ig=\sum_{j=1}^2\sum_{i=1}^2\rr(e_j,e_i,e_i,e_j)=2\rr(e_1,e_2,e_2,e_1).
\]
Hence, by noting that the scalar curvature is twice the Gaussian curvature for surfaces, we have
\[
\aligned
\Theta(y,\psi)&=\frac{K_\ig(y)}{6}\int_{\R^2}\Big(\frac12f(|\psi|)|\psi|^2-F(|\psi|)\Big)|x|^2dx \\
&\qquad +\frac{K_\ig(y)}{12}\Real\int_{\R^2}\big( (x_2\nabla_{\pa_1}-x_1\nabla_{\pa_2})\psi, (x_2\pa_1-x_1\pa_2)\cdot_{\ig_{\R^2}}\psi \big) dx,
\endaligned
\]
where $K_\ig$ denotes the Gaussian curvature of $(M,\ig)$.

Finally, by substituting $f(|\psi|)|\psi|^2=|\psi|^{n^*}$ and $F(|\psi|)=\frac1{n^*}|\psi|^{n^*}$ into the above formulas, one completes the proof.
\end{proof}

\textit{\textbf{Acknowledgements}}. The authors wish to express their gratitude to the anonymous reviewer for his/her positive comments and constructive suggestions.

\vspace{2mm}
{\sc Thomas Bartsch\\
 Mathematisches Institut, Universit\"at Giessen\\
 35392 Giessen, Germany}\\
 Thomas.Bartsch@math.uni-giessen.de\\

{\sc Tian Xu\\
 Center for Applied Mathematics, Tianjin University\\
 Tianjin, 300072, China}\\
 xutian@amss.ac.cn

\end{document}